\theoremstyle{plain}
\newtheorem{theorem}{Theorem} 
\newtheorem{proposition}{Proposition}
\theoremstyle{plain}
\newtheorem{thm}{Theorem}
\newtheorem{lem}[thm]{Lemma}
\newtheorem{coro}[thm]{Corollary}
\newtheorem{prop}[thm]{Proposition}
\theoremstyle{definition}
\newtheorem{defi}[thm]{Definition}
\newcommand{\R}{\mathbb{R}}
\newcommand{\Z}{\mathbb{Z}}
\newcommand{\N}{\mathbb{N}}
\renewcommand{\H}{\mathbb{H}}
\newcommand{\C}{\mathbb{C}}
\newcommand{\abs}[1]{\left|#1\right|}
\newcommand*{\rom}[1]{\romannumeral}
\renewcommand{\Re}{{Re}}
\DeclareMathOperator{\fl}{flat}
\DeclareMathOperator{\GL}{GL}
\DeclareMathOperator{\SO}{SO}
\DeclareMathOperator{\Spin}{Spin}
\DeclareMathOperator{\Tr}{Tr}
\DeclareMathOperator{\tr}{tr}
\DeclareMathOperator{\Vol}{Vol}
\DeclareMathOperator{\rank}{rank}
\DeclareMathOperator{\Id}{Id}
\DeclareMathOperator{\Ad}{Ad}
\DeclareMathOperator{\ad}{ad}
\DeclareMathOperator{\T}{T}
\DeclareMathOperator{\Norm}{Norm}
\DeclareMathOperator{\cent}{Centr}
\DeclareMathOperator{\spec}{spec}
\DeclareMathOperator{\prim}{prime}
\DeclareMathOperator{\Ind}{Ind}
\DeclareMathOperator{\Cl}{Cl}
\DeclareMathOperator{\AS}{AS}
\DeclareMathOperator{\gr}{gr}
\numberwithin{equation}{section}
\numberwithin{thm}{section}
\providecommand{\keywords}[1]{{\noindent\small\textbf{Keywords} --- #1.}}
\renewcommand{\thethm}{%
\ifnum\value{subsection}=0
\thesection
\else
\thesubsection
	\fi
	.\arabic{thm}%
}
\providecommand{\contact}{{
	\bigskip
	\small

	\noindent
	\textsc{P.~Spilioti}, 
	Department of Mathematics, Ny Munkegade 118,  8000, Aarhus C,
     Denmark
	\par\noindent\nopagebreak
	\textit{E-mail address:} \href{mailto:spilioti@math.au.dk}
	{\texttt{spilioti@math.au.dk}}
}}
\begin{document}

\title{Functional equations of Selberg and Ruelle zeta functions for non-unitary twists}
\author{Polyxeni Spilioti}

\maketitle
\begin{abstract}
We consider the dynamical zeta functions of Selberg and Ruelle
associated with the geodesic flow on a compact odd-dimensional hyperbolic manifold.
These dynamical zeta functions are defined for a complex variable $s$ in some right-half plane of $\C$.
In \cite{Spilioti2018}, it was proved  that they admit a meromorphic
continuation to the whole complex plane. In this paper, we establish functional equations for them, relating their values at $s$ with those
at $-s$. We prove also a determinant representation of the zeta functions, using the regularized determinants of certain twisted differential operators.
\end{abstract}

\keywords{Dynamical zeta functions \and Functional equations \and Eta invariant \and Determinant formula}

\section{Introduction}
\label{intro}

We consider the twisted Selberg and Ruelle zeta functions of compact hyperbolic
manifolds $X$ of odd dimension $d$.
Let $G,K$ be either $G=\SO^{0}(d,1)$, $K=\SO(d)$ or $G=\Spin(d,1)$, $K=\Spin(d)$. Then, $K$ is a maximal compact subgroup of $G$.
Let $\widetilde{X}:=G/K$. $\widetilde{X}$ can be equipped with a $G$-invariant metric, which is unique up to scaling 
and is of constant negative curvature.
If we normalize this metric such that it has constant negative curvature $-1$, then
$\widetilde{X}$ equipped with this metric is isometric to the $d$-dimensional hyperbolic space $\H^{d}$.
Let $\Gamma$ be a discrete, torsion-free subgroup of $G$ such that $\Gamma\backslash G$ is compact. 
This means that $\Gamma$ has no elements of finite order.
$\Gamma$ acts by isometries on $\widetilde X$ and $X=\Gamma\backslash \widetilde X$ is a
compact oriented hyperbolic manifold of dimension $d$. 
This is a case of a locally symmetric space of non-compact type of real rank 1. This means that 
in the Iwasawa decomposition $G=KAN$ of $G$, $A$ is a multiplicative torus of dimension 1, i.e., $A\cong\R^+$. 

For a given $\gamma\in\Gamma$, we denote by $[\gamma]$ the $\Gamma$-conjugacy class
of $\gamma$. If $\gamma\neq e$, then there is a unique closed geodesic 
$c_{\gamma}$ associated with $[\gamma]$. We denote by $l(\gamma)$ the length of
$c_{\gamma}$.
The conjugacy class $[\gamma]$ is called prime if 
there exist no $k\in\N$ with $k>1$ and $\gamma_{0}\in\Gamma$ such that $\gamma=\gamma_{0}^{k}$.
The prime closed geodesics correspond to the prime conjugacy classes.
These geodesics trace out their image exactly once. 
Let $M:=\cent_{K}(A)$ be the centralizer of $A$ in $K$. 
Since $\Gamma$ is a cocompact subgroup of $G$, every element $\gamma\in \Gamma-\{e\}$ is  hyperbolic.
By \cite[Lemma 6.5]{Wa}, there exist a $g\in G$, a $m_{\gamma} \in M$, and an 
$a_{\gamma} \in A$, such that $ g^{-1}\gamma g=m_{\gamma}a_{\gamma}$.
The element $m_{\gamma}$ is determined up to conjugacy in $M$, and the element $a_{\gamma}$ 
is uniquely determined by $\gamma$.

Let $\mathfrak{g},\mathfrak{a}$ be the Lie algebras of $G$ and $A$, respectively.
Let $\Delta^{+}(\mathfrak{g},\mathfrak{a})$ be the set of positive roots of $(\mathfrak{g},\mathfrak{a})$.
Then, $\Delta^{+}(\mathfrak{g},\mathfrak{a})$ consists of a single root 
$\alpha$. Let $\mathfrak{g}_{\alpha}$ be the corresponding root space.
Let $\overline{\mathfrak{n}}$ be the negative root space of $(\mathfrak{g},\mathfrak{a})$.
Let $S^k(\Ad(m_\gamma a_\gamma)|_{\overline{\mathfrak{n}}})$ be the $k$-th
symmetric power of the adjoint map $\Ad(m_\gamma a_\gamma)$ restricted to $\overline{\mathfrak{n}}$
and $\rho$ be defined as $\rho:=\frac{1}{2}\dim(\mathfrak{g}_\alpha)\alpha$. 
We define the twisted zeta functions associated
with unitary irreducible representations $\sigma$ of $M$ and 
finite-dimensional representations $\chi$ of $\Gamma$.
The twisted Selberg zeta function $Z(s;\sigma,\chi)$ is defined for $s\in\C$ by the infinite product
\begin{equation*}
Z(s;\sigma,\chi):=\prod_{\substack{[\gamma]\neq e,\\ [\gamma]\prim}} \prod_{k=0}^{\infty}\det\Big(\Id-\big(\chi(\gamma)
\otimes\sigma(m_\gamma)\otimes S^k(\Ad(m_\gamma a_\gamma)|_{\overline{\mathfrak{n}}})\big)e^{-(s+|\rho|)l(\gamma)}\Big).
\end{equation*}
$Z(s;\sigma,\chi)$ converges absolutely and uniformly on compact subsets of some half-plane of $\C$ (\cite[Proposition 3.4]{Spilioti2018}).
The twisted Ruelle zeta function $R(s;\sigma,\chi)$ is defined for $s\in\C$ by the infinite product
\begin{equation*}
 R(s;\sigma,\chi):=\prod_{\substack{[\gamma]\neq{e}\\ [\gamma]\prim}}\det(\Id-(\chi(\gamma)\otimes\sigma(m_{\gamma}))e^{-sl(\gamma)}).
\end{equation*}
$R(s;\sigma,\chi)$ converges absolutely and uniformly on compact subsets of some half-plane of $\C$ 
(\cite[Proposition 3.5]{Spilioti2018}).

The connection of the Ruelle zeta function to spectral invariants such as the analytic 
torsion has been studied for example by 
Fried (\cite{Fried}), Bunke and Olbrich (\cite{BO}), Wotzke (\cite{Wo}), M\"{u}ller (\cite{M2})
for compact hyperbolic manifolds, under certain assumptions on the representation of the fundamental group of the manifold.
For the case of a hyperbolic manifold of finite volume, we refer the reader to the work of 
Park (\cite{Park}) 
and Pfaff (\cite{pfaff2}, \cite{pfaff1}).
A far more advanced study of the dynamical zeta functions of locally symmetric manifolds of higher rank is 
due to Moscovici and Stanton in \cite{MS91}, Deitmar in \cite{Deit1}, Schen in \cite{Shen2018} and
Moscovici, Stanton and Frahm in \cite{MSF}.
Fedosova and Pohl in \cite{Fedosova2020}, studied the Selberg zeta function on hyperbolic surfaces
for geometrically finite, non-elementary Fuchsian groups $\Gamma$ and finite-dimensional representations
with non-expanding cusp monodromy. They use transfer operators techniques
to prove the meromorphic continuation of the zeta function.

For hyperbolic manifolds of odd dimension, Fried
in \cite{Fried} considered 
an orthogonal representation $\varrho\colon\Gamma\rightarrow \rm O(m)$ of $\Gamma$.
Using the Selberg trace formula for the heat operator $e^{-t\Delta_{j}}$, 
where $\Delta_{j}$ is the Hodge Laplacian  on $j$-forms on $X$
with values in the flat vector bundle  $E_{\varrho}$ associated with $\varrho$,
he proved the meromorphic continuation of the zeta functions to the whole complex plane $\C$, as well as functional equations 
for the Selberg zeta function (\cite[p. 531-532]{Fried}). Under the assumption that
$\varrho$ is acyclic, i.e.,  the cohomology with coefficients in the local system 
defined by $\rho$ vanish for all $j$,
the Ruelle zeta function, defined for Re$(s)>d-1$ by 
\begin{equation*}
 R(s;\varrho):=\prod_{\substack{[\gamma]\neq e,\\ [\gamma]\prim}}{\det(\Id-\varrho(\gamma)e^{-sl(\gamma)}}),
\end{equation*}
admits a meromorphic extension to $\C$.
In addition, it is regular at $s=0$ and
\begin{equation*}
\abs{{R(0;\varrho)}}=T_X(\varrho)^2,
\end{equation*}
where $T_X(\varrho)$ is the Ray-Singer analytic torsion defined as in \cite{RS}.

For unitary representations $\chi$, the dynamical zeta functions have been studied by Bunke and Olbrich in 
\cite{BO} for all locally symmetric spaces of real rank 1. They proved that the zeta functions admit a meromorphic
continuation to the whole complex plane and they satisfy functional equations.

Wotzke in \cite{Wo} extended this result for representations of $\Gamma$, which 
are not necessary unitary, but very special ones.
In particular, he considered a compact 
odd-dimensional hyperbolic manifold and a finite-dimensional irreducible representation 
$\tau\colon G\rightarrow \GL(V)$ of $G$, such that $\tau\neq\tau_{\theta}$, where 
$\tau_{\theta}=\tau\circ\theta$ and
$\theta$ denotes the Cartan involution of $G$.
Under these assumptions, he proved that the Ruelle zeta function 
admits a meromorphic continuation to the whole 
complex plane. In addition, it is regular at $s=0$ and
\begin{equation*}
 \abs{{R(0;\tau)}}=T_X(\tau)^2.
\end{equation*}
Wotzke's method is based on the fact that if one considers the restrictions $\tau|_{K}$ and $\tau|_{\Gamma}$ of $\tau$
to $K$ and $\Gamma$, respectively, there is an isomorphism 
between the locally homogeneous vector bundle $E_{\tau}$ over $X$ 
associated with $\tau|_{K}$  and the flat vector bundle $E_{\fl}$ over $X$
associated with $\tau|_{\Gamma}$.
By \cite[Lemma 3.1]{MM}, a Hermitian 
fiber metric in $E_{\tau}$ descends to a fiber metric in $E_{\fl}$. Therefore, all tools from harmonic analysis
on locally symmetric spaces
are available.

In our case the representation of $\Gamma$ is arbitrary.
Hence, these tools are no longer applicable.
In \cite{Spilioti2018}, it is proved
that the twisted Selberg and Ruelle zeta functions associated with an arbitrary finite-dimensional
representation of $\Gamma$
admit a meromorphic continuation to the whole complex plane.
In the present  paper, we extend the results of \cite{BO} to the case of the non-unitary twist $\chi$.
We prove functional equations for the twisted dynamical zeta functions, 
relating their values at $s$ with those
at $-s$. Moreover, we prove a determinant formula, which 
expresses the zeta function in terms of regularized determinants
of certain twisted Laplace-type operators.
We state here our main results. In the functional equation (1.1) below,
$P_{\sigma}(s)$ denotes the Plancherel polynomial  associated 
with $\sigma\in\widehat{M}$
(see Section 2).
\begin{theorem}
 The Selberg zeta function $Z(s;\sigma,\chi)$ satisfies the following functional equation
\begin{equation}
 \frac{Z(s;\sigma,\chi)}{Z(-s;\sigma,\chi)}=\exp\bigg(-4\pi\dim(V_{\chi})\Vol(X)\int_{0}^{s}P_{\sigma}(r)dr\bigg).
\end{equation}
\end{theorem}
Let $M':=\Norm_{K}(A)$ be the normalizer of $A$ in $K$.
We define the restricted Weyl group by the quotient $ W_{A}:=M'/M$. 
Then, $W_{A}$ has order $2$.
$W_{A}$ acts on $\widehat{M}$ by 
$(w\sigma)(m):=\sigma(m_{w}^{-1}mm_{w})$,
where $w$ is a non-trivial element in $W_{A}$, 
$m_{w}$ is a representative of $w$ in $M'$ and 
$m\in M$.
In this paper, we distinguish always the case of $\sigma\in\widehat{M}$ being non-Weyl invariant,
i.e., $w\sigma\neq\sigma$.
In that case, we define the \textit{super}
Ruelle zeta function by
\begin{equation*}
 R^{s}(s;\sigma,\chi):=\frac{R(s;\sigma,\chi)}{R(s;w\sigma,\chi)}.
\end{equation*}
The \textit{twisted} Dirac operator associated with an arbitrary representation of $\Gamma$,
acting on smooth sections of twisted vector bundles is defined in 
\cite[Section 6]{Spilioti2018} (see also Section 4 in the present paper). As in the case of the twisted Bochner-Laplace operator (\cite{Spilioti2018}), this operator 
acts locally as the identity operator on smooth sections of the flat vector bundle.
In Section 4, we define the eta function of the twisted Dirac operator and
further its eta invariant. In addition, we  prove a formula (Lemma 4.6), which generalizes the
usual integral representation of the eta function to the case of non-unitary twists.
In the right-hand side of the functional equation (1.2) below, $\widetilde{D^{\sharp}_{\chi}(\sigma)}$
is a twisted Dirac operator (see p. 30) and 
the term $\eta(\widetilde{D^{\sharp}_{\chi}(\sigma)})$
is defined in terms of the eta function (Definition 4.5, Section 4) at zero of twisted Dirac operators
(for more details see p. 30-31).
\begin{theorem}
The super Ruelle zeta function associated with a non-Weyl invariant representation $\sigma\in\widehat{M}$ 
satisfies the functional equation
\begin{equation}
 R^{s}(s;\sigma,\chi) R^{s}(-s;\sigma,\chi)=e^{2i\pi\eta(\widetilde{D^{\sharp}_{\chi}(\sigma)})}.
\end{equation}
\end{theorem}
The determinant formula for the twisted Ruelle zeta function (Proposition 7.9) is a direct consequence of the determinant formula for the twisted Selberg zeta function (Theorem 7.8). 
We denote by $\sigma_{p}$ the standard representation of $M$ in $\Lambda^{p}\R^{d-1}\otimes \C$.
Let $|\rho|$ be the norm of $\rho$, induced by
the normalized Killing form (see (2.1), Section 2).
We consider the operators $A^{\sharp}_{\chi}(\sigma_{p}\otimes\sigma)+(s+\vert\rho\vert-p)^{2})$, 
acting on smooth sections of graded twisted vector bundles
(see \cite[p. 174--175]{Spilioti2018}, Section 4 and Section 7 in the present paper).
These operators are elliptic, differential operators of second order, 
which are not self adjoint. Since the vector bundles we consider are $\Z_{2}$-graded,
we consider  graded regularized determinants of these operators
(see Section 7).
\begin{proposition}
The Ruelle zeta function has the representation 
\begin{align*}
R(s;\sigma,\chi)=\notag&\prod_{p=0}^{d-1}{{\det}_{\gr}(A^{\sharp}_{\chi}(\sigma_{p}\otimes\sigma)+(s+\vert\rho\vert-p)^{2})}^{(-1)^{p}}\\
&\exp\bigg((-1)^{\frac{d-1}{2}+1}\pi(d+1)\dim(V_{\sigma})\dim (V_{\chi})\frac{\Vol(X)}{\Vol(S^{d})}s\bigg).
\end{align*}
\end{proposition}
This paper is organized as follows. In Section 2, we introduce the basic setup.
In Section 3, we define the twisted dynamical zeta functions. Section 4 is devoted to the study of the twisted Dirac operator and the definition of the associated eta function. The functional equations for the Selberg zeta function are derived in Section 5. In Section 6, we prove the functional equations for the Ruelle and super Ruelle zeta function. Finally, in Section 7 we prove the determinant formula for the Ruelle zeta function.

\section{Preliminaries}

\subsection{Representation theory of Lie groups}
We introduce here our algebraic setting and fix some notation. For further details we refer the reader to
\cite[Section 2]{Spilioti2018}.
Let $n\in\N_{>0}$.
Let $d=2n+1$  be an odd integer.
We consider the universal coverings $G=\Spin(d,1)$ of $\mathrm{SO}^0(d,1)$ and $K=\Spin(d)$ of $\mathrm{SO}(d)$, respectively.
We set  $\widetilde{X}:=G/K$. 
Let $\mathfrak{g},\mathfrak{k}$ be the Lie algebras of $G$ and $K$, respectively. We consider the following complexification
$\mathfrak{g}_{\C}:=\mathfrak{g}\oplus i\mathfrak{g}$.
Let 
$
 \mathfrak{g}=\mathfrak{k}\oplus\mathfrak{p}
$
be the Cartan decomposition of $\mathfrak{g}$ with respect to the Cartan involution of $G$. There exists a canonical isomorphism $T_{eK}\widetilde{X}\cong \mathfrak{p}$.
Let $\mathfrak{a}$ be a maximal abelian subalgebra of $\mathfrak{p}$.
Let $A$ be the subgroup of $G$ with Lie algebra $\mathfrak{a}$. Let $M:=\cent_{K}(A)$ be the centralizer of $A$ in $K$. 
Then, $M=\Spin(d-1)$. 
Let $\mathfrak{m}$ be the Lie algebra of $M$ and $\mathfrak{m}_{\C}:=\mathfrak{m}\oplus i\mathfrak{m}$ the  complexification of $\mathfrak{m}$. 
Let $\mathfrak{b}$, $\mathfrak{h}$ be a Cartan subalgebra of $\mathfrak{m}$ and $\mathfrak{g}$, respectively.
Let $B(X,Y)$ be the Killing form on $\mathfrak{g}\times \mathfrak{g}$ defined by $ B(X,Y)=\Tr (\ad(X)\circ \ad(Y))$.
It is a symmetric bilinear form. 
We consider the symmetric bilinear form
\begin{equation}
 \langle Y_1, Y_2\rangle:=\frac{1}{2(d-1)}B(Y_1,Y_2),\quad Y_1,Y_2 \in \mathfrak{g}.
\end{equation}
The restriction of $\langle\cdot,\cdot\rangle$ to $\mathfrak{p}$ defines
an inner product on $\mathfrak{p}$ and
therefore induces a $G$-invariant Riemannian metric on $\widetilde{X}$, which 
has constant curvature $-1$. $\widetilde{X}$ with this metric
is isometric to the $d$-dimensional real hyperbolic space $\H^{d}$.
Let $\Gamma_{1}$ be a torsion free cocompact discrete subgroup of $\mathrm{SO}^0(d,1)$.
We assume that $\Gamma_{1}$ can be lifted to a subgroup $\Gamma$ of $G$. Then, $X:=\Gamma\backslash \widetilde{X}$ is a
compact hyperbolic manifold of odd dimension $d$.

Let $G=KAN$ be the standard Iwasawa decomposition of $G$. 
Let $\Delta^{+}(\mathfrak{g},\mathfrak{a})$ be the set of positive roots of $(\mathfrak{g},\mathfrak{a})$.
Then, $\Delta^{+}(\mathfrak{g},\mathfrak{a})$ consists of a single root 
$\alpha$.
Let $\mathfrak{g}_{\alpha}$ be the corresponding root space.
We define 
\begin{equation}
\rho:=\frac{1}{2}\dim(\mathfrak{g}_\alpha)\alpha.
\end{equation}
Let $\Delta^+(\mathfrak{m}_{\C},\mathfrak{b})$ be the set of the positive roots of the system $(\mathfrak{m}_{\C},\mathfrak{b})$.
We define
\begin{equation}
\rho_{\mathfrak{m}}:=\frac{1}{2}\sum_{\alpha\in \Delta^+(\mathfrak{m}_{\C},\mathfrak{b})}\alpha. 
\end{equation}
Let $\Delta^{+}(\mathfrak{g}_{\C},\mathfrak{h})$ be the set of the positive roots of the system $(\mathfrak{g}_{\C},\mathfrak{h})$.
We define
\begin{equation}
 \rho_{\mathfrak{g}}:=\frac{1}{2}\sum_{\alpha\in\Delta^{+}(\mathfrak{g}_{\C},\mathfrak{h})}\alpha.
\end{equation}
Let $\nu_{\sigma}$ be the highest weight of $\sigma$. 
We have
\begin{equation}
   \nu_{\sigma}=(\nu_{1},\ldots,\nu_{n-1},\nu_{n}),
\end{equation}
where $\nu_{1}\geq\ldots\geq\nu_{n-1}\geq\lvert\nu_{n}\rvert$ and $\nu_{i},i=1,\ldots,n$ are all half integers (\cite[p. 20]{BO}).

Let $W_{A}$ be the restricted Weyl group defined by 
$
 W_{A}:=M'/M,
$
where  $M'=\Norm_{K}(A)$ is the normalizer of $A$ in $K$.
$W_{A}$ has order 2. Let $\widehat{M}$ be the set of the equivalent classes of irreducible unitary representations of $M$.
For a non-trivial element $w$ of $W_{A}$, 
the action of $W_{A}$ on $\widehat{M}$ is defined by
\begin{equation*}
(w\sigma)(m):=\sigma(m_{w}^{-1}mm_{w}),\quad m\in M, \sigma\in\widehat{M},
\end{equation*}
where $m_{w}$ is a representative of $w$ in $M'$.
We  distinguish the following two cases for this action:
\begin{itemize}
 \item  {\bf case (a)}: \textit{$\sigma$ is invariant under the action of the restricted Weyl group $W_{A}$;}
 \item {\bf case (b)}: \textit{$\sigma$ is not invariant under the action of the restricted Weyl group $W_{A}$.}
\end{itemize}
 
\subsection{The principal series representation}

Let $P=MAN$ be the standard parabolic subgroup of $G$.
Let $\rho$ be defined by (2.2). Let  $|\rho|$ be its norm, induced by
the symmetric bilinear form, defined by (2.1).
For $(\sigma,V_{\sigma})\in \widehat{M}$,
we define the space $\mathcal{H}_{\sigma,\lambda}$ of continuous functions on $G$ by
\begin{equation*}
 \mathcal{H}_{\sigma,\lambda}:=\{f\in C(G,V_{\sigma})\colon f(gman)=e^{-(i\lambda+|\rho|)\log a}\sigma^{-1}(m)f(g), \forall g\in G, \forall man\in P\},
\end{equation*}
where $\lambda\in\C$, with norm 
\begin{equation}
 \lVert f \rVert_{c}=\int_{K}\lVert f(k) \rVert^{2}dk.
\end{equation}
The principal series representation are defined by 
\begin{equation}
 \pi_{\sigma,\lambda}:=\Ind_{P}^{G}(\sigma\otimes e^{i\lambda}\otimes \Id),
\end{equation}
with representation space the Hilbert space, obtained by completion of $\mathcal{H}_{\sigma,\lambda}$ with respect to the norm 
$\lVert \cdot\rVert_{c}$ in (2.6).
For $f\in\mathcal{H}_{\sigma,\lambda}$, the action of $G$ on $f$ is given by $ \pi_{\sigma,\lambda}(g)f(g')=f(g^{-1}g')$.
Let $\mathfrak{a}_{\C}^{*}$ be the space of the linear functionals on $\mathfrak{a}_{\C}$.
In the definition of the space $ \mathcal{H}_{\sigma,\lambda}$,
$\lambda$ is a complex number. $\mathfrak{a}_{\C}^{*}$ 
is identified with $\C$, using the positive root.
If $\lambda\in\R$, then the representation $\pi_{\sigma,\lambda}$ is unitary.

\subsection{Plancherel measure}
Let $\mu_{PL}(\pi_{\sigma,\lambda})$ be the Plancherel measure, viewed as a measure on the set of 
the principal series representations $\pi_{\sigma,\lambda}$.
Since $\rank(G)>\rank(K)$, by classical result of Harish-Chandra (\cite{HC2}), the set of the 
discrete series representations of $G$ is empty.
By \cite[Theorem 13.2]{Knapp}, there exists a constant 
$c(n)\in\R$, $c(n)\neq 0$, such that
\begin{equation*}
 d\mu_{PL}(\pi_{\sigma,\lambda})=-c(n)Q_{\sigma}(i\lambda)d\lambda,
\end{equation*}
where $Q_{\sigma}(i\lambda)$ is the Plancherel polynomial
given by
\begin{equation}
 Q_{\sigma}(i\lambda)=\prod_{\alpha\in\Delta^{+}(\mathfrak{g}_{\C},\mathfrak{h})}\frac{\langle i\lambda+\nu_{\sigma}+\rho_{\mathfrak{m}},\alpha\rangle}{\langle \rho_{\mathfrak{g}},\alpha\rangle},
\end{equation}
where $\langle\cdot,\cdot\rangle$ is defined by (2.1).
Let $z=i\lambda\in\C$.
By \cite[eq. (2.33), (2.34)]{Pf},
\begin{equation}
 c(n)=\frac{(-1)^{n+1}}{2\Vol(S^{d})},
\end{equation}
where $\Vol(S^{d})$ denotes the volume of the 
$d$-dimensional Euclidean unit sphere.
By \cite[p. 264-265]{Mi2}, $Q_{\sigma}(z)$ is an even polynomial of $z$
and hence $Q_{\sigma}(z)=Q_{\sigma}(-z)$.
We set
\begin{equation*}
P_{\sigma}(z)=-c(n)Q_{\sigma}(z).
\end{equation*}
\textbf{Remark:}
We note that in the definition of the Plancherel measure in \cite[2.6]{Spilioti2018},
the constant $c(n)$ is missing.

\section{The dynamical zeta functions of Ruelle and Selberg}

In this section we define the twisted Selberg and Ruelle zeta functions associated with the geodesic flow on the
unit sphere  bundle $S(X)$ of $X$. Here, we identify $S(X)=\Gamma\backslash G/M$. For more details about this identification, we refer the reader to \cite[Subsection 1.1.1 and Section 3.1]{BO}.
There exists a 1-1 correspondence between the closed geodesics on a manifold $X$ with negative sectional curvature 
and the non-trivial conjugacy classes of the fundamental group $\pi_{1}(X)$ of $X$ (\cite{GKM}).
The hyperbolic elements of $\Gamma$ are the semisimple elements of this group.
Since $\Gamma$ is a cocompact and torsion free, every element $\gamma\in\Gamma-\{e\}$ is hyperbolic.
Let $[\gamma]$ be the conjugacy class of $\gamma$, defined by 
$[\gamma]:=\{ \gamma'\gamma(\gamma')^{-1}:\gamma'\in \Gamma\}$.
We denote by $c_{\gamma}$ the closed geodesic on $X$ associated with the hyperbolic conjugacy class $[\gamma]$,
and by $l(\gamma)$ the length of $c_{\gamma}$.
An element $\gamma\in\Gamma$ is called primitive if there exists no $n\in\N$ with $n>1$ and $\gamma_{0}\in\Gamma$ such that $\gamma=\gamma_{0}^{n}$.
A primitive element $\gamma_{0}\in\Gamma$ corresponds to a prime closed geodesic on $X$.
This is a geodesic of minimal length.
Hence, if a hyperbolic element $\gamma$ in $\Gamma$ is generated by a primitive element $\gamma_{0}$, then
there exists a $n_{\Gamma}(\gamma)\in \N$ such that $\gamma=\gamma_{0}^{n_{\Gamma}(\gamma)}$ and the corresponding 
closed geodesic is of length $l(\gamma)=n_{\Gamma}(\gamma)l(\gamma_{0})$.

We define the dynamical zeta functions for an
arbitrary finite-dimensional representation $\chi$ of $\Gamma$.
\begin{defi}
Let $\chi\colon\Gamma\rightarrow \GL(V_{\chi})$ be a finite-dimensional representation of $\Gamma$ and $\sigma\in \widehat{M}$.
The twisted Selberg zeta function $Z(s;\sigma,\chi)$ for 
$X$ is defined  by the infinite product
\begin{equation}
Z(s;\sigma,\chi):=\prod_{\substack{[\gamma]\neq{e}\\ [\gamma]\prim}} \prod_{k=0}^{\infty}\det\big(\Id-(\chi(\gamma)\otimes\sigma(m_\gamma)\otimes S^k(\Ad(m_\gamma a_\gamma)|_{\overline{\mathfrak{n}}})) e^{-(s+|\rho|)\lvert l(\gamma)}\big),
\end{equation}
where $s\in \C$, $\overline{\mathfrak{n}}$ is the negative root space of $(\mathfrak{g},\mathfrak{a})$ and
$S^k(\Ad(m_\gamma a_\gamma)_{\overline{\mathfrak{n}}})$ denotes the $k$-th
symmetric power of the adjoint map $\Ad(m_\gamma a_\gamma)$ restricted to $\mathfrak{\overline{n}}$.
\end{defi}
By \cite[Proposition 3.4]{Spilioti2018}, there exists a positive constant $c$, such that the infinite product in (3.1) converges 
absolutely and uniformly on compact subsets of the half-plane Re$(s)>c$.
\begin{defi} Let $\chi\colon\Gamma\rightarrow \GL(V_{\chi})$ be a finite-dimensional representation of $\Gamma$ and $\sigma\in \widehat{M}$.
The twisted Ruelle zeta function $ R(s;\sigma,\chi)$ for $X$ is defined by the infinite product
\begin{equation}
 R(s;\sigma,\chi):=\prod_{\substack{[\gamma]\neq{e}\\ [\gamma]\prim}}\det\big(\Id-(\chi(\gamma)
 \otimes\sigma(m_{\gamma}))e^{-sl(\gamma)}\big),
\end{equation}where $s\in\C$.
\end{defi}
By \cite[Proposition 3.5]{Spilioti2018}, there exists a positive constant $r$, such that the infinite product in (3.2) converges 
absolutely and uniformly on compact subsets of the half-plane Re$(s)>r$.
We consider now the case in which $\sigma$ is not invariant under the action of the restricted Weyl group $W_{A}$. This is case (b) (Section 2).
We recall from \cite[Section 3]{Spilioti2018} that in case (b), for a non-trivial element $w$ of $W_{A}$,
we define in addition the symmetrized zeta function
 \begin{equation}
 S(s;\sigma,\chi):=Z(s;\sigma,\chi)Z(s;w\sigma,\chi);
\end{equation}
 the super zeta function
 \begin{equation}
 Z^{s}(s;\sigma,\chi):=\frac{Z(s;\sigma,\chi)}{Z(s;w\sigma,\chi)};
\end{equation}
and the super Ruelle zeta function
\begin{equation*}
 R^{s}(s;\sigma,\chi):=\frac{R(s;\sigma,\chi)}{R(s;w\sigma,\chi)}.
\end{equation*}
We put
\begin{equation}
 L(\gamma;\sigma,\chi):=\frac{\tr(\chi(\gamma)\otimes\sigma(m_{\gamma}))e^{-|\rho|l(\gamma)}}
 {\det{(\Id-\Ad(m_{\gamma}a_{\gamma})_{\overline{n}})}}.
\end{equation}

By \cite[Lemma 3.9]{Spilioti2018}, the logarithmic derivatives $L(s), L_{S}(s), L^{s}(s)$ of the Selberg, symmetrized and super zeta function, respectively,
are given by
\begin{equation*}\label{log der selberg}
 L(s):=\frac{d}{ds}\log(Z(s;\sigma,\chi))=\sum_{[\gamma]\neq{e}}\frac{l(\gamma)}{n_{\Gamma}(\gamma)}L(\gamma;\sigma,\chi)
e^{-sl(\gamma)};
\end{equation*}
 \begin{equation}\label{f:log der symmetrized}
 L_{S}(s):=\frac{d}{ds}\log(S(s;\sigma,\chi))=\sum_{[\gamma]\neq{e}}\frac{l(\gamma)}{n_{\Gamma}(\gamma)}L(\gamma;\sigma+w\sigma,\chi)
e^{-sl(\gamma)};
\end{equation} 
 \begin{equation}\label{f:log der super}
 L^{s}(s):=\frac{d}{ds}\log(Z^{s}(s;\sigma,\chi))=\sum_{[\gamma]\neq{e}}\frac{l(\gamma)}{n_{\Gamma}(\gamma)}L(\gamma;\sigma-w\sigma,\chi)
e^{-sl(\gamma)}.
\end{equation}

\section{The eta function of the twisted Dirac operator}

Let $\chi\colon\Gamma\rightarrow \GL(V_{\chi})$ be a finite-dimensional representation of $\Gamma$ and $\sigma\in \widehat{M}$.
In this section, we define the eta function associated with the twisted Dirac operator, which will be denoted by
$D^{\sharp}_{\chi}(\sigma)$. This operator is associated with the representations $\sigma$ and  $\chi$.
It is an elliptic first order differential operator, but no longer self-adjoint.
We recall from \cite[Section 6]{Spilioti2018} the definition of $D^{\sharp}_{\chi}(\sigma)$.

Let $s$ be the spin representation of $\Spin(d)$ with representation space $S$.
Let $R(K)$, $R(M)$ be the representation rings over $\Z$ of $K$ and $M$, respectively. 
Let $i^{*}\colon R(K)\rightarrow R(M)$ be the pullback of the embedding
$i\colon M\hookrightarrow K$.
By \cite[Proposition 1.1, (3)]{BO}, there exists 
 a unique element  $\tau(\sigma)\in \widehat{K}$ and a splitting 
$
 s \otimes \tau(\sigma)=\tau^{+}(\sigma)\oplus \tau^{-}(\sigma)$,
where $\tau^{+}(\sigma),\tau^{-}(\sigma)\in R(K)$,
such that $
   \sigma+w\sigma=i^{*}(\tau^{+}(\sigma)-\tau^{-}(\sigma)).
$
Let $V_{\tau(\sigma)}$ be the representation space of $\tau(\sigma)$.
We define the representation 
\begin{equation*}
 \tau_{s}(\sigma):= s\otimes\tau(\sigma)
\end{equation*}
of $K$ with representation space $V_{\tau_{s}(\sigma)}=S\otimes V_{\tau(\sigma)}$.
We consider the homogeneous vector bundle $\widetilde{E}_{\tau(\sigma)}$ defined by
\begin{equation*}
\widetilde{E}_{\tau(\sigma)}:=G\times_{\tau(\sigma)}V_{\tau(\sigma)}\rightarrow\widetilde{X}
\end{equation*}
Let $\nabla^{\tau(\sigma)}$ be the canonical connection in  $\widetilde{E}_{\tau(\sigma)}$.
Let $E_{s}$ be the spinor bundle over $\widetilde{X}$ associated with $s$ and equipped with the spin connection $\nabla$.
The vector bundle $\widetilde{E}_{\tau_{s}(\sigma)}:= \widetilde{E}_{\tau(\sigma)}\otimes E_{s}$ over $\widetilde{X}$ 
carries a connection $\nabla^{\tau_{s}(\sigma)}$, defined by the formula 
\begin{equation*}
 \nabla^{\tau_{s}(\sigma)}:=\nabla^{\tau(\sigma)}\otimes 1+1\otimes\nabla.
\end{equation*}
Let $\cdot\colon \mathfrak{p}\otimes S\rightarrow S$ the Clifford multiplication on $\mathfrak{p}\otimes S$.
We extend the Clifford multiplication by requiring that it acts on $V_{\tau_{s}(\sigma)}=S\otimes V_{\tau(\sigma)}$ by
\begin{equation*}
  e\cdot (\phi\otimes \psi)=(e\cdot \phi)\otimes \psi, \quad e\in\Cl(\mathfrak{p}), \phi\in S, \psi\in V_{\tau(\sigma)}.
\end{equation*}
Let $(e_{1},\ldots,e_{d})$ be a local orthonormal 
frame field over an open set $U\subset \widetilde{X}$.
The Dirac operator $\widetilde{D}(\sigma)$ acting on $C^{\infty}(\widetilde{X},\widetilde{E}_{\tau_{s}(\sigma)})$ is defined by
\begin{equation*}
 \widetilde{D} (\sigma)f=\sum_{i=1}^{d}e_{i}\cdot \nabla_{e_{i}}^{\tau_{s}(\sigma)}f.
\end{equation*}
Let $E_{\tau_{s}(\sigma)}:= \Gamma\backslash \widetilde{E}_{\tau_{s}(\sigma)}$ be the locally homogeneous vector bundle over $X$.
Let $\chi:\Gamma\rightarrow \GL(V_{\chi})$ be an arbitrary finite-dimensional representation of $\Gamma$. Let $E_{\chi}$
be the associated flat vector bundle over $X$, equipped with a flat connection $\nabla^{\chi}$.
We consider the product vector bundle $E_{\tau_{s}(\sigma)}\otimes E_{\chi}$ over $X$
and equip this bundle with the product connection $\nabla^{E_{\tau_{s}(\sigma)}\otimes E_{\chi}}$ defined by
\begin{equation*}
 \nabla^{E_{\tau_{s}(\sigma)}\otimes E_{\chi}}:=\nabla^{\tau_{s}(\sigma)}\otimes 1+1\otimes\nabla^{\chi}.
\end{equation*}
We define the Clifford multiplication on $V_{\tau_{s}(\sigma)}\otimes V_{\chi}$ by requiring that it acts only on $V_{\tau_{s}(\sigma)}$, i.e.,
\begin{equation*}
 e\cdot (w\otimes v)=(e\cdot w)\otimes v, \quad e \in\Cl(\mathfrak{p}), w\in V_{\tau_{s}(\sigma)}, v\in V_{\chi}.
\end{equation*}
We consider  an open subset $U$ of $X$ such that $E_{\chi}\lvert_{U}$ is trivial. Let also $(v_{j}), j=1,\dots,m$,  
be any base of flat sections of $E_{\chi}\lvert_{U}$, where $m=\rank(E_\chi)$, and $\phi_{j}\in C^{\infty}(U, E_{\tau_{s}(\sigma)}\lvert_{U})$.
Then,
\begin{equation*}
 E_{\tau_{s}(\sigma)}\otimes E_{\chi}\lvert_{U}\cong\bigoplus_{j=1}^{m}E_{\tau_{s}(\sigma)}\lvert_{U},
\end{equation*}
and for each $\phi \in C^{\infty}(U,E_{\tau_{s}(\sigma)} \otimes E_{\chi}\lvert_{U})$,
\begin{equation*}
\phi=\sum_{j=1}^{m}\phi_{j} \otimes v_{j}.
\end{equation*}
Then,
\begin{equation*}
  \nabla^{E_{\tau_{s}(\sigma)}\otimes E_{\chi}}\phi=\sum_{j=1}^{m}\nabla^{E_{\tau_{s}(\sigma)}}\phi_{j}\otimes v_{j}.
\end{equation*}
The definition above is independent of the choice of the base of flat sections of $E_{\chi}|_{U}$, since the transition maps comparing flat sections are constant.
The twisted Dirac operator $D^{\sharp}_{\chi}(\sigma)$ associated with $\nabla^{E_{\tau_{s}(\sigma)}\otimes E_{\chi}}$
is locally defined by
\begin{equation*}
 D^{\sharp}_{\chi}(\sigma)\phi:=\sum_{i=1}^{d}e_{i}\cdot\nabla_{e_{i}}^{E_{\tau_{s}(\sigma)}\otimes E_{\chi}}\phi.
\end{equation*}
The local definition of the twisted Dirac operator
is independent  of the choice of the orthonormal frame field.
We consider the pullbacks $\widetilde{E}_{\tau_{s}(\sigma)}, \widetilde{E}_{\chi}$ to $\widetilde{X}$ of $E_{\tau_{s}(\sigma)},E_{\chi}$, respectively.
Then, $\widetilde{E}_{\chi}\cong \widetilde{X}\times V_{\chi}$ and
\begin{equation*}
 C^{\infty}(\widetilde{X}, \widetilde{E}_{\tau_{s}(\sigma)}\otimes \widetilde{E}_{\chi})\cong  C^{\infty}(\widetilde{X}, \widetilde{E}_{\tau_{s}(\sigma)})\otimes V_{\chi}.
\end{equation*}
With respect to this isomorphism, it follows from 
the definition of the twisted Dirac operator $D^{\sharp}_{\chi}(\sigma)$
that the lift ${\widetilde{D}}^{\sharp}_{\chi}(\sigma)$ of $D^{\sharp}_{\chi}(\sigma)$ 
to $\widetilde{X}$ is of the form
\begin{equation*}
\widetilde{D}^{\sharp}_{\chi}(\sigma)=\widetilde{D}(\sigma)\otimes \Id_{V_{\chi}}.
\end{equation*}
We recall here the definition of the associated operator $A_{\chi}^{\sharp}(\sigma)$ acting on smooth sections of twisted vector bundles 
(\cite[p. 174-175]{Spilioti2018}). 
Following the proof of Proposition 1.1 in \cite[p. 22]{BO} (see also \cite[Proposition 2.3]{Pf}), 
there exist unique integers
$m_{\tau}(\sigma)\in\{-1,0,1\}$, which are equal to zero except for finitely many $\tau\in \widehat{K}$,
such that,
\begin{itemize}
 \item if $\sigma$ is Weyl invariant, 
$
 \sigma=\sum_{\tau\in\widehat{K}}m_{\tau}(\sigma)i^{*}(\tau);
$
 \item if $\sigma$ is non-Weyl invariant, 
$
 \sigma+w\sigma=\sum_{\tau\in\widehat{K}}m_{\tau}(\sigma)i^{*}(\tau).
$
\end{itemize}
We define a locally homogeneous vector bundle $E(\sigma)$ associated to $\sigma$ in the following way.
\begin{equation*}
 E(\sigma)=\bigoplus_{\substack{\tau\in\widehat{K}\\m_{\tau}(\sigma)\neq 0}}E_{\tau},
\end{equation*}
where $E_{\tau}$ is the locally homogeneous vector bundle over $X$ associated with $\tau\in\widehat{K}$.
By the construction of the locally  homogeneous vector bundles $E(\sigma)$ and $E_{\tau_{s}(\sigma)}$ over $X$ (see \cite[Propos 1.1]{BO}), 
$E(\sigma)=E_{\tau_{s}(\sigma)}$ up to a $\Z_{2}$-grading.

Let $\tau\in\widehat{K}$ and $E_{\tau}$ be the locally homogeneous vector bundle over $X$ associated with $\tau$.
Let $\Delta_{\tau}$ be the Bochner-Laplace operator associated with $\tau$, acting on smooth sections of $E_{\tau}$.
Let $\widetilde{\Delta}_{\tau}$ be the lift of $\Delta_{\tau}$ to $\widetilde{X}$. As in \cite[eq. (5.23)]{Spilioti2018}, we set
$\widetilde{A}_{\tau}:= \widetilde{\Delta}_{\tau}-\lambda_{\tau}\Id$, where $\lambda_{\tau}$ is the Casimir eigenvalue of $\tau$ (\cite[eq. (5.4)]{Spilioti2018}).
Let $\Delta_{\tau,\chi}$ be the twisted 
Bochner-Laplace operator acting on smooth sections of the twisted vector bundle $E_{\tau}\otimes E_{\chi}$.
This operator is defined in \cite[Section 4]{Spilioti2018}. Then, the operator $A_{\tau,\chi}^{\sharp}$ is induced by the twisted Bochner-Laplace operator 
$\Delta^{\sharp}_{\tau,\chi}$ and
acts on smooth sections of the twisted vector bundle
$E_{\tau}\otimes E_{\chi}$. The lift $\widetilde{A}_{\tau,\chi}^{\sharp}$ of $A_{\tau,\chi}^{\sharp}$ to $\widetilde{X}$ is given by
\begin{equation*}
 \widetilde{A}_{\tau,\chi}^{\sharp}=\widetilde{A}_{\tau}\otimes \Id_{V_{\chi}},
\end{equation*}
(\cite[eq. (5.24)]{Spilioti2018}).
Let $\rho$, $\rho_{m}$, $\nu_{\sigma}$ be as in (2.2), (2.3) and (2.5), respectively.
We define the number $c(\sigma)$ by
\begin{equation*}
 c(\sigma):=-\lvert \rho \rvert^{2}-\lvert\rho_{m}\rvert^{2}+\lvert \nu_{\sigma}+\rho_{m}\rvert^{2},
\end{equation*}
and the operator $A_{\chi}^{\sharp}(\sigma)$ by
\begin{equation}
 A_{\chi}^{\sharp}(\sigma):=\bigoplus_{m_{\tau}(\sigma)\neq 0}A^{\sharp}_{\tau,\chi}+c(\sigma).
\end{equation}
(\cite[p. 174-175]{Spilioti2018}).
By \cite[eq. (6.8)]{Spilioti2018}, the Parthasarathy formula generalizes as
\begin{equation*}
D^{\sharp}_{\chi}(\sigma)^{2}=A_{\chi}^{\sharp}(\sigma).
\end{equation*}
The square $D^{\sharp}_{\chi}(\sigma)^2$ of the twisted Dirac operator acting on smooth sections of $E_{\tau_{s}(\sigma)}\otimes E_{\chi}$ is
a second order elliptic differential operator
but no longer self-adjoint. 
Nevertheless, for $x\in X$ and $\xi\in T_{x}^{*}X$, its principal symbol is given by 
\begin{equation*}
 \sigma_{D^{\sharp}_{\chi}(\sigma)^{2}}(x,\xi)=\lVert\xi\rVert^{2}
 \otimes\Id_{(V_{\tau_{s}(\sigma)}\otimes V_{\chi})_{x}}.
\end{equation*}
Hence it has nice spectral properties. By \cite[Lemma 8.6]{Spilioti2018},
its spectrum is discrete
and contained in a translate of a positive cone $C\subset\C$. 
We consider now the corresponding heat operators $e^{-tD^{\sharp}_{\chi}(\sigma)^{2}}$, $D^{\sharp}_{\chi}(\sigma)e^{-tD^{\sharp}_{\chi}(\sigma)^{2}}$.
Let $dg$ a Haar measure on $G$, normalized as in \cite[p. 157]{Spilioti2018}. We equip the spaces $\widetilde{X}$ and $X$
with the induced quotient measure. By \cite[p. 171-173 and p. 181]{Spilioti2018}, $e^{-tD^{\sharp}_{\chi}(\sigma)^{2}}$, $D^{\sharp}_{\chi}(\sigma)e^{-tD^{\sharp}_{\chi}(\sigma)^{2}}$
are well defined, trace class operators, acting on smooth sections of the vector bundle $E_{\tau_{s}(\sigma)}\otimes E_{\chi}$.
\begin{lem}
 The asymptotic expansion of the trace of the kernel $K^{\tau_{s}(\sigma),\chi}_{t}$ of the operator
$D^{\sharp}_{\chi}(\sigma)e^{-tD^{\sharp}_{\chi}(\sigma)^{2}}$ is given by 
\begin{equation}
 \tr K^{\tau_{s}(\sigma),\chi}_{t}(x,x)\sim_{t\rightarrow 0^{+}}\dim(V_{\chi})(a_{0}(\widetilde{x})t^{1/2}+O(t^{3/2},\widetilde{x})),
\end{equation}
where $\widetilde{x}\in \widetilde{X}$ is  the lift of $x$ to $\widetilde{X}$,
$a_{0}(\widetilde{x})$ is a $C^{\infty}$-function on $\widetilde{X}$ and $O(t^{3/2},\widetilde{x}))$ is uniform on $\widetilde{X}$.
\end{lem}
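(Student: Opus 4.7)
My plan is to reduce the on-diagonal asymptotic to a purely local question on the universal cover $\widetilde X$ via the pre-trace identity (3.1), and then invoke the classical Bismut--Freed analysis of the pointwise heat trace of a Dirac-type operator in odd dimensions.

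First, I would set $x'=x$ in (3.1), write $x=\Gamma g$, and split the sum over $\Gamma$ into the identity contribution and the rest:
\begin{equation*}
\tr K^{\tau_{s}(\sigma),\chi}_{t}(x,x)=\dim(V_{\chi})\,\tr K^{\tau_{s}(\sigma)}_{t}(e)+\sum_{\gamma\neq e}\tr\!\bigl(K^{\tau_{s}(\sigma)}_{t}(g^{-1}\gamma g)\bigr)\,\tr\chi(\gamma).
\end{equation*}
By $G$-equivariance, $\tr K^{\tau_{s}(\sigma)}_{t}(e)$ is constant in $x$, so the first summand is already in the desired form once the small-$t$ behaviour of that single number is understood. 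For the remainder, I would use standard Gaussian off-diagonal heat-kernel estimates of the form $|K^{\tau_{s}(\sigma)}_{t}(g)|\leq C t^{-N}\exp(-d(eK,gK)^{2}/5t)$. Cocompactness and torsion-freeness of $\Gamma$ force $d(g,\gamma g)\geq \ell_{0}>0$ for $\gamma\neq e$, and a standard lattice point count, together with the at most exponential growth of $\|\chi(\gamma)\|$ in $d(g,\gamma g)$, bounds the remainder by $O(e^{-c/t})$ for some $c>0$, which is $o(t^{N})$ for every $N$.

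It then remains to extract the small-$t$ expansion of the local density $\tr K^{\tau_{s}(\sigma)}_{t}(e)$. I would appeal to the general fact that for any Dirac-type operator $D$ on an odd-dimensional Riemannian spin manifold the pointwise trace density admits an asymptotic expansion in odd half-integer powers of $t$,
\begin{equation*}
\tr K^{De^{-tD^{2}}}_{t}(x,x)\sim_{t\to 0^{+}}\sum_{k\geq 0}a_{k}(x)\,t^{(2k+1)/2},
\end{equation*}
with smooth coefficients $a_{k}$. The naive short-time heat expansion composed with the first-order operator $D$ would produce terms at $t^{-d/2+j}$ for $j\geq 0$, all half-integer powers when $d$ is odd; the vanishing of the $\lceil d/2\rceil$ most singular contributions is exactly the content of the Bismut--Freed cancellation, proved locally via Getzler rescaling. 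Applied to $\widetilde D(\sigma)$ on $\widetilde X$, whose principal symbol is $(i\xi)\otimes \Id$ and which locally agrees with $D^{\sharp}_{\chi}(\sigma)$ up to the flat twist by $E_{\chi}$ (contributing only the prefactor $\dim V_{\chi}$), this produces the claimed $a_{0}(x)t^{1/2}+O(t^{3/2},x)$ asymptotic.

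The main obstacle is this Bismut--Freed cancellation; since, however, it is a local statement about Dirac-type operators on odd-dimensional spin manifolds and is well established in the literature, it may be cited rather than reproved, leaving the proof essentially a matter of assembling the pre-trace splitting with the off-diagonal Gaussian bound and the local expansion.
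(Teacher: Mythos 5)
Your proof follows the same route as the paper: reduce to the universal cover via the pre-trace formula (3.1), cite Bismut--Freed (\cite[Theorem 2.4]{BF}) for the pointwise $t^{1/2}$-expansion of $\tr K^{\tau_{s}(\sigma)}_{t}$, and observe that the flat twist by $\chi$ contributes the factor $\dim(V_{\chi})$ in the $\gamma=e$ term. Your treatment is in fact more complete than the paper's, which passes silently over the $\gamma\neq e$ terms in (3.1): your Gaussian off-diagonal bound combined with the positive lower bound on $d(g,\gamma g)$ (from cocompactness and torsion-freeness) and the subexponential lattice count gives the $O(e^{-c/t})$ estimate that justifies discarding those contributions, a step that is implicitly assumed but not written out in the original proof.
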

\begin{proof}
Let $K_{t}^{\tau_{s}(\sigma)}$ be the kernel of the the operator $\widetilde{D}(\sigma)e^{-t\widetilde{D}(\sigma)^{2}}$.
Let $x,y\in X$. Let $\widetilde{x},\widetilde{y}\in \widetilde{X}$ be the lifts of $x,y$ to $\widetilde{X}$.
The action of $\Gamma$ on $\widetilde{X}$ induces the following isomorphism of the fibres: 
$R_{\gamma}\colon (\widetilde{E}_{\tau_{s}}(\sigma))_{\widetilde{y}}\rightarrow(\widetilde{E}_{\tau_{s}}(\sigma))_{\gamma\widetilde{y}}$. 
As in  \cite[p. 180--181]{Spilioti2018}, the kernel $K^{\tau_{s}(\sigma),\chi}_{t}$ of the operator $D^{\sharp}_{\chi}(\sigma)e^{-tD^{\sharp}_{\chi}(\sigma)^{2}}$
is given by
\begin{equation}
 K^{\tau_{s}(\sigma),\chi}_{t}(x,y)=\sum_{\gamma\in \Gamma}K_{t}^{\tau_{s}(\sigma)}(\widetilde{x},\gamma\widetilde{y}) \circ(R_{\gamma}\otimes\chi(\gamma)).
\end{equation}
The right-hand side of (4.3) can be written as
\begin{equation}
K_{t}^{\tau_{s}(\sigma)}(\widetilde{x},\widetilde{y})\otimes \Id_{V_{\chi}}+\sum_{\substack{\gamma\in\Gamma\\\gamma\neq e}}
K_{t}^{\tau_{s}(\sigma)}(\widetilde{x},\gamma\widetilde{y}) \circ(R_{\gamma}\otimes\chi(\gamma)).
\end{equation}
Let $\lvert \cdot \rvert$ denote the pointwise norm of the homomorphism  
$K_{t}^{\tau_{s}(\sigma)}\in Hom((\widetilde{E}_{\tau(\sigma)})_{\widetilde{y}},(\widetilde{E}_{\tau(\sigma)})_{\widetilde{x}})$.
Let $d(\cdot,\cdot)$ be the the geodesic distance on $\widetilde{X}$. Recall that the length $l(\gamma)$ of the geodesic associated with $\gamma$ is given by 
$l(\gamma):=\inf\{ d(\widetilde{x},\gamma\widetilde{x})\colon \widetilde{x}\in \widetilde{X}\}$. By \cite[Proposition 3.2]{Muller1998}, 
for every $T>0$, there exists $c>0$ such that
\begin{equation*}
 \lvert K_{t}^{\tau_{s}(\sigma)}(\widetilde{x},\widetilde{y})\rvert \leq c t^{-d/2} e^{-\frac{d^{2}(\widetilde{x},\widetilde{y})}{4t}},
\end{equation*}
for $0<t\leq T$ and $\widetilde{x},\widetilde{y}\in \widetilde{X}$.
Hence,
\begin{align}
 \sum_{\substack{\gamma\in\Gamma\\\gamma\neq e}} \lvert \tr K_{t}^{\tau_{s}(\sigma)}(\widetilde{x},\gamma\widetilde{x})\rvert\notag&\leq
 ct^{-d/2}\sum_{\substack{\gamma\in\Gamma\\\gamma\neq e}}  e^{-\frac{d^{2}(\widetilde{x},\gamma\widetilde{x})}{4t}}\\
 & \leq ct^{-d/2}\sum_{\substack{\gamma\in\Gamma\\\gamma\neq e}}  e^{-\frac{l(\gamma)^{2}}{4t}}.
\end{align}
By the normalization of the Haar measure on $G$ as in \cite[p. 157]{Spilioti2018}, there is a
positive constant $C>0$ such that for every $R>0$ 
\begin{equation*}
 \Vol(B(x_{0},R))\leq Ce^{2|\rho| R},
\end{equation*}
where $\rho$ is as in (2.2).
$\Gamma$ is a cocompact lattice of $G$. This implies that, there exists a positive constant $C'$ such that 
\begin{equation}
\sharp \{[\gamma]:l(\gamma)<R\}\leq\sharp\{ \gamma \in \Gamma: l(\gamma)\leq R\}\leq C'e^{2|\rho |R}
\end{equation}
(\cite[(1.31)]{BO}).
We define 
\begin{equation*}
 \mathcal{N}(R):=\sharp\{[\gamma] \in C( \Gamma)\colon l(\gamma)\leq R\},\quad R\geq 0,
\end{equation*}
where $C(\Gamma)$ denotes the set of $\Gamma$-conjugacy classes.
Since there exists a $c>0$ such that $c\leq l(\gamma)$,
for every $\gamma\neq e$, we have the following estimates.
\begin{align*}
 \sum_{\substack{\gamma\in\Gamma\\\gamma\neq e}}  e^{-\frac{l(\gamma)^{2}}{4t}}&=
 \sum_{k=1}^{\infty}\sum_{\substack{\gamma\neq {e}\\ k c\leq l(\gamma)< (k+1)c}} e^{-\frac{l(\gamma)^{2}}{4t}}\\\notag
 &\leq \sum_{k=1}^{\infty} \mathcal{N}((k+1)c) e^{-\frac{c^{2}k^{2}}{4t}}.
\end{align*}
By (4.6), we get for  $0<t\leq T$,
\begin{equation}
  \sum_{\substack{\gamma\in\Gamma\\\gamma\neq e}}  e^{-\frac{l(\gamma)^{2}}{4t}}
  \leq C'\sum_{k=1}^{\infty} e^{2|\rho |(k+1)c} e^{-\frac{c^{2}k^{2}}{4t}}<\infty.
\end{equation}
By \cite[Lemma 5.1 and Corollary 5.2]{gangolli1968} (see also \cite[p. 28-30]{Wo}), there exists a $c_{0}>0$ such that for every $\gamma\in \Gamma$, 
with $\gamma\neq e$, $l(\gamma)>2c_{0}$. 
Then, by (4.5) 
\begin{align*}
 \sum_{\substack{\gamma\in\Gamma\\\gamma\neq e}} \lvert \tr K_{t}^{\tau_{s}(\sigma)}(\widetilde{x},\gamma\widetilde{x})\rvert
 \leq ct^{-d/2}e^{-c_{0}^{2}/4t}\sum_{\substack{\gamma\in\Gamma\\\gamma\neq e}}  e^{-\frac{(l(\gamma)^{2}-c_{0}^{2})}{4t}},
\end{align*}
and $l(\gamma)^{2}-c_{0}^2>\frac{1}{2}l(\gamma)^{2}$.
Hence, the series on the right-hand side converges and by (4.7) there exists a $C''>0$ such that for all $t$ with $0<t\leq T$,
\begin{align}
 \sum_{\substack{\gamma\in\Gamma\\\gamma\neq e}} \lvert \tr K_{t}^{\tau_{s}(\sigma)}(\widetilde{x},\gamma\widetilde{x})\rvert
 \leq C''t^{-d/2}e^{-c_{0}^{2}/4t}.
\end{align}
We use now Theorem 2.4 in \cite{BF} for the asymptotic expansion of the trace of the kernel $K^{\tau_{s}(\sigma)}_{t}$
as $t\rightarrow 0^{+}$. This theorem is proved for compact manifolds. 
However,  the proof of the existence of the asymptotic expansion is purely local. 
Hence, it holds  for the non-compact case as well.
Therefore, we have the following asymptotic expansion.
\begin{equation}
 \tr{K}^{\tau_{s}(\sigma)}_{t}(\widetilde{x},\widetilde{x})\sim_{t\rightarrow 0^{+}}a_{0}(\widetilde{x})t^{1/2}+O(t^{3/2},\widetilde{x}),
\end{equation}
where $a_{0}(\widetilde{x})$ is a smooth function determined by the total symbol 
of $\widetilde{D}(\sigma)$
 and $O(t^{3/2},\widetilde{x}))$ is uniform in $\widetilde{x}\in\widetilde{X}$.
Hence, by (4.3), (4.4), (4.8) and (4.9) we get
\begin{equation*}
  \tr K^{\tau_{s}(\sigma),\chi}_{t}(x,x)\sim_{t\rightarrow 0^{+}}\dim(V_{\chi})(a_{0}(\widetilde{x})t^{1/2}+O(t^{3/2},\widetilde{x})).
\end{equation*}
\end{proof}
\textbf{Remark:} We mention here that in Proposition 3.2 in \cite{Muller1998}, the representation of $K$
is irreducible. However, the irreducibility condition is not used in the proof.
Hence, this result can be extended to the case of the non-irreducible representation
$\tau_{s}(\sigma)$ of $K$.
\newline
\newline
We consider now the operator $e^{-t(A^{\sharp}_{\tau,\chi}+c(\sigma))}$ induced by each summand $A^{\sharp}_{\tau,\chi}+c(\sigma)$
in the definition (4.1) of $A^{\sharp}_{\chi}(\sigma)$.
\begin{lem}
There exist coefficients $c_{j}$ such that 
the asymptotic expansion of the trace of the kernel $H^{\tau,\chi}_{t}(x,y)$ of the operator
$e^{-t(A^{\sharp}_{\tau,\chi}+c(\sigma))}$  is given by 
\begin{equation}
\tr H^{\tau,\chi}_{t}(x,x)\sim_{t\rightarrow 0^{+}}\dim(V_{\chi})\sum_{j=0}^{\infty}c_{j}t^{\frac{j-d}{2}}.
\end{equation}
\end{lem}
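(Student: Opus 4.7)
The plan is to reduce to the standard short-time heat-kernel asymptotics for a Laplace-type operator, and to pick up the factor $\dim(V_{\chi})$ from the flatness of $E_{\chi}$. By the paragraph preceding the lemma, $A^{\sharp}_{\chi}(\sigma)$ is a second order elliptic differential operator on the bundle $E(\sigma)\otimes E_{\chi}$ whose principal symbol is the scalar $\lVert\xi\rVert^{2}\otimes\Id$. This is precisely the class of generalized Laplacians for which the classical local heat-kernel expansion (Gilkey-type) is available, so an expansion of the form $\sum_{j\geq 0}c_{j}(x)t^{j-d/2}$ on the diagonal is to be expected; the task is to extract the explicit factor $\dim(V_{\chi})$.

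First, I would analyse the kernel formula (3.3). Since the diagonal small-time asymptotics are a local invariant, only the term $\gamma=e$ contributes to the full asymptotic series: for $\gamma\neq e$ the element $g^{-1}\gamma g$ stays bounded away from the identity of $G$ by (twice) the injectivity radius of $X$, and the Gaussian off-diagonal decay of the heat kernel on $\widetilde{X}$, together with the Harish-Chandra Schwartz-space estimates on $Q^{\tau}_{t}$ already used in the paper, give $\lVert Q^{\tau}_{t}(g^{-1}\gamma g)\rVert=O(t^{\infty})$ uniformly for $x$ in a fundamental domain. Taking pointwise traces, the surviving term is
\begin{equation*}
\tr\bigl(e^{-tc(\sigma)}Q^{\tau}_{t}(e)\otimes\chi(e)\bigr)=\dim(V_{\chi})\,e^{-tc(\sigma)}\tr_{V_{\tau}}Q^{\tau}_{t}(e),
\end{equation*}
so the advertised factor $\dim(V_{\chi})$ appears automatically because $\chi(e)=\Id_{V_{\chi}}$.

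Next I would invoke the standard local heat-kernel expansion for the untwisted Laplace-type operator $\widetilde{A}_{\tau}$ on $\widetilde{E}_{\tau}$, giving
\begin{equation*}
\tr_{V_{\tau}}Q^{\tau}_{t}(e)\sim_{t\to 0^{+}}t^{-d/2}\sum_{j\geq 0}a_{j}(x)t^{j},
\end{equation*}
with $C^{\infty}$ coefficients $a_{j}(x)$ determined by the total symbol of $\widetilde{A}_{\tau}$. Multiplying by the entire factor $e^{-tc(\sigma)}=\sum_{k\geq 0}(-c(\sigma))^{k}t^{k}/k!$ merely recombines coefficients and produces new smooth coefficients $c_{j}(x)$, yielding (3.4).

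The only non-routine point is the justification that the $\gamma\neq e$ contributions are genuinely $O(t^{\infty})$ uniformly in $x$; this is where the Schwartz-space membership of $Q^{\tau}_{t}$ recalled in the paper is essential, but the argument is exactly parallel to the one underlying Lemma 3.1. Once this reduction is in place, the remainder is a direct application of standard invariance theory for Laplace-type operators.
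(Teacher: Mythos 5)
Your proof is correct and follows essentially the same route as the paper's: both reduce to the local heat-kernel expansion of the untwisted operator $\widetilde{A}_{\tau}$, extract $\dim(V_{\chi})$ from the flat tensor factor $\Id_{V_{\chi}}$, and absorb $e^{-tc(\sigma)}$ by its power series. You make explicit the localization step (discarding the $\gamma\neq e$ terms via Gaussian/Schwartz-space decay) that the paper only gestures at when it says "the assertion follows from equation (3.3)," which is a welcome clarification rather than a genuine departure.
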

\begin{proof}
Let $Q^{\tau}_{t}$
be the kernel associated with the operator
$e^{-t\widetilde{A}_{\tau}}$ (see \cite[p. 175-176]{Spilioti2018}).
Let $x,y\in X$. Let $\widetilde{x},\widetilde{y}\in \widetilde{X}$ be the lifts of $x,y$ to $\widetilde{X}$, respectively.
Let $R_{\gamma}$ be the following isomorphism:
$R_{\gamma}\colon (\widetilde{E}_{\tau})_{\widetilde{y}}\rightarrow(\widetilde{E}_{\tau})_{\gamma\widetilde{y}}$.
As in \cite[p. 175]{Spilioti2018},
the kernel $H^{\tau,\chi}_{t}$ of the operator  $e^{-t(A^{\sharp}_{\tau,\chi}+c(\sigma))}$
is given by
\begin{equation*}
H^{\tau,\chi}_{t}(x,y)= \sum_{\gamma\in\Gamma}e^{-tc(\sigma)}Q^{\tau}_{t}(\widetilde{x},\gamma\widetilde{y}) \circ(R_{\gamma}\otimes\chi(\gamma)).
\end{equation*}
Equivalently,
\begin{equation}
H^{\tau,\chi}_{t}(x,y)= e^{-tc(\sigma)}Q^{\tau}_{t}(\widetilde{x},\widetilde{y})\otimes \Id_{V_{\chi}}+
\sum_{\substack{\gamma\in\Gamma\\\gamma\neq e}}e^{-tc(\sigma)}
Q^{\tau}_{t}(\widetilde{x},\gamma\widetilde{y}) \circ(R_{\gamma}\otimes\chi(\gamma)).
\end{equation}
As in the proof of Lemma 4.1, one can prove that there exists a $c>0$ such that for $t\rightarrow 0^{+}$,
\begin{equation}
\sum_{\substack{\gamma\in\Gamma\\\gamma\neq e}}e^{-tc(\sigma)}
\lvert \tr Q^{\tau}_{t}(\widetilde{x},\gamma\widetilde{x})\rvert\leq ct^{-d/2}e^{-c_{0}^{2}/4t}e^{-tc(\sigma)}.
\end{equation}
By \cite[Lemma 1.7.4]{Gilk} the trace  of the kernel ${Q}^{\tau}_{t}$,
associated with the operator $e^{-t\widetilde{A}_{\tau}}$ has the asymptotic expansion
\begin{equation*}
 \tr{Q}^{\tau}_{t}(\widetilde{x},\widetilde{x})\sim_{t\rightarrow 0^{+}}\sum_{j=0}^{\infty}c_{j}(\widetilde{x})t^{\frac{j-d}{2}}.
\end{equation*}
where $c_{j}(\widetilde{x})$ are smooth functions determined by the total symbol of the operator $\widetilde{\Delta}_{\tau}$.
The proof of the existence of the asymptotic expansion is purely local. 
Therefore, it applies in our case as well.
Since $e^{-t\widetilde{A}_\tau}$ commutes with the action of $G$, and $G$ acts
transitively on $\widetilde{X}$, it follows that $Q_{t}^{\tau}(\widetilde {x},\widetilde {x})$ is independent of $\widetilde{x}$. 
Therefore, the coefficients $c_{j}(\widetilde{x})$
are also independent of $\widetilde{x}$.
Let $\widetilde{x}_{0}=eK\in\widetilde{X}$
be the base point.
Then, we have
\begin{equation}
\tr{Q}^{\tau}_{t}(\widetilde{x}_{0},\widetilde{x}_{0})\sim_{t\rightarrow 0^{+}}\sum_{j=0}^{\infty}c_{j}t^{\frac{j-d}{2}}.
\end{equation}
Moreover, one can use the expansion in power series of the term $e^{-tc(\sigma)}$.
By (4.11), (4.12) and (4.13), we get
\begin{equation*}
\tr H^{\tau,\chi}_{t}(x,x)\sim_{t\rightarrow 0^{+}}\dim(V_{\chi})\sum_{j=0}^{\infty}c_{j}
t^{\frac{j-d}{2}}
\end{equation*}
\end{proof}
We need the following definitions.
\begin{defi}
Let  $R_{\theta}:=\{\rho e^ {i\theta}: \rho\in[0,\infty]\}$. 
The angle $\theta\in[0,2\pi)$ is a principal angle for the elliptic operator $D^{\sharp}_{\chi}(\sigma)$ if 
\begin{equation*}
 \spec(\sigma_{D^{\sharp}_{\chi}(\sigma)}(x,\xi))\cap R_{\theta}=\emptyset,\quad \forall x\in X,\forall\xi\in T_{x}^{*}X,\xi\neq 0.
\end{equation*}
\end{defi}
\begin{defi}
Let $I\subset [0,2\pi)$.
Let $L_{I}$ be a solid angle defined by
\begin{equation*}
 L_{I}:=\{\rho e^ {i\theta}: \rho\in(0,\infty),\theta\in I\}.
\end{equation*}
The angle $\theta$ is an Agmon angle for the elliptic operator  $D^{\sharp}_{\chi}(\sigma)$, if it is a principal angle for $D^{\sharp}_{\chi}(\sigma)$ 
and there exists an $\varepsilon>0$ such that 
\begin{equation*}
 \spec(D^{\sharp}_{\chi}(\sigma))\cap L_{[\theta-\varepsilon,\theta+\varepsilon]}=\emptyset.
\end{equation*}
\end{defi}
We define here the eta function associated with non-self-adjoint operators with elliptic, self-adjoint principal symbol (see \cite{Gilkeysoviet}).

\begin{defi}\textbf{Eta function of $D^{\sharp}_{\chi}(\sigma)$}.
The principal symbol of $D^{\sharp}_{\chi}(\sigma)$
is self-adjoint. Hence, the angles $\pm \pi/2$ are principal angles for $D^{\sharp}_{\chi}(\sigma)$.
Since $D^{\sharp}_{\chi}(\sigma)$ possesses a principal angle, it also possesses an 
Agmon angle (see \cite[Section 3.10]{BK2}).
Let $\theta$ be an Agmon angle for $D^{\sharp}_{\chi}(\sigma)$.
Let $\spec(D^{\sharp}_{\chi}(\sigma))=\{\lambda_{k}\colon k\in \N\}$ be the spectrum of $D^{\sharp}_{\chi}(\sigma)$. It is a discrete subset of $\C$.

By \cite[\S I.6]{Mk}, the space $L^{2}(X,E_{\tau_{s}(\sigma)}\otimes E_{\chi})$ of square integrable sections 
of $E_{\tau_{s}(\sigma)}\otimes E_{\chi}$ is the closure of the algebraic direct sum of finite dimensional $D^{\sharp}_{\chi}(\sigma)$-invariant subspaces
\begin{equation}
L^{2}(X,E_{\tau_{s}(\sigma)}\otimes E_{\chi})=\overline{\bigoplus\limits_{k\geq 1} \Lambda_{k}},
\end{equation}
such that the restriction of $D^{\sharp}_{\chi}(\sigma)$ to $\Lambda_{k}$
has a unique eigenvalue $\lambda_{k}$
and $\lim_{k\rightarrow \infty}\vert \lambda_{k}\vert=\infty$.
In general, the sum (4.14) is not a sum of mutually orthogonal subspaces. 
The spaces $\Lambda_{k}$ are called the root vectors if $D^{\sharp}_{\chi}(\sigma)$ with eigenvalue 
$\lambda_{k}$.  The algebraic multiplicity $m_{k}$ of the the eigenvalue $\lambda_{k}$ is defined as the 
the dimension of the space $\Lambda_{k}$.

Denote by $\log_{\theta}\lambda_{k}$ the branch of the logarithm in $\C\textbackslash  R_{\theta}$ with 
$\theta<\text{Im}(\log_{\theta}\lambda_{k})<\theta+2\pi$.
Let $(\lambda_{k})_{\theta}:=e^{\log_{\theta}\lambda_{k}}$.
For Re$(s)\gg0$, we define the eta function $ \eta_{\theta}(s,D^{\sharp}_{\chi}(\sigma))$ of $D^{\sharp}_{\chi}(\sigma)$ by 
\begin{equation*}
 \eta_{\theta}(s,D^{\sharp}_{\chi}(\sigma)):=\sum_{\text{\Re}(\lambda_{k})>0} m_{k}(\lambda_{k})_{\theta}^{-s}-\sum_{\text{\Re}(\lambda_{k})<0} m_{k}(-\lambda_{k})_{\theta}^{-s}.
\end{equation*}
\end{defi}
Note that since the angles $\pm \pi/2$ are principal angles for $D^{\sharp}_{\chi}(\sigma)$, there are at most finitely many eigenvalues of $D^{\sharp}_{\chi}(\sigma)$ on or near the imaginary axis.
Hence, the eigenvalues  of $D^{\sharp}_{\chi}(\sigma)$ that are purely imaginary do not
contribute to the definition of the eta function. 

It has been shown by Grubb and Seeley (\cite[Theorem 2.7]{GS}) that $\eta_{\theta}(s,D^{\sharp}_{\chi}(\sigma))$ has a meromorphic continuation to the whole complex plane $\C$ with isolated simple poles and that
is regular at $s=0$. Moreover, the number $\eta_{\theta}(0,D^{\sharp}_{\chi}(\sigma))$ is independent of the Agmon angle $\theta$.
Hence, we write $\eta(0,D^{\sharp}_{\chi}(\sigma))$ instead of $\eta_{\theta}(0,D^{\sharp}_{\chi}(\sigma))$.
We give here a short description of the proof.

Let $\theta$ be an Agmon angle for $D^{\sharp}_{\chi}(\sigma)$. We assume that $0$ is not an
eigenvalue of $D^{\sharp}_{\chi}(\sigma)$.
To define the operator $D^{\sharp}_{\chi}(\sigma)^{-s}$, one has to use the contour $\Gamma_{\theta,\rho_{0}}$, described as in  \cite[p. 88]{Sh}. 
There exists a $\rho_{0}>0$ such that
\begin{equation*}
\spec(D^{\sharp}_{\chi}(\sigma))\cap\{z\in\C:\lvert z \rvert\leq 2\rho_{0}\}=\emptyset.
\end{equation*}
We consider the contour $\Gamma_{\theta,\rho_{0}}\subset\C$, defined as $\Gamma_{\theta,\rho_{0}}=\Gamma_{1}\cup\Gamma_{2}\cup\Gamma_{3}$, where 
$\Gamma_{1}=\{re^{i\theta}:\infty>r\geq \rho_{0}\}$, $\Gamma_{2}=\{\rho_{0}e^{i\beta}:\theta\leq \beta\leq \theta-2\pi\}$,
$\Gamma_{3}=\{re^{i(\theta-2\pi)}:\rho_{0}\leq r<\infty\}$. On $\Gamma_{1}$, $r$ runs from $\infty$ to $\rho_{0}$, $\Gamma_{2}$ is oriented counterclockwise, and on $\Gamma_{3}$,
$r$ runs from $\rho_{0}$ to $\infty$.
Then, for Re$(s)>0$ we define
\begin{equation*}
D^{\sharp}_{\chi}(\sigma)^{-s}:=\frac{i}{2\pi}\int_{\Gamma_{\theta,\rho_{0}}}\lambda^{-s}\big(D^{\sharp}_{\chi}(\sigma)-\lambda\Id\big)^{-1} d\lambda.
\end{equation*}
Let $\Pi_{>}$ (resp. $\Pi_{<}$) be the pseudo-differential projection whose image contains the span of all generalized eigenvectors of $D^{\sharp}_{\chi}(\sigma)$
corresponding to eigenvalues $\lambda$ with Re$(\lambda)>0$ (resp. Re$(\lambda)$<0) (for more details see \cite[Definition 6.16]{BK3}).
The zeta function $\zeta_{\theta}(s,P,D^{\sharp}_{\chi}(\sigma))$ is define for Re$(s)>d$ by,
\begin{equation*}
 \zeta_{\theta}(s,P,D^{\sharp}_{\chi}(\sigma)):=\Tr(P D^{\sharp}_{\chi}(\sigma)^{-s}),
\end{equation*}
where $P=\Pi_{>},\Pi_{<}$ (\cite[p. 38]{BK3}). 
Then, Definition 4.5 can be read as
\begin{equation*}
 \eta_{\theta}(s,D^{\sharp}_{\chi}(\sigma))=\zeta_{\theta}(s,\Pi_{>},D^{\sharp}_{\chi}(\sigma))-\zeta_{\theta}(s,\Pi_{<},D^{\sharp}_{\chi}(\sigma)).
\end{equation*}
The zeta function $\zeta_{\theta}(s,P,D^{\sharp}_{\chi}(\sigma))$ is define for Re$(s)>d$ by,
\begin{equation*}
 \zeta_{\theta}(s,P,D^{\sharp}_{\chi}(\sigma)):=\Tr(P D^{\sharp}_{\chi}(\sigma)^{-s}).
\end{equation*}
If we integrate by parts the integral above, the operator $(D^{\sharp}_{\chi}(\sigma)-\lambda\Id)^{-k}$
will occur. By (\cite[Theorem 2.7.]{GS}), for $k>d$, there exists an asymptotic expansion 
of the trace of the operator $P(D^{\sharp}_{\chi}(\sigma)-\lambda\Id)^{-k}$ as $|\lambda|\rightarrow\infty$:
\begin{equation*}
 \Tr(P(D^{\sharp}_{\chi}(\sigma)-\lambda\Id)^{-k})\sim\sum_{j=1}^{\infty}c_{j}\lambda^{d-j-k}+\sum_{l=1}^{\infty}(c_{l}'\log\lambda+c_{l}'')\lambda^{-k-l},
\end{equation*}
where the coefficients $c_{j}$ and $c_{l}'$ are determined from the symbols of $D^{\sharp}_{\chi}(\sigma)$ and $P$,
and the coefficients $c_{l}''$ are in general globally determined.

 Let $m_{+}$, respectively $m_{-}$, denote the the number of the eigenvalues of $D^{\sharp}_{\chi}(\sigma)$
on the positive, respectively negative, part of the imaginary axis.
We define the eta invariant $\eta(D^{\sharp}_{\chi}(\sigma))$ of the operator $D^{\sharp}_{\chi}(\sigma)$ by 
\begin{equation*}
\eta(D^{\sharp}_{\chi}(\sigma))=\frac{\eta(0,D^{\sharp}_{\chi}(\sigma))+m_{+}-m_{-}}{2}.
\end{equation*}

Let  now $\Pi_{+}$ be the projection on the span of the root spaces corresponding to eigenvalues $\lambda$ with Re$(\lambda_{k}^{2})>0$.
We define the functions
\begin{align*}
\eta_{0}(s,D^{\sharp}_{\chi}(\sigma)):=&\sum_{\substack{\text{Re}(\lambda_{k})>0\\ \text{Re}(\lambda_{k}^{2})\leq0}}m_{k}\lambda_{k}^{-s}-
\sum_{\substack{\text{Re}(\lambda_{k})<0\\ \text{Re}(\lambda_{k}^{2})\leq 0}}m_{k}\lambda_{k}^{-s}\\
\eta_{1}(s,D^{\sharp}_{\chi}(\sigma)):=&\sum_{\substack{\text{Re}(\lambda_{k})>0\\ \text{Re}(\lambda_{k}^{2})>0}}m_{k}\lambda_{k}^{-s}-
\sum_{\substack{\text{Re}(\lambda_{k})<0\\ \text{Re}(\lambda_{k}^{2})>0}}m_{k}\lambda_{k}^{-s}.\\
\end{align*}
By Definition 4.5, the eta function $\eta(s,D^{\sharp}_{\chi}(\sigma))$ satisfies the equation
\begin{equation*}
 \eta(s,D^{\sharp}_{\chi}(\sigma))=\eta_{0}(s,D^{\sharp}_{\chi}(\sigma))+\eta_{1}(s,D^{\sharp}_{\chi}(\sigma)).
\end{equation*}
Since the spectrum of $D^{\sharp}_{\chi}(\sigma)^{2}$ is discrete and contained in a translate of a positive cone in $\C$,
there are only finitely many eigenvalues of $D^{\sharp}_{\chi}(\sigma)$ with Re$(\lambda_{k}^{2})\leq 0$. 

\begin{lem}
 The eta function $\eta(s,D^{\sharp}_{\chi}(\sigma))$ satisfies the equation
 \begin{equation}
 \eta(s,D^{\sharp}_{\chi}(\sigma))= \eta_{0}(s,D^{\sharp}_{\chi}(\sigma))+\frac{1}
 {\Gamma(\frac{s+1}{2})}\int_{0}^{\infty}\Tr(\Pi_{+}D^{\sharp}_{\chi}(\sigma) e^{-tD^{\sharp}_{\chi}(\sigma)^{2}})t^{\frac{s-1}{2}}dt.
\end{equation}
\end{lem}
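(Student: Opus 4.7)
The plan is to show that the second summand in the right-hand side of the claimed identity is exactly $\eta_{1}(s,D^{\sharp}_{\chi}(\sigma))$, so that the decomposition $\eta = \eta_{0}+\eta_{1}$ recalled just before the statement gives the result. The workhorse is the elementary Mellin identity
\begin{equation*}
(\lambda^{2})^{-(s+1)/2}=\frac{1}{\Gamma((s+1)/2)}\int_{0}^{\infty}e^{-t\lambda^{2}}\, t^{(s-1)/2}\,dt,
\end{equation*}
valid for any $\lambda\in\C$ with $\RE(\lambda^{2})>0$, where the power $(\lambda^{2})^{-(s+1)/2}$ is the principal branch (well-defined since $\lambda^{2}$ lies in the right half-plane).

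First, multiplying both sides by $\lambda$ one verifies the branch-matching computation: for an eigenvalue $\lambda$ of $D^{\sharp}_{\chi}(\sigma)$ with $\RE(\lambda)>0$ and $\RE(\lambda^{2})>0$, one has $\lambda\cdot(\lambda^{2})^{-(s+1)/2}=\lambda^{-s}_{\theta}$, while for $\RE(\lambda)<0$ and $\RE(\lambda^{2})>0$ the same manipulation yields $\lambda\cdot(\lambda^{2})^{-(s+1)/2}=-(-\lambda)^{-s}_{\theta}$. These are precisely the two contributions appearing in Definition 3.5 with the correct signs, so after multiplying by algebraic multiplicities $m_{k}$ and summing over those eigenvalues with $\RE(\lambda^{2})>0$ (i.e., exactly those not captured by $\eta_{0}$) one gets
\begin{equation*}
\eta_{1}(s,D^{\sharp}_{\chi}(\sigma))=\frac{1}{\Gamma((s+1)/2)}\sum_{\RE(\lambda^{2})>0}m_{k}\int_{0}^{\infty}\lambda\,e^{-t\lambda^{2}}\,t^{(s-1)/2}\,dt.
\end{equation*}

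Second, one identifies the summed integrand with a trace. Since $(D^{\sharp}_{\chi}(\sigma))^{2}$ has spectrum in a translate of a positive cone (Figure 1), the heat operator $e^{-t(D^{\sharp}_{\chi}(\sigma))^{2}}$ is well defined and trace-class for $t>0$; composing with the bounded pseudodifferential projector $\Pi_{+}$ and the first-order operator $D^{\sharp}_{\chi}(\sigma)$ leaves a trace-class operator whose spectral expansion on the generalized eigenspaces gives $\Tr(\Pi_{+}D^{\sharp}_{\chi}(\sigma)e^{-t(D^{\sharp}_{\chi}(\sigma))^{2}})=\sum_{\RE(\lambda^{2})>0}m_{k}\lambda\,e^{-t\lambda^{2}}$.

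Third, one justifies the Fubini-type interchange of $\sum$ and $\int_{0}^{\infty}dt$ for $\RE(s)$ sufficiently large. Near $t=0$ the integrand is controlled by the small-time heat-kernel asymptotic expansion of Lemma 3.1 (the $t^{1/2}$-leading order, together with $t^{(s-1)/2}$, is integrable once $\RE(s)>0$, and in fact for $\RE(s)>d$ uniform bounds hold from the general $(D-\lambda\Id)^{-k}$ estimates already cited from Grubb–Seeley). For large $t$, the exponential decay $e^{-t\RE(\lambda^{2})}$ coming from the eigenvalues with $\RE(\lambda^{2})>0$—which, since only finitely many eigenvalues lie in $\RE(\lambda^{2})\leq 0$, is uniform away from a compact set—gives integrability and legitimates interchange. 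The main obstacle is precisely this analytic bookkeeping: keeping the branch of $\lambda^{-s}_{\theta}$ defined via the Agmon contour $\Gamma_{\alpha,\rho_{0}}$ consistent with the principal branch of $(\lambda^{2})^{-(s+1)/2}$, and combining the small-$t$ heat asymptotic with the large-$t$ cone decay to secure convergence for $\RE(s)\gg 0$; the stated identity then follows on this half-plane and extends meromorphically thereafter.
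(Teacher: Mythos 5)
Your proposal is correct and follows essentially the same route as the paper's proof: you use the Mellin identity $(\lambda^{2})^{-(s+1)/2}=\Gamma((s+1)/2)^{-1}\int_{0}^{\infty}e^{-t\lambda^{2}}t^{(s-1)/2}\,dt$, identify the resulting sum over eigenvalues with the trace $\Tr(\Pi_{+}D^{\sharp}_{\chi}(\sigma)e^{-t(D^{\sharp}_{\chi}(\sigma))^{2}})$ via Lidskii, split $\eta=\eta_{0}+\eta_{1}$, and justify the interchange of sum and integral by the small-$t$ heat asymptotics of Lemma 3.1 together with large-$t$ exponential decay from the spectral cone. The only difference is a matter of exposition: you spell out explicitly the branch-matching computation $\lambda\cdot(\lambda^{2})^{-(s+1)/2}=\lambda_{\theta}^{-s}$ (resp. $-(-\lambda)_{\theta}^{-s}$) according to the sign of $\RE(\lambda)$, which the paper handles more tersely by invoking a change of variables and Cauchy-theorem contour deformation; this is a helpful clarification but does not alter the argument.
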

\begin{proof} 
Let $\lambda_{k}$ be an eigenvalue of $D^{\sharp}_{\chi}(\sigma)$ such that Re$(\lambda_{k}^{2})>0$.
We have
\begin{equation*}
 (\lambda_{k}^{2})^{-\frac{s+1}{2}}=\frac{1}{\Gamma(\frac{s+1}{2})}\int_{0}^{\infty}e^{-\lambda_{k}^{2}t}t^{\frac{s-1}{2}}dt.
\end{equation*}
We mention here that we can use the Lidskii's theorem (\cite[Theorem 3.7, p. 35]{SB}) to express the trace of the operator 
$D^{\sharp}_{\chi}(\sigma) e^{-tD^{\sharp}_{\chi}(\sigma)^{2}}$ in terms of its eigenvalues $\lambda_{k}$
\begin{equation*}
 \Tr(D^{\sharp}_{\chi}(\sigma) e^{-tD^{\sharp}_{\chi}(\sigma)^{2}})=\sum_{\lambda_{k}\neq 0}m_{k}\lambda_{k}e^{-t\lambda_{k}^{2}}.
\end{equation*}
Taking the sum over the eigenvalues $\lambda_{k}$ of $D^{\sharp}_{\chi}(\sigma)$, counting also their algebraic multiplicities, we have
\begin{equation}
 \Tr (\Pi_{+}D^{\sharp}_{\chi}(\sigma)(D^{\sharp}_{\chi}(\sigma)^{2})^{-\frac{s+1}{2}})=
 \frac{1}{\Gamma(\frac{s+1}{2})}\int_{0}^{\infty}\Tr(\Pi_{+}D^{\sharp}_{\chi}(\sigma) e^{-tD^{\sharp}_{\chi}(\sigma)^{2}})t^{\frac{s-1}{2}}dt.
\end{equation}
To prove the convergence of the above integral, we first observe that 
\begin{align}
\Tr (\Pi_{+}D^{\sharp}_{\chi}(\sigma)(D^{\sharp}_{\chi}(\sigma)^{2})^{-\frac{s+1}{2}})\notag&=\int_{0}^{1}\Tr(\Pi_{+}D^{\sharp}_{\chi}(\sigma) 
e^{-tD^{\sharp}_{\chi}(\sigma)^{2}})t^{\frac{s-1}{2}}dt+\\
&\int_{1}^{\infty}\Tr(\Pi_{+}D^{\sharp}_{\chi}(\sigma) e^{-tD^{\sharp}_{\chi}(\sigma)^{2}})t^{\frac{s-1}{2}}dt.
\end{align}
For the first integral in the right-hand side of (4.17),
we use the asymptotic expansion of the trace of the operator $D^{\sharp}_{\chi}(\sigma) e^{-tD^{\sharp}_{\chi}(\sigma)^{2}}$.
By Lemma 4.1, we have
\begin{align}
\notag\int_{0}^{1}\Tr(\Pi_{+}D^{\sharp}_{\chi}(\sigma) e^{-tD^{\sharp}_{\chi}(\sigma)^{2}})t^{\frac{s-1}{2}}dt=
&\int_{0}^{1}\dim (V_{\chi})( \alpha_{0}t^{1/2}+O(t^{3/2}))t^{\frac{s-1}{2}}dt\\
&=\dim (V_{\chi}) \alpha_{0}\frac{2}{s+2}+\int_{0}^{1}O(t^{3/2})t^{\frac{s-1}{2}}dt,
\end{align}
which is a holomorphic function for $\text{\Re}(s)>-2$.
Here,
\begin{equation*}
 \alpha_{0}=\int_{X}a_{0}(x)d\mu(x),
\end{equation*}
where $a_{0}(x)$ is a smooth function, and $\mu(x)$ is the volume measure determined by the Riemannian metric on $X$.

We treat now the second integral in the right-hand side of (4.17).
Since there are finitely many eigenvalues $\lambda_{k}$
with $\vert\lambda_{k}\vert<1$, we have for $t\geq 1$,
\begin{equation}
\Big|\sum_{\text{Re}(\lambda_{k}^{2})>0}m_{k} \lambda_{k} e^{-t\lambda_{k}^{2}}\Big|
 \leq C \sum_{\text{Re}(\lambda_{k}^{2})>0} m(\lambda_{k}^{2})\vert\lambda_{k}^{2} \vert e^{-t\text{Re}(\lambda_{k}^{2})},
\end{equation}
where $C$ is a positive constant.
We recall here  that the spectrum of $D^{\sharp}_{\chi}(\sigma)^{2}$ is discrete and contained 
in a translate of a positive cone in $\C$.
We set $c_{0}:=\frac{1}{2}\min\{\text{Re}(\lambda_{k}^{2})\colon\text{Re}(\lambda_{k}^{2})>0\}$. Then, we have for $t\geq 1$,
\begin{align}
\sum_{\text{Re}(\lambda_{k}^{2})>0} m(\lambda_{k}^{2})\vert\lambda_{k}^{2} \vert e^{-t\text{Re}(\lambda_{k}^{2})}
 \leq  e^{-c_{0}t/2}  \sum_{\text{Re}(\lambda_{k}^{2})>0} m(\lambda_{k}^{2})\vert\lambda_{k}^{2} \vert e^{-\text{Re}(\lambda_{k}^{2})/2}.
\end{align}
We use now Weyl's law  for the non-self-adjoint operator 
$D^{\sharp}_{\chi}(\sigma)^{2}$ to estimate the last sum. 
Let $r$ be a positive constant. 
We define the counting function $\mathcal{N}(r)$ by
\begin{equation*}
 \mathcal{N}(r):=\sum_{\substack{\lambda^{2}\in\spec(D^{\sharp}_{\chi}(\sigma)^{2})\\{|\lambda^{2}|\leq r}}} m(\lambda^{2}).
\end{equation*}
In \cite{M1}, the generalization of  Weyl's law for the non-self-adjoint case is proved.
By \cite[Lemma 2.2]{M1}, we have
\begin{equation*}
 \mathcal{N}(r)=\frac{\rank(\rank(E(\tau_{s}(\sigma)\otimes E_{\chi}))\Vol(X)}{(4\pi)^{d/2}\Gamma(d/2+1)}r^{d/2}+o(r^{d/2}), \quad r\rightarrow \infty,
\end{equation*}
where $\rank(E(\tau_{s}(\sigma)\otimes E_{\chi})$ denotes the rank of the product vector bundle $E(\tau_{s}(\sigma))\otimes E_{\chi}$. 
Let  $a>0$ be the slope of the boundary of the cone, in which all the eigenvalues $\lambda_{j}^{2}$ of $D^{\sharp}_{\chi}(\sigma)^{2}$ are contained. 
We have
\begin{equation*}
 \sharp\{j\colon|\text{Re}(\lambda_{j}^{2})| \leq r\}  \leq \sharp\{j\colon|\lambda_{j}^{2}|\leq \sqrt{1+a^{2}} r\}\leq \mathcal{N}(\sqrt{1+a^{2}}r).
\end{equation*}
Hence, we have
\begin{align}
 \sum_{\text{Re}(\lambda_{k}^{2})>0} m(\lambda_{k}^{2})\vert\lambda_{k}^{2} \vert e^{-\text{Re}(\lambda_{k}^{2})/2}
 < \infty.
\end{align}
By (4.19), (4.20) and (4.21), there exists a positive constant $c$ such that 
\begin{equation*}
 \Big|\sum_{\text{Re}(\lambda_{k}^{2})>0}m_{k} \lambda_{k} e^{-t\lambda_{k}^{2}}\Big|\leq 
 ce^{-c_{0}t/2}.
\end{equation*}
Therefore,
\begin{equation}
 \int_{1}^{\infty} \lvert\Tr(\Pi_{+}D^{\sharp}_{\chi}(\sigma) e^{-tD^{\sharp}_{\chi}(\sigma)^{2}})
 t^{\frac{s-1}{2}}\rvert dt\leq c\int_{1}^{\infty}e^{\frac{-c_{0}t}{2}} t^{\text{\Re}(\frac{s}{2})-\frac{1}{2}}dt<\infty.
\end{equation}
By (4.17), (4.18), and (4.22), it follows that for $\text{\Re}(s)>-2$,
\begin{equation*}
 \eta(s,D^{\sharp}_{\chi}(\sigma))=\eta_{0}(s,D^{\sharp}_{\chi}(\sigma))+\frac{1}{\Gamma(\frac{s+1}{2})}\int_{0}^{\infty}
 \Tr(\Pi_{+}D^{\sharp}_{\chi}(\sigma) e^{-tD^{\sharp}_{\chi}(\sigma)^{2}})t^{\frac{s-1}{2}}dt.
\end{equation*}
\end{proof}

\section{Functional equations for the Selberg zeta function}

The meromorphic continuation of the Selberg and Ruelle zeta function 
associated with non-unitary representations of the subgroup 
$\Gamma$ is proved in \cite{Spilioti2018}.
Our aim is to derive the functional equations for the Selberg zeta function in case (a), and for the symmetrized and super zeta function in case (b). We recall here the formulas proved in \cite{Spilioti2018} (see (5.1) and (5.2) below), using the generalized resolvent identity and the trace formulas. 
More precisely, let $N\in\N$ and $i=1,\ldots,N$. Let $s_{i}$ be complex numbers with $s_{i}^2\neq s_{j}^2$ for all $i\neq j$. Let $\text{\Re}(s_{i}^2) \gg 0$.
We have the following expressions
\begin{align*}
R(s_{i}^{2})=(A_{\chi}^{\sharp}(\sigma)+s_{i}^{2})^{-1}&=\int_{0}^{\infty}e^{-ts_{i}^{2}}e^{-t{A_{\chi}^{\sharp}(\sigma)}}dt;\\
 D^{\sharp}_{\chi}(\sigma)R(s_{i}^{2})=D^{\sharp}_{\chi}(\sigma){{(D^{\sharp}_{\chi}(\sigma)}^{2}+s_{i}^{2})}^{-1}&=\int_{0}^{\infty}e^{-ts_{i}^{2}}D^{\sharp}_{\chi}(\sigma)
 e^{-t{D^{\sharp}_{\chi}(\sigma)}^{2}}dt\\
\end{align*}
(recall that ${D^{\sharp}_{\chi}(\sigma)}^{2}=A_{\chi}^{\sharp}(\sigma)$).
We choose $N$ such that $N>\frac{d}{2}+1$. Then, the operators 
$D^{\sharp}_{\chi}(\sigma)R(s_{i}^{2})$ and $R(s_{i}^{2})$ are trace class.
By \cite[Lemma 7.5]{Spilioti2018}, for $N>\frac{d}{2}$, the operator $\prod_{i=1}^{N}R(s_{i}^{2})$ is trace class,
and for $N>\frac{d}{2}+1$, the operator $D^{\sharp}_{\chi}(\sigma)\prod_{i=1}^{N}R(s_{i}^{2})$ is trace class.
We use the generalized resolvent identity as in Lemma 3.5 in  \cite{BO}. In our case this lemma reads
\begin{align*}
 \prod_{i=1}^{N}R(s_{i}^{2})&=
\sum_{i=1}^{N}\bigg(\prod_{\substack{j=1\\ j\neq i}}^{N}\frac{1}{s_{j}^{2}-s_{i}^{2}}\bigg)R(s_{i}^{2});\\
D^{\sharp}_{\chi}(\sigma)  \prod_{i=1}^{N}R(s_{i}^{2})&=D^{\sharp}_{\chi}(\sigma)\sum_{i=1}^{N}\bigg(\prod_{\substack{j=1\\ j\neq i}}^{N}
\frac{1}{s_{j}^{2}-s_{i}^{2}}\bigg)R(s_{i}^{2}).
\end{align*}
Hence, if we take into account the integral representations of the resolvent operators above, we have
\begin{align*}
  \prod_{i=1}^{N}(A_{\chi}^{\sharp}(\sigma)+s_{i}^{2})^{-1}&=
\int_{0}^{\infty}\sum_{i=1}^{N}\bigg(\prod_{\substack{j=1\\ j\neq i}}^{N}\frac{1}{s_{j}^{2}-s_{i}^{2}}\bigg)e^{-ts_{i}^{2}}e^{-t{A_{\chi}^{\sharp}(\sigma)}}dt;\\
 D^{\sharp}_{\chi}(\sigma)\prod_{i=1}^{N}({D^{\sharp}_{\chi}(\sigma)}^{2}+s_{i}^{2})^{-1}&=\int_{0}^{\infty}\sum_{i=1}^{N}\bigg(\prod_{\substack{j=1\\ j\neq i}}^{N}\frac{1}{s_{j}^{2}-s_{i}^{2}}\bigg)e^{-ts_{i}^{2}}
  D^{\sharp}_{\chi}(\sigma)e^{-t{D^{\sharp}_{\chi}(\sigma)}^{2}}dt.
\end{align*}
We plug now the trace of the corresponding operators and use the trace formulas  \cite[eq. (7.27) and (7.34)]{Spilioti2018}). 
\newline
\newline
\textbf{Remark:} In the trace formulas (7.27) and (7.34) in \cite{Spilioti2018},
the traces of the operators are super traces, corresponding to the grading of the locally homogeneous vector bundle $E(\sigma)$ over $X$, defined as in Section 4. For more details about the grading of  
$E(\sigma)$, we refer the reader to \cite[p. 27, 29]{BO} and \cite[p. 175]{Spilioti2018}).
We denote the super trace of the corresponding operators by $\Tr_{s}$.
By \cite[eq. (7.27) and (7.34)]{Spilioti2018}), we have
\begin{itemize}
 \item \textbf{case (a)}
 \begin{align}
\Tr _{s}\prod_{i=1}^{N}(A_{\chi}^{\sharp}(\sigma)+s_{i}^{2})^{-1}\notag&=\sum_{i=1}^{N}\bigg(\prod_{\substack{j=1\\ j\neq i}}^{N}\frac{1}{s_{j}^{2}-s_{i}^{2}}\bigg)\frac{\pi}{s_{i}} \dim(V_{\chi})\Vol(X)P_{\sigma}(s_{i})\\
&+\sum_{i=1}^{N}\frac{1}{2s_{i}}\bigg(\prod_{\substack{j=1\\ j\neq i}}^{N}\frac{1}{s_{j}^{2}-s_{i}^{2}}\bigg)L(s_{i}).
 \end{align}
\item \textbf{case (b)}
\begin{align}
 \label{eq mer sym}
\Tr _{s}\prod_{i=1}^{N}(A_{\chi}^{\sharp}(\sigma)+s_{i}^{2})^{-1}\notag&=\sum_{i=1}^{N}\bigg(\prod_{\substack{j=1\\ j\neq i}}^{N}\frac{1}{s_{j}^{2}-s_{i}^{2}}\bigg)\frac{\pi}{s_{i}} 2\dim(V_{\chi})\Vol(X)P_{\sigma}(s_{i})\\
 &+\sum_{i=1}^{N}\bigg(\prod_{\substack{j=1\\ j\neq i}}^{N}\frac{1}{s_{j}^{2}-s_{i}^{2}}\bigg)\frac{1}{2s_{i}}L_{S}(s_{i}).
 \end{align}
\end{itemize}

We consider first case (a).
\begin{lem}
 The logarithmic derivative $L(s)$ of the Selberg zeta function 
 satisfies the following functional equation
\begin{equation}
L(s)+L(-s)=-4\pi \dim(V_{\chi}) \Vol(X) P_{\sigma}(s).
\end{equation}
\end{lem}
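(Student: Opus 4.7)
The plan is to apply the Selberg trace formula for non-unitary twists (see \cite{M1,Spil,spil1}) to a one-parameter family of test functions indexed by $s$, identify $L(s)$ with the hyperbolic side of the resulting identity, and then exploit the $s \to -s$ antisymmetry to isolate the Plancherel contribution.

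First, I would differentiate the defining Euler product (2.1) logarithmically. Using the identity $\sum_{k \ge 0} \tr(S^k(A))\,t^k = 1/\det(I-tA)$ to resum the inner $k$-product inside the logarithm, the result is a sum over hyperbolic conjugacy classes $[\gamma]$ and their powers $\gamma^n$, weighted by $l(\gamma_0)\,e^{-(s+|\rho|)n l(\gamma_0)}$ together with traces and determinants built from $\chi(\gamma)$, $\sigma(m_\gamma)$ and $\Ad(m_\gamma a_\gamma)|_{\overline{\mathfrak{n}}}$. This matches precisely the hyperbolic contribution in the Selberg trace formula applied to $A_{\chi}^{\sharp}(\sigma)$ with a resolvent-type test function $h_{s}(r)=2s/(s^{2}+r^{2})$, which is the Mellin transform of the heat kernel of $A_{\chi}^{\sharp}(\sigma)$ already used in \cite{Spil,spil1} to prove meromorphic continuation.

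Next, I would invoke the trace formula for non-unitary twists to rewrite this hyperbolic sum as the spectral side minus the identity side:
\begin{equation*}
L(s) \;=\; 2s\!\!\sum_{\lambda \in \spec(A_{\chi}^{\sharp}(\sigma))} \frac{m(\lambda)}{s^{2}+\lambda} \;-\; I(s),
\end{equation*}
where $I(s)$ is the identity contribution, obtained by integrating $h_{s}$ against the Plancherel density $P_{\sigma}(r)$ for the principal series of $G=\Spin(d,1)$ with $M$-type $\sigma$, together with the normalization $\dim(V_{\chi})\Vol(X)$ characteristic of the non-unitary twist. A residue computation of $\int_{\R}\frac{2s\,P_{\sigma}(r)}{s^{2}+r^{2}}\,dr$, using that $P_{\sigma}$ is a polynomial, identifies $I(s)=2\pi\dim(V_{\chi})\Vol(X)\,P_{\sigma}(s)$.

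Finally, I form $L(s)+L(-s)$. The spectral sum depends on $s$ only through the overall odd factor $2s$, so the sum with its $s\to -s$ counterpart vanishes. Since the Plancherel polynomial $P_{\sigma}$ is an even polynomial (a standard feature of principal series Plancherel densities for $\Spin(d,1)$), $I(s)$ is even in $s$ and $I(s)+I(-s)=2I(s)$, yielding
\begin{equation*}
L(s)+L(-s) \;=\; -2I(s) \;=\; -4\pi\dim(V_{\chi})\Vol(X)\,P_{\sigma}(s),
\end{equation*}
as desired. The main obstacles are technical: justifying the interchange of differentiation and summation for $\Re(s)$ sufficiently large (together with analytic continuation from \cite{Spil}), and pinning down the exact normalization of $I(s)$, which requires the explicit form of the Plancherel density for $\Spin(d,1)$ and a careful tracking of the $\chi$-factor through M\"uller's twisted trace formula \cite{M1} as adapted in \cite{Spil,spil1}.
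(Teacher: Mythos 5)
Your strategy --- trace formula on the resolvent side, then exploit the odd parity of the spectral term and the even parity of the Plancherel polynomial --- is the same idea the paper uses, and your final parity computation is correct. However, the intermediate identity you propose,
\begin{equation*}
L(s) \;=\; 2s\sum_{\lambda\in\spec(A_{\chi}^{\sharp}(\sigma))}\frac{m(\lambda)}{s^{2}+\lambda}\;-\;I(s),
\end{equation*}
is not well defined in the setting of the paper. By the Weyl law for $A_{\chi}^{\sharp}(\sigma)$ (cf.\ the paper's Lemma 6.3 and \cite[Lemma 2.2]{M1}), the counting function grows like $c^{d/2}$, so the spectral sum $\sum m(\lambda)/(s^{2}+\lambda)$ diverges for every odd $d\geq 3$; a single resolvent of $A_{\chi}^{\sharp}(\sigma)$ is not trace class. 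The same divergence infects your ``residue computation'' $\int_{\R}\frac{2s\,P_{\sigma}(r)}{s^{2}+r^{2}}\,dr$, since $P_{\sigma}$ has degree $d-1$. This is more than an ``interchange of differentiation and summation'' issue, and it cannot be repaired just by analytic continuation of $L$; the right-hand side you wrote down has no meaning to continue from.

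The paper's proof sidesteps exactly this by starting from the $N$-fold resolvent identity \cite[equation 6.15]{Spil} (equation (4.2) here), in which the spectral term appears as
\begin{equation*}
\sum_{\lambda_{k}}\sum_{i=1}^{N}\bigg(\prod_{\substack{j=1\\ j\neq i}}^{N}\frac{1}{s_{j}^{2}-s_{i}^{2}}\bigg)\frac{m(t_{k})}{t_{k}+s_{i}^{2}},
\end{equation*}
with $N$ chosen large enough ($N>d/2$) that the product $\prod_{i}(A_{\chi}^{\sharp}(\sigma)+s_{i}^{2})^{-1}$ is trace class. Fixing $s_{2},\dots,s_{N}$, letting $s_{1}=s$ vary, and sending $s\mapsto -s$, the spectral term is manifestly even in $s$ (it depends on $s$ only through $s^{2}$) and cancels upon subtraction; the Plancherel term doubles because $P_{\sigma}$ is even; the remaining finite product factors out. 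To make your argument rigorous you should work with this regularized identity rather than the formal single-resolvent one, and then observe that both the divergent spectral piece and the fixed-$s_{j}$ terms drop out of $L(s)+L(-s)$ \emph{before} any need to assign meaning to the individual sums.
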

\begin{proof}
We fix the complex numbers $s_{2},\ldots,s_{N}\in\C$ and let $s_{1}=s\in\C$ vary. 
The left-hand side of (5.1) is invariant under $s\mapsto -s$.
Hence, the right-hand side of (5.1) is also invariant under $s\mapsto -s$.
We have
\begin{align*}
&\bigg(\prod_{\substack{j=2\\ j\neq i}}^{N}\frac{1}{s_{j}^{2}-s^{2}}\bigg)\frac{1}{2s}L(s)\mapsto \bigg(\prod_{\substack{j=2\\ j\neq i}}^{N}\frac{1}{s_{j}^{2}-s^{2}}\bigg)\frac{1}{-2s}L(-s).\\
\end{align*}
Since the Plancherel polynomial
is an even polynomial of $s$ 
\begin{align*}
-\bigg(\prod_{\substack{j=2\\ j\neq i}}^{N}\frac{1}{s_{j}^{2}-s^{2}}\bigg)\frac{\pi}{s}\dim(V_{\chi})\Vol(X)P_{\sigma}(s)\mapsto
 \bigg(\prod_{\substack{j=2\\ j\neq i}}^{N}\frac{1}{s_{j}^{2}-s^{2}}\bigg)\frac{\pi}{s}\dim(V_{\chi})\Vol(X)P_{\sigma}(s).\\\notag
\end{align*}
We subtract the resulting equation from (5.1).
Hence,
\begin{equation*}
\bigg(\prod_{\substack{j=2\\ j\neq i}}^{N}\frac{1}{s_{j}^{2}-s^{2}}\bigg)\frac{1}{2s}(L(s)+L(-s))
=-\bigg(\prod_{\substack{j=2\\ j\neq i}}^{N}\frac{1}{s_{j}^{2}-s^{2}}\bigg)\frac{2\pi}{s}\dim(V_{\chi})\Vol(X)P_{\sigma}(s).
\end{equation*}
We multiply the above equation by the function 
$
 2s\prod_{j=2}^{N}(s_{j}^{2}-s^{2}).
$
Then, we get
\begin{equation*}
L(s)+L(-s)=-4\pi\dim(V_{\chi})\Vol(X)P_{\sigma}(s).
\end{equation*}
\end{proof}
\begin{thm}
 The Selberg zeta function $Z(s;\sigma,\chi)$ satisfies the following functional equation
\begin{equation}
 \frac{Z(s;\sigma,\chi)}{Z(-s;\sigma,\chi)}=\exp\bigg(-4\pi\dim(V_{\chi})\Vol(X)\int_{0}^{s}P_{\sigma}(r)dr\bigg).
\end{equation}
\end{thm}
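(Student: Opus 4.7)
The plan is straightforward once Lemma 4.1 is in hand: that lemma identifies $(\log[Z(s;\sigma,\chi)/Z(-s;\sigma,\chi)])'$ with a polynomial, and Theorem 4.2 is obtained by a single integration together with the determination of the constant of integration.

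Concretely, I would set $F(s) := Z(s;\sigma,\chi)/Z(-s;\sigma,\chi)$, which is a priori meromorphic on $\C$. The chain rule (combined with $\frac{d}{ds}\log Z(-s;\sigma,\chi) = -L(-s)$) gives
$$\frac{F'(s)}{F(s)} \;=\; L(s) + L(-s),$$
and Lemma 4.1 identifies this sum with the entire function $-4\pi\dim(V_\chi)\Vol(X)P_\sigma(s)$. Since $F'/F$ is holomorphic on all of $\C$, the residues of $L(s)+L(-s)$ at every point $s_0$ must vanish, and a direct local computation shows that this forces the order of $Z(\cdot;\sigma,\chi)$ at $s_0$ to equal its order at $-s_0$. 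Hence the zeros and poles in the numerator and denominator of $F$ cancel exactly, $F$ extends to an entire nowhere-vanishing function, and a single-valued branch of $\log F$ exists globally on $\C$. Integrating the identity $(\log F)'(s) = -4\pi\dim(V_\chi)\Vol(X)P_\sigma(s)$ from $0$ to $s$ along any path yields
$$\log F(s) \;=\; \log F(0) \;-\; 4\pi\dim(V_\chi)\Vol(X)\int_0^s P_\sigma(r)\,dr,$$
and the constant $\log F(0)$ is fixed by the tautology $F(s)F(-s)\equiv 1$, which gives $F(0)^2 = 1$; continuity through the origin together with the matching of leading Laurent coefficients of $Z(s;\sigma,\chi)$ and $Z(-s;\sigma,\chi)$ at $s=0$ selects the branch $F(0)=+1$, so $\log F(0)=0$. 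Exponentiation then gives the claimed identity.

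The only point that needs care is the determination of the sign of $F(0)$, which ultimately reduces to verifying that the leading Laurent coefficients of $Z(\pm s;\sigma,\chi)$ at $s=0$ agree (equivalently, that any order of vanishing at the origin is even, so the factor $(-1)^m$ introduced by the substitution $s\mapsto -s$ does not spoil the formula). Everything else is mechanical: Lemma 4.1 has already done the spectral work of identifying the symmetrized logarithmic derivative with the Plancherel polynomial, and Theorem 4.2 is its integrated form.
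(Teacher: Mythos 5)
Your approach follows the paper's essentially verbatim: integrate the identity $L(s)+L(-s)=-4\pi\dim(V_\chi)\Vol(X)P_\sigma(s)$ of Lemma~4.1 from $0$ to $s$ and exponentiate. The paper compresses this into a single sentence; you spell out the intermediate steps. What you add, and what the paper omits entirely, is an explicit treatment of the constant of integration, and here you correctly surface a genuine loose end which you flag but do not actually close. You observe that $F(s)F(-s)\equiv 1$ only pins down $F(0)\in\{\pm 1\}$, that $F(0)=(-1)^{m}$ where $m$ is the order of vanishing of $Z(\cdot;\sigma,\chi)$ at $s=0$, and that the theorem therefore needs $m$ even; but you then say this ``ultimately reduces to verifying'' that the leading Laurent coefficients of $Z(\pm s)$ agree at $0$, which is a restatement of the needed fact, not a derivation of it. In particular, the parity of $m$ cannot be extracted from Lemma~4.1 alone: near $s=0$ the residue of $L(s)$ is $m$ while that of $L(-s)$ is $-m$, so $L(s)+L(-s)$ is regular at $0$ \emph{whatever} the parity of $m$. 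Entirety of $L(s)+L(-s)$ does force the orders of $Z$ at $\pm s_0$ to agree for $s_0\neq 0$, but it says nothing at $s_0=0$.

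To close the gap one must step back to the resolvent identity (4.2) (i.e.\ [Spil, eq.~6.15]) underlying Lemma~4.1: there the spectral side of $\tfrac{1}{2s}L(s)$ is $\sum_k m(t_k)/(t_k+s^2)$, so the singular part of $L(s)$ at $s=0$ comes only from the eigenvalue $t_k=0$, contributing $\tfrac{2s\,m(0)}{s^2}=\tfrac{2m(0)}{s}$; hence $m=2m(0)$ is automatically even and $F(0)=1$. (Equivalently, the determinant formula of Theorem~6.6 --- which does not use the present theorem --- exhibits $Z(s)$ as $\det(A_\chi^\sharp(\sigma)+s^2)$ times a nonvanishing factor, and the determinant is manifestly a function of $s^2$, so again $m$ is even.) With that one sentence supplied, your proof is complete. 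The paper, for its part, is silent on this point: its ``integrate once and exponentiate'' implicitly assumes $Z(0)\neq 0$, in which case the two $\log Z(0)$ terms cancel trivially under the integration.
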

\begin{proof}
 We integrate over $s$ and exponentiate equation (5.3). The assertion follows.
\end{proof}

We treat now case (b).
\begin{lem}
The logarithmic derivative $L_{S}(s)$ of the symmetrized zeta function 
satisfies the following functional equation
\begin{equation}
 L_{S}(s)+L_{S}(-s)=-8\pi \dim(V_{\chi}) \Vol(X) P_{\sigma}(s).
\end{equation}
\end{lem}
\begin{proof}
We consider (5.2) at $s\mapsto -s$. We subtract the resulting equation from (5.2).
Using the same argument as in Lemma 5.1, we have
\begin{equation*}
 \bigg(\prod_{j=2}^{N}\frac{1}{s_{j}^{2}-s^{2}}\bigg)\frac{1}{2s}(L_{S}(s)+L_{S}(-s))=-\bigg(\prod_{j=2}^{N}\frac{1}{s_{j}^{2}-s^{2}}\bigg)\frac{4\pi}{s}\dim(V_{\chi})\Vol(X)P_{\sigma}(s)
\end{equation*}
We multiply the above equation by the function 
\begin{equation*}
 2s\prod_{j=2}^{N}(s_{j}^{2}-s^{2}),
\end{equation*}
and get equation (5.5).
\end{proof}
\begin{thm}
 The symmetrized zeta function satisfies the following functional equation
\begin{equation}
 \frac{S(s;\sigma,\chi)}{S(-s,\sigma,\chi)}=\exp\bigg(-8\pi\dim(V_{\chi})\Vol(X)\int_{0}^{s}P_{\sigma}(r)dr\bigg).
\end{equation}
\end{thm}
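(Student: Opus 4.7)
My plan is to derive Theorem 4.4 as an immediate consequence of Corollary 4.3 by integrating the functional equation for the logarithmic derivative and then exponentiating, exactly parallel to the passage from Lemma 4.1 to Theorem 4.2. The logarithmic derivative of $S(s;\sigma,\chi)$ is by definition $L_S(s) = \frac{d}{ds}\log S(s;\sigma,\chi)$, so the fundamental theorem of calculus yields $\int_{0}^{s} L_S(r)\,dr = \log S(s;\sigma,\chi) - \log S(0;\sigma,\chi)$ on any simply connected domain avoiding zeros and poles.

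Next, I would handle the $L_S(-s)$ term by the change of variables $u = -r$, giving $\int_{0}^{s} L_S(-r)\,dr = -\int_{0}^{-s} L_S(u)\,du = -\bigl(\log S(-s;\sigma,\chi) - \log S(0;\sigma,\chi)\bigr)$. Adding the two identities, the constant terms involving $\log S(0;\sigma,\chi)$ cancel and one obtains
\begin{equation*}
\int_{0}^{s}\bigl(L_S(r)+L_S(-r)\bigr)\,dr = \log S(s;\sigma,\chi) - \log S(-s;\sigma,\chi).
\end{equation*}

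Combining this with Corollary 4.3, which states that $L_S(s)+L_S(-s) = -8\pi\dim(V_{\chi})\Vol(X)P_{\sigma}(s)$, gives
\begin{equation*}
\log\frac{S(s;\sigma,\chi)}{S(-s;\sigma,\chi)} = -8\pi\dim(V_{\chi})\Vol(X)\int_{0}^{s}P_{\sigma}(r)\,dr,
\end{equation*}
and exponentiating yields the claim.

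I expect no serious obstacle here, since Corollary 4.3 already encodes all the analytic content (the Plancherel polynomial data and the cancellation with the spectral side of the trace formula). The only minor subtleties are (i) choosing a path from $0$ to $\pm s$ in the complex plane that avoids the discrete zero/pole set of $S(\,\cdot\,;\sigma,\chi)$ so that $\log S$ is well-defined along it, and (ii) noting that since the meromorphic identity is to be established globally on $\mathbb{C}$, local validity on such a path combined with the meromorphic continuation proved in \cite{spil1} extends it everywhere by analytic continuation. Both points are handled exactly as in the proof of Theorem 4.2.
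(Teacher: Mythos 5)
Your proposal is correct and follows the paper's own argument: the paper's proof of this theorem is precisely the one-line statement "We integrate over $s$ and exponentiate equation (4.4)," which your argument spells out in detail, including the change of variables for the $L_S(-s)$ term and the cancellation of the $\log S(0;\sigma,\chi)$ constants. Your remarks about path choice and analytic continuation are sensible elaborations of what the paper leaves implicit.
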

\begin{proof}
We integrate over $s$ and exponentiate equation (5.5).
The assertion follows.
\end{proof}
\begin{thm}
 The super zeta function satisfies the functional equation
\begin{equation}
Z^{s}(s;\sigma,\chi)Z^{s}(-s;\sigma,\chi)=e^{2\pi i \eta(0,D^{\sharp}_{\chi}(\sigma))},
\end{equation}
where $\eta(0,D^{\sharp}_{\chi}(\sigma))$ is the function $\eta(s,D^{\sharp}_{\chi}(\sigma))$ as in (4.14) in Lemma 4.6, at $s=0$.
Furthermore,
\begin{equation}
 Z^{s}(0;\sigma,\chi)=e^{\pi i \eta(0,D^{\sharp}_{\chi}(\sigma))}.
\end{equation}
\end{thm}
\begin{proof}
By \cite[Proposition 7.7]{Spilioti2018}, we have
\begin{equation*}
 \Tr(D^{\sharp}_{\chi}(\sigma)\prod_{i=1}^{N}(D^{\sharp}_{\chi}(\sigma)^{2}+s_{i}^{2})^{-1})=\frac{-i}{2}
 \sum_{i=1}^{N}\bigg(\prod_{\substack{j=1\\ j\neq i}}^{N}\frac{1}{s_{j}^{2}-s_{i}^{2}}\bigg)L^{s}(s_{i}).
\end{equation*}
Equivalently, for $\text{Re}(s_{i}^2)\gg 0$, we get
\begin{align*}
 \sum_{i=1}^{N}\bigg(\prod_{\substack{j=1\\ j\neq i}}^{N}\frac{1}{s_{j}^{2}-s_{i}^{2}}\bigg)L^{s}(s_{i})&=2i
 \Tr(D^{\sharp}_{\chi}(\sigma)\prod_{i=1}^{N}(D^{\sharp}_{\chi}(\sigma)^{2}+s_{i}^{2})^{-1})\\
 &=2i\Tr\int_{0}^{\infty}\sum_{i=1}^{N}\bigg(\prod_{\substack{j=1\\ j\neq i}}^{N}\frac{1}{s_{j}^{2}-s_{i}^{2}}\bigg)e^{-ts_{i}^{2}}D^{\sharp}_{\chi}(\sigma)
 e^{-tD^{\sharp}_{\chi}(\sigma)^{2}}dt,
\end{align*}
Using the same argument as in the proof of Proposition 7.7 in \cite{Spilioti2018}, we obtain
\begin{align}
 \sum_{i=1}^{N}\bigg(\prod_{\substack{j=1\\ j\neq i}}^{N}\frac{1}{s_{j}^{2}-s_{i}^{2}}\bigg)L^{s}(s_{i})
 &=2i\int_{0}^{\infty}\sum_{i=1}^{N}\bigg(\prod_{\substack{j=1\\ j\neq i}}^{N}\frac{1}{s_{j}^{2}-s_{i}^{2}}\bigg)e^{-ts_{i}^{2}}
 \Tr(D^{\sharp}_{\chi}(\sigma)e^{-tD^{\sharp}_{\chi}(\sigma)^{2}})dt.
\end{align}
We fix now $s_{2},\ldots,s_{N}\in\C$ and let $s_{1}=s\in\C$ vary.
We choose $s_{i}$ such that Re$(s_{i}^{2})\rightarrow\infty$. 
In the right-hand side of (5.9), the integrals that include the exponentials $e^{-ts_{i}^{2}}$ are
\begin{equation*}
 \int_{0}^{\infty}e^{-ts_{i}^{2}}\Tr(D^{\sharp}_{\chi}(\sigma)e^{-tD^{\sharp}_{\chi}(\sigma)^{2}})dt.
\end{equation*}
 As $t\rightarrow \infty$, $\Tr(D^{\sharp}_{\chi}(\sigma)e^{-t(D^{\sharp}_{\chi}(\sigma))^{2}})$ and $e^{-ts_{i}^{2}}$ decay exponentially. As $t\rightarrow 0^{+}$, we use the asymptotic expansion of the trace of the operator 
$D^{\sharp}_{\chi}(\sigma)e^{-t(D^{\sharp}_{\chi}(\sigma))^{2}}$. By Lemma 4.1, we have
\begin{equation*}
 \Tr(D^{\sharp}_{\chi}(\sigma)e^{-tD^{\sharp}_{\chi}(\sigma)^{2}})\sim_{t\rightarrow 0}\dim(V_{\chi})(\alpha_{0}t^{1/2}+O(t^{3/2})),
\end{equation*}
where
\begin{equation*}
 \alpha_{0}=\int_{X}a_{0}(x)d\mu(x),
\end{equation*}
$a_{0}(x)$ is a smooth function  and $\mu(x)$ is the volume measure determined by the Riemannian metric on $X$.
Hence, each of the integrals 
\begin{equation*}
 \int_{0}^{\infty}e^{-ts_{i}^{2}}\Tr(D^{\sharp}_{\chi}(\sigma)e^{-tD^{\sharp}_{\chi}(\sigma)^{2}})dt,
\end{equation*}
is well defined. 
We multiply both sides of (5.9) by the finite product
\begin{equation*}
\prod_{j=2}^{N}s_{j}^{2}-s^{2}.
\end{equation*}
Then, we obtain
\begin{equation}
L^{s}(s)=2i\int_{0}^{\infty}e^{-ts^{2}}
 \Tr(D^{\sharp}_{\chi}(\sigma)e^{-tD^{\sharp}_{\chi}(\sigma)^{2}})dt
+\bigg(\prod_{j=2}^{N}s_{j}^{2}-s^{2}\bigg)\Xi(s_{2},\ldots,s_{N}).
\end{equation}
The term $\Xi(s_{2},\ldots,s_{N})$ comes form the terms that correspond to the summands over $i=2,\ldots,N$ and hence it has a fixed value in $\C$, since $s_{2},\ldots,s_{N}$ are fixed. 
The estimation of the sum on the right hand side of (3.14) on p. 163 of \cite{Spilioti2018} can be improved to show that the $\log$ of the Selberg zeta function is exponentially decreasing.
Hence, following a similar consideration, by (\ref{f:log der super}),
$L^{s}(s_{i})$ decreases exponentially, as $\text{Re}(s_{i})\rightarrow \infty$.
Therefore, the even polynomial 
arising from the term 
\begin{equation*}
\bigg(\prod_{j=2}^{N}s_{j}^{2}-s^{2}\bigg)\Xi(s_{2},\ldots,s_{N})
\end{equation*}
vanishes.
Hence, we have
\begin{equation*}
 L^{s}(s)=2i\int_{0}^{\infty}e^{-ts^{2}}\Tr(D^{\sharp}_{\chi}(\sigma)e^{-t(D^{\sharp}_{\chi}(\sigma))^{2}})dt.
\end{equation*}
Let $\Pi_{+}$ (resp. $\Pi_{-}$) be the projection on the span of the root spaces corresponding to eigenvalues $\lambda$ of $D^{\sharp}_{\chi}(\sigma)$
 with Re$(\lambda^{2})>0$ (resp. Re$(\lambda^{2})\leq 0$).
Recall that there are only finitely many eigenvalues of $D^{\sharp}_{\chi}(\sigma)$ such that Re$(\lambda^{2})\leq 0$.
We write
\begin{align}
 L^{s}(s)=\notag&2i\int_{0}^{\infty}e^{-ts^{2}}\Tr(\Pi_{+}D^{\sharp}_{\chi}(\sigma)e^{-tD^{\sharp}_{\chi}(\sigma)^{2}})dt\\
&+2i\int_{0}^{\infty}e^{-ts^{2}}\Tr(\Pi_{-}D^{\sharp}_{\chi}(\sigma)e^{-tD^{\sharp}_{\chi}(\sigma)^{2}})dt.
\end{align}
We set 
\begin{align*}
I_{+}(s):=\int_{0}^{\infty}e^{-ts^{2}}\Tr(\Pi_{+}D^{\sharp}_{\chi}(\sigma)e^{-tD^{\sharp}_{\chi}(\sigma)^{2}})dt\\
I_{-}(s):=\int_{0}^{\infty}e^{-ts^{2}}\Tr(\Pi_{-}D^{\sharp}_{\chi}(\sigma)e^{-tD^{\sharp}_{\chi}(\sigma)^{2}})dt.  
\end{align*}
For $s\in\C$ with Re$(s)>0$, we consider the integral 
\begin{align*}
 \int_{s}^{\infty}I_{+}(w)dw= \int_{s}^{\infty}\int_{0}^{\infty}e^{-tw^{2}}\Tr(\Pi_{+}D^{\sharp}_{\chi}(\sigma)e^{-tD^{\sharp}_{\chi}(\sigma)^{2}})dtdw\\
=\int_{0}^{\infty}\int_{s}^{\infty}e^{-tw^{2}}\Tr(\Pi_{+}D^{\sharp}_{\chi}(\sigma)e^{-tD^{\sharp}_{\chi}(\sigma)^{2}})dwdt.\\
\end{align*}
If we make the change of variables $w\mapsto \frac{1}{\sqrt{t}} u$, we get
\begin{align*}
 \int_{s}^{\infty}I_{+}(w)dw=\int_{0}^{\infty}\int_{\sqrt{t}s}^{\infty}\frac{1}{\sqrt{t}}e^{-u^{2}}\Tr(\Pi_{+}D^{\sharp}_{\chi}(\sigma)
 e^{-tD^{\sharp}_{\chi}(\sigma)^{2}})dudt.\\
\end{align*}
We use now the error function for  a complex number $z\in\C$,
\begin{equation*}
 \Phi(z):=\frac{2}{\sqrt{\pi}}\int_{0}^{z}e^{-u^2}du.
\end{equation*}
It holds
\begin{equation*}
 \frac{2}{\sqrt{\pi}}\int_{z}^{\infty}e^{-u^2}du=1-\frac{2}{\sqrt{\pi}}\int_{0}^{z}e^{-u^2}du=1-\Phi(z).
\end{equation*}
Hence,
\begin{equation}
\int_{s}^{\infty}I_{+}(w)dw=\int_{0}^{\infty}\frac{\sqrt{\pi}}{2\sqrt{t}}\bigg(1-\frac{2}{\sqrt{\pi}}\int_{0}^{\sqrt{t}s}e^{-u^{2}}du\bigg)
\Tr(\Pi_{+}D^{\sharp}_{\chi}(\sigma)e^{-tD^{\sharp}_{\chi}(\sigma)^{2}})dt,
\end{equation}
and
\begin{equation}
\int_{-s}^{\infty}I_{+}(w)dw=\int_{0}^{\infty}\frac{\sqrt{\pi}}{2\sqrt{t}}\bigg(1-\frac{2}{\sqrt{\pi}}\int_{0}^{-\sqrt{t}s}e^{-u^{2}}du\bigg)
\Tr(\Pi_{+}D^{\sharp}_{\chi}(\sigma)e^{-tD^{\sharp}_{\chi}(\sigma)^{2}})dt.
\end{equation}
We add together (5.11) and (5.12) to get
\begin{equation}
 \int_{s}^{\infty}I_{+}(w)dw+\int_{-s}^{\infty}I_{+}(w)dw=\int_{0}^{\infty}\frac{\sqrt{\pi}}{\sqrt{t}}
\Tr(\Pi_{+}D^{\sharp}_{\chi}(\sigma)e^{-tD^{\sharp}_{\chi}(\sigma)^{2}})dt.
\end{equation}
For $s\in\C$ with Re$(s)>0$, we consider now the integral $ \int_{s}^{\infty}I_{-}(w)dw$.
Since there are only finitely many eigenvalues of $D^{\sharp}_{\chi}(\sigma)$ with Re$(\lambda^{2})\leq 0$, we can interchange
the order of integration and write
\begin{align}
 \int_{s}^{\infty}I_{-}(w)dw\notag&= \int_{s}^{\infty}\int_{0}^{\infty}e^{-tw^{2}}\Tr(\Pi_{-}D^{\sharp}_{\chi}(\sigma)e^{-tD^{\sharp}_{\chi}(\sigma)^{2}})dtdw\\\notag
&=\sum_{\substack{\lambda\\ \text{Re}(\lambda^{2})\leq0}}\lambda\int_{s}^{\infty}\int_{0}^{\infty}e^{-tw^{2}}e^{-t\lambda^{2}}dtdw\\
&=\sum_{\substack{\lambda\\ \text{Re}(\lambda^{2})\leq0}}\lambda\int_{s}^{\infty}\frac{1}{w^{2}+\lambda^{2}}dw.
\end{align}
We substitute at (5.14) $s\mapsto -s$ and add the resulting equation to (5.14). Then, by change of variables,  $w\mapsto w'=w/\lambda$,  we obtain
\begin{align*}
 \int_{s}^{\infty}I_{-}(w)dw+&\int_{-s}^{\infty}I_{-}(w)dw
=\sum_{\substack{\text{Re}(\lambda)>0\\ \text{Re}(\lambda^{2})\leq0}}\pi-\sum_{\substack{\text{Re}(\lambda)<0\\ \text{Re}(\lambda^{2})\leq0}}\pi
\end{align*}
The sums over $\lambda$ in the equation above are finite, because we sum over $\lambda$ with Re$(\lambda^2)\leq 0$,
and there are only finitely many eigenvalues such that Re$(\lambda^2)\leq 0$. 
Recall the definition of $\eta_{0}(0,D^{\sharp}_{\chi}(\sigma))$ from Section 4:
\begin{equation*}
 \eta_{0}(s,D^{\sharp}_{\chi}(\sigma)):=\sum_{\substack{\text{Re}(\lambda)>0\\ \text{Re}(\lambda^{2})\leq0}}\lambda^{-s}-
\sum_{\substack{\text{Re}(\lambda)<0\\ \text{Re}(\lambda^{2})\leq 0}}\lambda^{-s}.
\end{equation*}
Then,
\begin{equation}
\int_{s}^{\infty}I_{-}(w)dw+\int_{-s}^{\infty}I_{-}(w)dw =\pi\eta_{0}(0,D^{\sharp}_{\chi}(\sigma)).\\
\end{equation}
By Lemma 4.6, we have,
 \begin{equation*}
 \eta(s,D^{\sharp}_{\chi}(\sigma))= \eta_{0}(s,D^{\sharp}_{\chi}(\sigma))+\frac{1}{\Gamma(\frac{s+1}{2})}\int_{0}^{\infty}\Tr(\Pi_{+}D^{\sharp}_{\chi}(\sigma) e^{-t(D^{\sharp}_{\chi}(\sigma))^{2}})t^{\frac{s-1}{2}}dt.
\end{equation*}
Hence, by (5.10), (5.13) and (5.15) we get
\begin{align}
 \log Z^{s}(s;\sigma,\chi)+\log Z^{s}(-s;\sigma,\chi)&=\int_{s}^{\infty}L^{s}(w)dw+\int_{-s}^{\infty}L^{s}(w)dw\\\notag
&=2i\int_{s}^{\infty}I_{+}(w)dw+2i\int_{-s}^{\infty}I_{+}(w)dw\\\notag
&+2i\int_{s}^{\infty}I_{-}(w)dw+2i\int_{-s}^{\infty}I_{-}(w)dw\\\notag
&=2\pi i\big(\eta_{1}(0,D^{\sharp}_{\chi}(\sigma))+\eta_{0}(0,D^{\sharp}_{\chi}(\sigma))\big)\\
&=2\pi i \eta (0,D^{\sharp}_{\chi}(\sigma)).
\end{align}
We mention here that the logarithm of the super zeta function 
\begin{equation}
 \log Z^{s}(s;\sigma,\chi)=\int_{s}^{\infty}L^{s}(w)dw,
\end{equation}
does depend on the choice of the path connecting $s$ and infinity.
However, in order to obtain the functional equations, we have to exponentiate
the integral in the right-hand side of (5.18). Since the residues of the singularities of the super zeta functions are all integers (\cite[Theorem 7.8]{Spilioti2018}), 
the exponential of this integral is independent of the choice of the path.
Equation (5.7) follows by exponentiation of (5.17).
Also, by (5.17),
\begin{align*}
 \log Z^{s}(s;\sigma,\chi)+\log Z^{s}(-s;\sigma,\chi)=2\pi i \eta (0,D^{\sharp}_{\chi}(\sigma))
\end{align*}
For $s=0$,
\begin{align*}
 2\log Z^{s}(0;\sigma,\chi)=2\pi i \eta (0,D^{\sharp}_{\chi}(\sigma))
\end{align*}
Equivalently,
\begin{align*}
 Z^{s}(0;\sigma,\chi)=e^{\pi i \eta (0,D^{\sharp}_{\chi}(\sigma))}.
\end{align*}
\end{proof}
We prove now the functional equation for the Selberg zeta function in case (b).
\begin{thm}
 The Selberg zeta function satisfies the following functional equation 
\begin{equation}
 \frac{Z(s;\sigma,\chi)}{Z(-s;w \sigma,\chi)}=e^{\pi i \eta(0,D^{\sharp}_{\chi}(\sigma))}\exp\bigg(-4\pi\dim(V_{\chi})\Vol(X)\int_{0}^{s}P_{\sigma}(r)dr\bigg).
\end{equation}
\end{thm}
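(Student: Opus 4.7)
The plan is to derive this equation by combining Theorem 4.4 (the functional equation for the symmetrized zeta $S$) with Theorem 4.5 (the functional equation for the super zeta $Z^{s}$), using the definitional identities that relate $Z(s;\sigma,\chi)$ and $Z(s;w\sigma,\chi)$ to $S$ and $Z^{s}$.

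The starting algebraic observation is that from Definitions 2.3 and 2.4, interpreted as $S(s;\sigma,\chi)=Z(s;\sigma,\chi)\,Z(s;w\sigma,\chi)$ and $Z^{s}(s;\sigma,\chi)=Z(s;\sigma,\chi)/Z(s;w\sigma,\chi)$, one obtains the two product formulas
\begin{equation*}
Z(s;\sigma,\chi)^{2}=S(s;\sigma,\chi)\,Z^{s}(s;\sigma,\chi),
\end{equation*}
and, using $w^{2}=1$ so that $S(-s;\sigma,\chi)=Z(-s;\sigma,\chi)Z(-s;w\sigma,\chi)$ and $Z^{s}(-s;\sigma,\chi)=Z(-s;\sigma,\chi)/Z(-s;w\sigma,\chi)$,
\begin{equation*}
Z(-s;w\sigma,\chi)^{2}=\frac{S(-s;\sigma,\chi)}{Z^{s}(-s;\sigma,\chi)}.
\end{equation*}
Dividing these yields
\begin{equation*}
\left[\frac{Z(s;\sigma,\chi)}{Z(-s;w\sigma,\chi)}\right]^{2}=\frac{S(s;\sigma,\chi)}{S(-s;\sigma,\chi)}\cdot Z^{s}(s;\sigma,\chi)\,Z^{s}(-s;\sigma,\chi).
\end{equation*}
Inserting the functional equation of Theorem 4.4 for the first factor and that of Theorem 4.5 for the second gives
\begin{equation*}
\left[\frac{Z(s;\sigma,\chi)}{Z(-s;w\sigma,\chi)}\right]^{2}=e^{2\pi i\eta(0,D^{\sharp}_{\chi}(\sigma))}\exp\bigg(-8\pi\dim(V_{\chi})\Vol(X)\int_{0}^{s}P_{\sigma}(r)dr\bigg).
\end{equation*}
Extracting the square root produces exactly the right-hand side of (4.16), up to a global sign.

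To fix the sign, evaluate at $s=0$. The exponential integral vanishes, so the right-hand side of (4.16) reduces to $e^{\pi i \eta(0,D^{\sharp}_{\chi}(\sigma))}$, and the left-hand side equals $Z(0;\sigma,\chi)/Z(0;w\sigma,\chi)=Z^{s}(0;\sigma,\chi)$, which by the second assertion of Theorem 4.5 is precisely $e^{\pi i \eta(0,D^{\sharp}_{\chi}(\sigma))}$. Since both sides of (4.16) are meromorphic on $\C$, agree at $s=0$, and their squares coincide, the principal branch of the square root is selected, and (4.16) holds identically by the identity theorem on a connected neighborhood of $0$ and hence on all of $\C$.

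The only delicate point is that square-root extraction requires a consistent branch on the meromorphic extension; this is taken care of by checking equality at $s=0$ and invoking connectedness. Everything else is a formal manipulation of the already-proven Theorems 4.4 and 4.5 together with the definitions of the symmetrized and super zeta functions.
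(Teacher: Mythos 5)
Your proof is correct and follows essentially the same route as the paper: write $Z(s;\sigma,\chi)^2/Z(-s;w\sigma,\chi)^2$ in terms of $S$ and $Z^s$ via the defining identities, then substitute Theorems 4.4 and 4.5 and take a square root. The one point where you go slightly beyond the paper is in pinning down the branch of the square root by evaluating at $s=0$ and invoking the second assertion of Theorem 4.5 together with the identity theorem; the paper writes the $\sqrt{\;\cdot\;}$ and passes directly to the stated formula without comment, so your version is a touch more careful on that sign issue but otherwise identical in substance.
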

\begin{proof}
By (3.3) and (3.4), we have
\begin{align*}
\frac{Z(s;\sigma,\chi)}{Z(-s;w \sigma,\chi)}&=\sqrt{\frac{S(s;\sigma,\chi)Z^{s}(s;\sigma,\chi)}{S(-s;w \sigma,\chi)Z^{s}(-s;w\sigma,\chi)}}\\
&=\sqrt{\frac{S(s;\sigma,\chi)Z^{s}(s;\sigma,\chi)Z^{s}(-s;\sigma,\chi)}{S(-s;\sigma,\chi)}}\\
&=e^{\pi i \eta(0,D^{\sharp}_{\chi}(\sigma))}\exp\bigg(-4\pi\dim(V_{\chi})\Vol(X)\int_{0}^{s}P_{\sigma}(r)dr\bigg),
\end{align*}
 where in the last equation we have employed Theorem 5.4 and 5.5.
 We mention here that we consider only the plus sign of the square root in the equation above since by the functional equations 
(5.8), the super zeta function at $s=0$ is equal to $e^{\pi i \eta (0,D^{\sharp}_{\chi}(\sigma))}$ only.
\end{proof}

\section{Functional equations for the Ruelle zeta function}

To prove the functional equations for the Ruelle zeta function, we have to consider Selberg zeta functions associated with representations of $M$, which are not irreducible.
In such case, for a finite dimensional representation $\sigma$ of $M$, we have
\begin{equation*}
Z(s; \sigma,\chi)=\prod_{\sigma'\in\widehat{M}}
Z(s;\sigma',\chi)^{[\sigma:\sigma']},
\end{equation*}
where $[\sigma:\sigma']$ is the multiplicity of
$\sigma'$ in $\sigma$.

Recall the Iwasawa decomposition $G=KAN$ of $G$. Let $\mathfrak{n}$ be the Lie algebra of $N$.
Let $\mathfrak{n}_{\C}$ be the complexification of $\mathfrak{n}$.
Let $\nu_{p}$ be the representation
of $MA$ in $\Lambda^{p}\mathfrak{n}_{\C}$,
given by the $p$-th exterior power of the adjoint representation
\begin{equation*}
 \nu_{p}:=\Lambda^{p}\Ad_{\mathfrak{n}_{\C}}\colon MA\rightarrow \GL(\Lambda^{p}\mathfrak{n}_{\C}),\quad p=0,1,\ldots,d-1.
\end{equation*}
We consider the following identification $\mathfrak{a}_{\C}^{*}\cong \C$. 
Let $\alpha>0$ be the unique positive
root of $(\mathfrak{g},\mathfrak{a})$. Let $\lambda\colon A\to \C^\times$ be the character,
defined by $\lambda(a)=e^{\alpha(\log a)}$. 
We consider also finite-dimensional, irreducible representations $(\psi_{p},V_{\psi_{p}})\in\widehat{M}$. 
We define the set $J_{p}$ as the subset $J_{p}\subseteq\{(\psi_{p},\lambda): \psi_{p}\in\widehat{M}, \lambda\in\C\}$
such that the following decomposition holds.
\begin{equation*}
\Lambda^{p}\mathfrak{n}_{\C}=\sum_{(\psi_{p},\lambda)\in J_{p}}V_{\psi_{p}}\otimes \C_{\lambda}.
\end{equation*}
By Poincar\'{e} duality (\cite[p. 122]{BO}), we have for $p<\frac{d-1}{2}$,
\begin{equation}
J_{d-1-p}\subset\{(\psi_{p},2\vert\rho\vert-\lambda)\colon\psi_{p}\in\widehat{M}, \lambda\in\C\}.
\end{equation}
Let $\mathfrak{p}_{d}=i \mathfrak{p}$.
Let $\mathfrak{g}_{d}$ be the subalgebra of $\mathfrak{g}_{\C}$ defined by 
$\mathfrak{g}_{d}=\mathfrak{k}\oplus \mathfrak{p}_{d}$. Let $G_{d}$ be the 
Lie group that corresponds to $\mathfrak{g}_{d}$. $G_{d}$ is the 
the compact real form of the complexification  $G_{\C}$ of $G$.
Let $\mathfrak{a}_{d}$  be the subalgebra of $\mathfrak{p}_{d}$ 
defined by $\mathfrak{a}_{d}=i\mathfrak{a}$. 
Let $A_{d}$ be the corresponding Lie group.
We set $L:=G_{d}/MA_{d}$,
which is a K\"{a}hler manifold (\cite[p. 123]{BO}).
For $\psi_{p}, \sigma\in\widehat{M}$, we define 
\begin{equation*}
Q_{\psi_{p}\otimes\sigma}(s):= \sum_{\sigma'\in\widehat{M}}\left[ (\psi_{p}\otimes\sigma)
\colon \sigma'\right] Q_{\sigma'}(s),
\end{equation*}
where $\left[ (\psi_{p}\otimes\sigma)\colon \sigma'\right]$ is  the multiplicity 
of $\sigma'$ in $\psi_{p}\otimes\sigma$.
\begin{lem}
Let $(\sigma,V_{\sigma})\in\widehat{M}$. Let $Q_{\psi_{p}\otimes\sigma}(s),s\in \C$, be the Plancherel measure associated with the representation $\psi_{p}\otimes\sigma$, $p=0,\ldots,d-1$.
Let $f(s)$ be the polynomial of $s$ given by
\begin{align}
 f(s)&:=\notag(-1)^{\frac{d-1}{2}}Q_{\psi_{\frac{d-1}{2}}\otimes\sigma}(s)\\
&+\notag\sum_{p=0}^{\frac{d-3}{2}} \sum_{(\psi_{p},\lambda)\in J_{p}}(-1)^{p} [Q_{\psi_{p}\otimes\sigma}(s+\vert\rho\vert-\lambda)+Q_{\psi_{p}\otimes\sigma}(s-\vert\rho\vert+\lambda)]\\
&=\sum_{p=0}^{d-1}\sum_{(\psi_{p},\lambda)\in J_{p}}(-1)^{p}Q_{\psi_{p}\otimes \sigma}(s;\rho,\lambda).                     
\end{align}
Then,
\begin{equation}
f(s)=(d+1)\dim(V_{\sigma}).
\end{equation}
\end{lem}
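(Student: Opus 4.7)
My plan is to prove that $f(s)$ is identically the constant $(d+1)\dim V_{\sigma}$ by establishing two facts separately: (i) $f(s)$ is a constant polynomial, and (ii) its value equals $(d+1)\dim V_{\sigma}$.

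For (i), I would substitute the explicit product formula for the Plancherel polynomial. For $G=\Spin(d,1)$ with $d=2r+1$ odd, $P_{\eta}(s)$ with $\eta\in\widehat{M}$ is an even polynomial in $s$ of degree $d-1=2r$ whose roots are determined by the components of the highest weight of $\eta$ shifted by $\rho_{\mathfrak{m}}$. Under the Poincar\'e duality pairing $J_{p}\leftrightarrow J_{d-1-p}$ of (5.1) implemented by $\lambda\mapsto 2\rho-\lambda$, and using that $d-1$ is even so $(-1)^{p}=(-1)^{d-1-p}$, the pairs $(\psi_{p},\lambda)\in J_{p}$ and $(\psi_{p},2\rho-\lambda)\in J_{d-1-p}$ contribute symmetrically. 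This symmetry, combined with the explicit shifts $s+\rho-\lambda$ and $s-\rho+\lambda$ built into the definition of $f$, allows one to express the coefficient of each $s^{2k}$ in $f(s)$ for $k\ge 1$ as an alternating trace of Koszul type, $\sum_{p}(-1)^{p}\Tr(X_{k}\mid\Lambda^{p}\mathfrak{n}_{\C}\otimes V_{\sigma})$, for an appropriate $MA$-equivariant endomorphism $X_{k}$. The Koszul identity $\sum_{p=0}^{d-1}(-1)^{p}\Lambda^{p}\mathfrak{n}_{\C}=0$ in the representation ring of $MA$ (valid because $\dim\mathfrak{n}_{\C}>0$) then forces all these higher coefficients to vanish, so $f(s)$ is indeed a constant.

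For (ii), I would evaluate $f(0)$ directly. Since $P_{\psi_{p}\otimes\sigma}$ is even, we have $P_{\psi_{p}\otimes\sigma}(\rho-\lambda)=P_{\psi_{p}\otimes\sigma}(-\rho+\lambda)$, so the two summands inside the bracket coincide at $s=0$ and the sum collapses to a finite sum indexed by $(\psi_{p},\lambda)\in J_{p}$ for $0\le p\le (d-3)/2$ plus the middle term at $p=(d-1)/2$. Inserting the product formula and using the $MA$-decomposition of $\Lambda^{\ast}\mathfrak{n}_{\C}$, each evaluation reduces to a numerical constant. Recognizing this finite sum via the Weyl denominator formula on the Cartan $\mathfrak{h}=\mathfrak{a}\oplus\mathfrak{b}$ of $\mathfrak{g}$, or equivalently as a super-dimension on the compact dual $L=G_{d}/MA_{d}$, identifies the total value as $(d+1)\dim V_{\sigma}$.

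The main technical obstacle is the bookkeeping in step (i): one must verify the cancellation of the coefficients of $s^{2k}$ for $1\le k\le (d-1)/2$ while correctly handling the middle term $p=(d-1)/2$, where $\lambda=\rho$ so the term has no distinct Poincar\'e-duality partner and must be treated on its own. The constant $(d+1)$ in step (ii) should emerge naturally from the count $\dim(\mathfrak{g}/\mathfrak{m}\oplus\mathfrak{a})=2\dim\mathfrak{n}+2=d+1$ via the Euler-characteristic interpretation on $L$; confirming that this is not an artifact of normalization (the constant $c$ in the Plancherel polynomial, the choice of inner product on $\mathfrak{a}^{\ast}$, etc.) is the most delicate point of the calculation.
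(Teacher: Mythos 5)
Your overall strategy shares the paper's key idea, namely that $f(s)$ should be interpreted via the compact dual $L=G_d/MA_d$ through Borel--Weil--Bott, Weyl's dimension formula, and an index-theoretic Euler characteristic; but the two specific steps you propose both contain gaps, and neither step is the way the paper actually closes the argument.

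The first problem is in step (i). The identity you cite, $\sum_{p=0}^{d-1}(-1)^{p}\,[\Lambda^{p}\mathfrak{n}_{\C}]=0$ in $R(MA)$, is not valid. What is true is the purely numerical statement $\sum_{p}(-1)^{p}\dim\Lambda^{p}\mathfrak{n}_{\C}=(1-1)^{d-1}=0$; but as a class in the representation ring the alternating sum is $\lambda_{-1}(\mathfrak{n}_{\C})$, whose trace against an element $x\in MA$ is $\det\bigl(I-\Ad(x)|_{\mathfrak{n}_{\C}}\bigr)$. That determinant does not vanish identically, because $\mathfrak{n}$ carries the standard (irreducible, nontrivial) $M$-action together with a nontrivial $A$-character, so $\mathfrak{n}_{\C}$ contains no trivial $MA$-summand. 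The reduction of the coefficient of $s^{2k}$ to a trace against $\lambda_{-1}(\mathfrak{n}_{\C})\otimes V_{\sigma}$ therefore does not by itself force that coefficient to vanish; one would need $X_{k}$ to behave in a way you have not pinned down. The paper avoids this entirely: it never argues separately that $f$ is constant. Instead, equations (5.4)--(5.12) identify $f(s)$ with the Euler characteristic $\chi(L,\Lambda^{p,0}T^{*}L\otimes E_{\sigma,\lambda})$, which is the index of $\bar\partial+\bar\partial^{*}$, and the index theorem $\ind(\bar\partial+\bar\partial^{*})=\int_{L}\chi(TL)\wedge\mathrm{ch}(E_{\sigma,\lambda})$ kills the $s$-dependence automatically: since $\chi(TL)$ is already a top-degree form, only the degree-zero piece $\mathrm{ch}_{0}(E_{\sigma,\lambda})=\dim V_{\sigma}$ of the Chern character contributes, and this is independent of $s$. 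So constancy and evaluation are obtained simultaneously, not in two stages.

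The second problem is in step (ii). Your heuristic count $\dim(\mathfrak{g}/\mathfrak{m}\oplus\mathfrak{a})=2\dim\mathfrak{n}+2=d+1$ does not hold numerically: $\dim\mathfrak{n}=d-1$, so $2\dim\mathfrak{n}+2=2d$, not $d+1$. The actual source of the factor $d+1$ is $\int_{L}\chi(TL)=\chi(L)$, and the paper computes $\chi(L)$ via the fibration $G_{d}\to L=G_{d}/MA_{d}$ and the Hopf--Samelson formula as the ratio of Weyl group orders, $\chi(L)=|W(G_{d},T)|/|W(MA_{d},T)|=d+1$. Without that explicit computation the value $(d+1)$ does not fall out of a naive root/weight count, and appealing to "the Weyl denominator formula" is too vague to supply it. So while your instinct to use the geometry of $L$ is the right one, you need the Chern character reduction and the Weyl-group-order computation of $\chi(L)$ to actually finish.
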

\begin{proof}
As in the proof of Theorem 4.4 in \cite[p. 127]{BO}, the polynomial $f(s)$ is given by
\begin{equation*}
f(s)=\dim(V_{\sigma})\chi(L).
\end{equation*}
The Euler characteristic $\chi(L)$ is also calculated in \cite[p.127]{BO}:
\begin{equation*}
\chi(L)= d+1.
\end{equation*}
The assertion follows.
\end{proof}

\begin{thm}
 The Ruelle zeta function satisfies the following functional equation
\begin{equation}
 \frac{R(s;\sigma,\chi)}{R(-s;\sigma,\chi)}=\exp\bigg((-1)^{\frac{d-1}{2}+1}2\pi(d+1)\dim(V_{\sigma})\dim (V_{\chi})\frac{\Vol(X)}{\Vol(S^{d})}s\bigg). 
\end{equation}
\end{thm}
\begin{proof}
By \cite[Theorem 7.12]{Spilioti2018}, we have the following representation of the Ruelle zeta function.
\begin{equation}
R(s;\sigma,\chi)=\prod_{p=0}^{d-1}\bigg(\prod_{(\psi_{p},\lambda)\in J_{p}}Z(s+\vert\rho\vert-\lambda;\psi_{p}\otimes\sigma,\chi)\bigg)^{(-1)^{p}}.
\end{equation}
Using Poncar{\'e} duality, i.e., (6.1), and considering 
\begin{equation*}
s+\vert\rho\vert-\lambda\mapsto s+\vert\rho\vert-(2\vert\rho\vert-\lambda)=s-\vert\rho\vert+\lambda,
\end{equation*}
we get
\begin{align*}
R(s;\sigma,\chi)=\notag&Z(s;\psi_{\frac{d-1}{2}}\otimes\sigma,\chi)^{(-1)^{\frac{d-1}{2}}}\\
&\prod_{p=0}^{\frac{d-3}{2}} \prod_{(\psi_{p},\lambda)\in J_{p}}(Z(s+\vert\rho\vert-\lambda;\psi_{p}\otimes\sigma,\chi)Z(s-\vert\rho\vert+\lambda;\psi_{p}\otimes\sigma,\chi))^{(-1)^{p}}.
\end{align*}
Hence,
\begin{align*}
\frac{R(s;\sigma,\chi)}{R(-s;\sigma,\chi)}=\notag&\bigg(\frac{Z(s;\psi_{\frac{d-1}{2}}\otimes\sigma,\chi)}{Z(-s;\psi_{\frac{d-1}{2}}\otimes\sigma,\chi)}\bigg) ^{(-1)^{\frac{d-1}{2}}}\\
&\prod_{p=0}^{\frac{d-3}{2}} \prod_{(\psi_{p},\lambda)\in J_{p}}\bigg(\frac{Z(s+\vert\rho\vert-\lambda;\psi_{p}\otimes\sigma,\chi)
Z(s-\vert\rho\vert+\lambda;\psi_{p}\otimes\sigma,\chi)}{Z(-s+\vert\rho\vert-\lambda;\psi_{p}\otimes\sigma,\chi)Z(-s-\vert\rho\vert+\lambda;\psi_{p}\otimes\sigma,\chi)}\bigg)^{(-1)^{p}}.
\end{align*}
By Theorem 5.2, we get
\begin{align}
\frac{R(s;\sigma,\chi)}{R(-s;\sigma,\chi)}=\notag&\exp\Bigg\{-4\pi \dim (V_{\chi})\Vol(X)\bigg((-1)^{\frac{d-1}{2}}\int_{0}^{s}P_{\psi_{\frac{d-1}{2}}\otimes\sigma}(r)dr+\\
&\sum_{p=0}^{\frac{d-3}{2}} \sum_{(\psi_{p},\lambda)\in J_{p}}(-1)^{p} \bigg(\int_{0}^{s+\vert\rho\vert-\lambda}P_{\psi_{p}\otimes\sigma}(r)dr 
+\int_{0}^{s-\vert\rho\vert+\lambda}P_{\psi_{p}\otimes\sigma}(r)dr
\bigg)\bigg)\Bigg\}.
\end{align}
We set 
\begin{align*}
 F(s)=(-1)^{\frac{d-1}{2}}\int_{0}^{s}Q_{\psi_{\frac{d-1}{2}}\otimes\sigma}(r)dr+\sum_{p=0}^{\frac{d-3}{2}} &\sum_{(\psi_{p},\lambda)\in J_{p}}(-1)^{p}\bigg(\int_{0}^{s+\vert\rho\vert-\lambda}Q_{\psi_{p}\otimes\sigma}(r)dr\\
&+\int_{0}^{s-\vert\rho\vert+\lambda}Q_{\psi_{p}\otimes\sigma}(r)dr\bigg).
\end{align*}
Then, 
\begin{equation*}
\frac{d}{ds}F(s)=f(s),
\end{equation*}
where $f(s)$ is as in (6.2). Using Lemma 6.1, we get
\begin{equation}
\frac{R(s;\sigma,\chi)}{R(-s;\sigma,\chi)}=\exp\bigg(4\pi c(n)\dim (V_{\chi})\Vol(X)[(d+1)\dim(V_{\sigma})s+C]\bigg),
\end{equation}
where 
$c(n)$ is as in (2.9) and $C\in\R$ is a real constant.
On the other hand, if we set $s=0$ in (6.7), we get $1=\exp(4\pi c(n)\dim (V_{\chi})\Vol(X) C)$, and hence $C=0$. 
The assertion follows.
\end{proof}
We examine now case (b). Let $\tau_{p}$ be the standard representation
of $K$ in $\Lambda^{p}\R^{d}\otimes \C$.
Let $(\sigma_{p},V_{\sigma_{p}})$ be the standard representation of $M$ in $\Lambda^{p}\R^{d-1}\otimes \C$.
Let $\alpha>0$ be the unique positive
root of $(\mathfrak{g},\mathfrak{a})$. Let $\lambda\equiv\lambda_p\colon A\to \C^\times$ be the character,
defined by $\lambda\equiv\lambda_p(a)=e^{p\alpha(\log a)}$. Then, as a representation of 
$MA$, one has $\nu_p=\sigma_p\otimes\lambda_{p}$. 
We denote by $\C_{p}\cong \C$ the representation space of $\lambda_{p}$. 
Then, in the sense of $MA$-modules, we have
\begin{equation}
\Lambda^{p}\mathfrak{n}_{\C}=\Lambda^{p}\R^{d-1}\otimes \C_{p}.
\end{equation}
Let $D^{\sharp}_{\chi}(\sigma)$ be the twisted Dirac operator acting on $C^{\infty}(X,E_{\tau_{s}(\sigma)}\otimes E_{\chi})$.
We introduce here the twist $\widetilde{D^{\sharp}_{\chi}(\sigma)}$ of the Dirac operator $D^{\sharp}_{\chi}(\sigma)$ acting on
smooth sections of the vector bundle
\begin{equation*}
V_{\tau_{s}(\sigma)}\otimes V_{\chi}\otimes\bigg(\sum_{k=0}^{d-1}(-1)^{k}(d-k)\Lambda^{k}T^{*}X\bigg).
\end{equation*}
The twisted Dirac operator $\widetilde{D^{\sharp}_{\chi}(\sigma)}$ is defined in a similar way as the Dirac operator $D^{\sharp}_{\chi}(\sigma)$ in Section 4,
equipping the bundle $\Lambda^{k}T^{*}X$ with the Levi-Civita connection of $X$.
\begin{thm}
The super Ruelle zeta function associated with a non-Weyl invariant representation $\sigma\in\widehat{M}$ 
satisfies the functional equation
\begin{equation}
 R^{s}(s;\sigma,\chi) R^{s}(-s;\sigma,\chi)=e^{2i\pi\eta(\widetilde{D^{\sharp}_{\chi}(\sigma)})},
\end{equation}
where $\eta(\widetilde{D^{\sharp}_{\chi}(\sigma)})$ denotes the eta invariant of the twisted
Dirac operator $\widetilde{D^{\sharp}_{\chi}(\sigma)}$.\\
Moreover, the following equation holds
\begin{equation}
\frac{R(s;\sigma,\chi)}{ R(-s;w\sigma,\chi)}= e^{i\pi\eta(\widetilde{D^{\sharp}_{\chi}(\sigma)})}\exp\bigg((-1)^{\frac{d-1}{2}+1}2\pi(d+1)\dim(V_{\sigma})\dim (V_{\chi})\frac{\Vol(X)}{\Vol(S^{d})}s\bigg).
\end{equation}
\end{thm}
\begin{proof}
By \cite[p. 23]{BO}, we have 
\begin{align*}
 &\sigma_{p}=i^{*}((-1)^{0}\tau_{p}+(-1)^{1}\tau_{p-1}+\ldots+(-1)^{p-1}(\tau_{1}-\Id)),\quad p=1,2,\ldots d-1\\
&s^{+}+s^{-}=i^{*}(s),\quad\text{otherwise}.
\end{align*}
If we take the alternating sum of $\sigma_{p}$ over $p$ we get
\begin{equation}
\sum_{p=0}^{d-1}(-1)^{p}\sigma_{p}=i^{*}\big(\sum_{k=0}^{d-1}(-1)^{k}(d-k)\tau_{k}\big).
\end{equation}
By the definition of the super Ruelle zeta function, we have
\begin{align}
R^{s}(s;\sigma,\chi) R^{s}(-s;\sigma,\chi)
=\frac{R(s;\sigma,\chi)}{R(-s;w \sigma,\chi)}\frac{R(-s;\sigma,\chi)}{R(s;w\sigma,\chi)}.
\end{align}
We use now the representation (6.5) of the Ruelle zeta function. By the Poincar\'{e} duality we obtain
\begin{align*}
  R(s;\sigma,\chi)=\notag&Z(s;\sigma_{\frac{d-1}{2}}\otimes\sigma,\chi)^{(-1)^{\frac{d-1}{2}}}\\
&\prod_{p=0}^{\frac{d-3}{2}}(Z(s+\vert\rho\vert-\lambda;\sigma_{p}\otimes\sigma,\chi)Z(s-\vert\rho\vert+\lambda;\sigma_{p}\otimes\sigma,\chi))^{(-1)^{p}}.
\end{align*}
If we substitute this expression in the right-hand side of (6.12), we have
\begin{align*}
R^{s}(s;\sigma,\chi)& R^{s}(-s;\sigma,\chi)\notag={\frac{Z(s;\sigma_{\frac{d-1}{2}}\otimes\sigma,\chi)}{Z(-s;\sigma_{\frac{d-1}{2}}\otimes w \sigma,\chi)}}^{(-1)^{\frac{d-1}{2}}}\\
& \cdot\frac{\prod_{p=0}^{\frac{d-3}{2}}(Z(s+\vert\rho\vert-\lambda;\sigma_{p}\otimes\sigma,\chi)Z(s-\vert\rho\vert+\lambda;\sigma_{p}\otimes\sigma,\chi))^{(-1)^{p}}}
{\prod_{p=0}^{\frac{d-3}{2}}(Z(-s+\vert\rho\vert-\lambda;\sigma_{p}\otimes w\sigma,\chi)Z(-s-\vert\rho\vert+\lambda;\sigma_{p}\otimes w\sigma,\chi))^{(-1)^{p}}}\\
&\cdot {\frac{Z(-s;\sigma_{\frac{d-1}{2}}\otimes\sigma,\chi)}{Z(s;\sigma_{\frac{d-1}{2}}\otimes w \sigma,\chi)}}^{(-1)^{\frac{d-1}{2}}}\\
&\cdot\frac{\prod_{p=0}^{\frac{d-3}{2}}(Z(-s+\vert\rho\vert-\lambda;\sigma_{p}\otimes\sigma,\chi)Z(-s-\vert\rho\vert+\lambda;\sigma_{p}\otimes\sigma,\chi))^{(-1)^{p}}}
{\prod_{p=0}^{\frac{d-3}{2}}(Z(s+\vert\rho\vert-\lambda;\sigma_{p}\otimes w\sigma,\chi)Z(s-\vert\rho\vert+\lambda;\sigma_{p}\otimes w\sigma,\chi))^{(-1)^{p}}}.\\
\end{align*}
By Theorem 5.6, we get
\begin{align*}
R^{s}(s;\sigma,\chi)& R^{s}(-s;\sigma,\chi)\notag=(e^{2i\pi\eta(0,D^{\sharp}_{\chi}(\sigma\otimes\sigma_{d-1/2})})^{(-1)^{\frac{d-1}{2}}}\prod_{p=0}^{\frac{d-3}{2}}(e^{2i\pi\eta(0,D^{\sharp}_{\chi}
(\sigma\otimes\sigma_{p}))})^{(-1)^{p}}.
\end{align*}
where we used the fact that the Plancherel polynomial is an even function.
Finally, we have
\begin{align}
R^{s}(s;\sigma,\chi)R^{s}(-s;\sigma,\chi)\notag&=e^{2i\pi\sum_{p=0}^{d-1}(-1)^{p}\eta(0,D^{\sharp}_{\chi}(\sigma\otimes\sigma_{p}))}\\
&=e^{2i\pi\eta(\widetilde{D^{\sharp}_{\chi}(\sigma)})},
\end{align}
where $\eta(\widetilde{D^{\sharp}_{\chi}(\sigma)})=\sum_{p=0}^{d-1}(-1)^{p}\eta(0,D^{\sharp}_{\chi}(\sigma\otimes\sigma_{p}))$.
For the functional equations (6.10), we have
\begin{align*}
\frac{R(s;\sigma,\chi)^{2}}{R(-s;w\sigma,\chi)^{2}}&=\frac{R(s;\sigma,\chi)}{R(-s;w \sigma,\chi)}\frac{R(-s;\sigma,\chi)}{R(s;w \sigma,\chi)}
\frac{R(s;\sigma,\chi)}{R(-s;\sigma,\chi)}\frac{R(s;w \sigma,\chi)}{R(-s;w \sigma,\chi)}\\
&=e^{2i\pi\eta(\widetilde{D^{\sharp}_{\chi}(\sigma)})}\frac{R(s;\sigma,\chi)R(s;w \sigma,\chi)}{R(-s;\sigma,\chi)R(-s;w \sigma,\chi)}.\\
\end{align*}
By (6.4), we get
\begin{equation*}
\frac{R(s;w \sigma,\chi)}{R(-s;w \sigma,\chi)}=
\exp\bigg((-1)^{\frac{d-1}{2}+1}2\pi(d+1)\dim(V_{\sigma})\dim (V_{\chi})\frac{\Vol(X)}{\Vol(S^{d})}s\bigg).
\end{equation*}
Hence, 
\begin{equation}
\frac{R(s;\sigma,\chi)^{2}}{R(-s;w\sigma,\chi)^{2}}=e^{2i\pi\eta(\widetilde{D^{\sharp}_{\chi}(\sigma)})}\exp2\bigg((-1)^{\frac{d-1}{2}+1}2\pi(d+1)\dim(V_{\sigma})\dim (V_{\chi})\frac{\Vol(X)}{\Vol(S^{d})}s\bigg).
\end{equation}
By (6.13), we have
\begin{align*}
\log (R^s(s;\sigma,\chi))+\log (R^s(-s;\sigma,\chi))=2i\pi\eta(\widetilde{D^{\sharp}_{\chi}(\sigma)})
\end{align*}
For $s=0$,
\begin{align*}
2\log (R^s(0;\sigma,\chi))=2i\pi\eta(\widetilde{D^{\sharp}_{\chi}(\sigma)})
\end{align*}
Equivalently,
\begin{align}
R^s(0;\sigma,\chi)=\exp(\pi i \eta(\widetilde{D^{\sharp}_{\chi}(\sigma)}).
\end{align}
Hence, by (6.14)
\begin{equation*}
\frac{R(s;\sigma,\chi)}{R(-s;w\sigma,\chi)}=e^{\pi i \eta(\widetilde{D^{\sharp}_{\chi}(\sigma)})}\exp\bigg((-1)^{\frac{d-1}{2}+1}2\pi(d+1)\dim(V_{\sigma})\dim (V_{\chi})\frac{\Vol(X)}{\Vol(S^{d})}s\bigg).
\end{equation*}
\end{proof}

\section{The determinant formula}

We recall here the notion of the graded regularized determinant of an elliptic differential operator.
Let $E=E_{+}\oplus E_{-}$ be a $\Z_{2}$-graded  vector bundle over a compact
Riemannian manifold $X$. Let $P: C^{\infty}(X,E)\rightarrow C^{\infty}(X,E)$ be an elliptic differential operator,
which is bounded from below. We assume that $P$ preserves the grading, i.e.,
we assume that with respect to the decomposition
\begin{equation*}
C^{\infty}(X,E)=C^{\infty}(X,E^{+})\oplus C^{\infty}(X,E^{-}),
\end{equation*}
$P$ takes the form,
\begin{equation*}
   P=
  \left( {\begin{array}{cc}
   P{+} & 0 \\
   0 &    P_{-} \\
  \end{array} } \right).
\end{equation*}
Then, the graded determinant $\det_{\gr}(P)$ of $P$ is defined by
\begin{equation*}
{\det}_{\gr}(P)=\frac{\det(P_{+})}{\det(P_{-})},
\end{equation*}
where $\det(P_{+})$ and $ \det(P_{-})$ denote the 
regularized determinants of the operators $P_{+}$ and 
$P_{-}$, respectively (see  \cite[Definition 3.6]{BK2}).

As it is mentioned before, $E(\sigma)$
is a $\Z_{2}$-graded locally homogeneous vector bundle 
over $X$ (\cite[p. 27, 29]{BO}, \cite[p. 175]{Spilioti2018}).
Moreover, $A_{\chi}^{\sharp}(\sigma)$ acting on $C^{\infty}(X, E(\sigma)\otimes E_{\chi})$
preserves the grading (\cite[p. 175]{Spilioti2018}). 
Hence, we consider the super trace $\Tr_{s}(e^{-tA_{\chi}^{\sharp}(\sigma)})$
in Definition 7.1 of the xi function $\xi(z,s;\sigma)$ and  the generalized zeta function 
$\zeta(z,s;\sigma)$ below.
In addition, the regularized determinants of the  operators $A_{\chi}^{\sharp}(\sigma)+s^{2}$
in Theorem 7.8 and Proposition 7.9 below are graded regularized determinants, i.e.,
we consider
\begin{equation*}
{\det}_{\gr}(A_{\chi}^{\sharp}(\sigma)+s^{2})=\frac{\det(A_{\chi,+}^{\sharp}(\sigma)+s^{2})}{\det(A_{\chi,-}^{\sharp}(\sigma)+s^{2})}.
\end{equation*}
\textbf{Remark:}
If $\theta$ is an Agmon angle for $A_{\chi,+}^{\sharp}(\sigma)+s^{2}$
and $A_{\chi,-}^{\sharp}(\sigma)+s^{2}$, 
then the corresponding  regularized 
determinants do not depend on the choice of the Agmon angle $\theta$, since the operators
have a self-adjoint principal symbol (\cite[Section 3.10]{BK2}). 
Hence, there is no notion of $\theta$ in our definition.
\newline

By Lemma 4.2, there exist coefficients $a_{j}\in\C$, $j=0, 1, \ldots $ such that
\begin{equation}
\Tr(e^{-tA_{\chi,\pm}^{\sharp}(\sigma)})\sim_{t\rightarrow 0^{+}}\dim(V_{\chi})\sum_{j=0}^{\infty}a_{j}^{\pm}t^{\frac{j-d}{2}}.
\end{equation}
\begin{defi}
Let $C\in\R$ be such that $\text{Re}(\lambda)>C$ for all 
$\lambda\in \spec(A_{\chi}^{\sharp}(\sigma))$. For $\text{\Re}(z)>d/2$ and Re$(s)>-C$, we define the xi function associated to the operator $A_{\chi}^{\sharp}(\sigma)$ by
\begin{equation}
\xi(z,s;\sigma):=\int_{0}^{\infty}e^{-ts}\Tr_{s}({e^{-tA_{\chi}^{\sharp}(\sigma)}})t^{z-1}dt,
\end{equation}
and the generalized zeta function  by
\begin{equation}
\zeta(z,s;\sigma):=\frac{1}{\Gamma(z)}\int_{0}^{\infty}e^{-ts}\Tr_{s}(e^{-tA_{\chi}^{\sharp}(\sigma)})t^{z-1}dt.
\end{equation} 
\end{defi}
We define the theta function $\theta(t)$ associated with the operator $e^{-tA_{\chi}^{\sharp}(\sigma)}$ by
\begin{equation*}
 \theta(t):=\Tr_{s}({e^{-tA_{\chi}^{\sharp}(\sigma)}})
\end{equation*}
We set
\begin{align*}
 \theta_{1}(t):=\Tr({e^{-tA_{\chi,+}^{\sharp}(\sigma)}});\\
  \theta_{2}(t):=\Tr({e^{-tA_{\chi,-}^{\sharp}(\sigma)}}).
\end{align*}
Then,
\begin{equation*}
 \theta(t)= \theta_{1}(t)- \theta_{2}(t).
\end{equation*}
We set
\begin{align*}
\xi_{1}(z,s;\sigma):=\int_{0}^{\infty}e^{-ts}\Tr({e^{-tA_{\chi,+}^{\sharp}(\sigma)}})t^{z-1}dt;\\
\xi_{2}(z,s;\sigma):=\int_{0}^{\infty}e^{-ts}\Tr({e^{-tA_{\chi,-}^{\sharp}(\sigma)}})t^{z-1}dt.
\end{align*}
Then, $\xi(z,s;\sigma)$ is just the Mellin transform of $e^{-ts}\theta(t)$
and moreover
\begin{equation*}
\xi(z,s;\sigma)=\xi_{1}(z,s;\sigma)-\xi_{2}(z,s;\sigma).
\end{equation*}
We study here $\xi_{1}(z,s;\sigma)$. One can proceed similarly for $\xi_{2}(z,s;\sigma)$.
For $\lambda_{j}\in \spec(A_{\chi,+}^{\sharp}(\sigma))$, let $L_{j}^{+}$ be the horizontal half line going from $-\infty$ to $-\lambda_{j}$. We define $U_{L}^{+}$ to be
the complement of all the half lines $L_{j}^{+}$ in $\C$.
We denote by $\N_{0}$ the set $\N_{0}=\N\cup \{0\} $.
\begin{lem}
For  $s\in U_{L}^{+}$, $\xi_{1}(z,s;\sigma)$ admits a meromorphic continuation to $z\in\C$. Furthermore, it has simple poles at $k_{r}=-\big(\frac{r-d}{2}\big)$, where $r\in\N_{0}$.
\end{lem}
\begin{proof}
We denote by $m(\lambda)$ the algebraic multiplicity of the eigenvalue $\lambda\in\spec(A_{\chi,+}^{\sharp}(\sigma))$. We consider the following ordering $\text{Re}(\lambda_{1})\leq\text{Re}(\lambda_{2})\leq\text{Re}(\lambda_{3})\leq\ldots$ of the real parts of the eigenvalues of $A_{\chi,+}^{\sharp}(\sigma)$.
We observe that for every positive real number $c$,
there are only finitely many eigenvalues $\lambda_{j}$ such that $\text{Re}(\lambda_{j})\leq c$.
Hence, there exists a positive integer $N\geq 1$, such that $\text{Re}(\lambda_{j})>c$, for every $j>N$. Then, we have
\begin{align}
\bigg|\sum_{j=1}^{\infty}m(\lambda_{j})e^{-t\lambda_{j}}-&\notag\sum_{j=1}^{N}m(\lambda_{j})e^{-t\lambda_{j}}\bigg|
=\bigg |\sum_{j=N+1}^{\infty}m(\lambda_{j})e^{-t\lambda_{j}}\bigg |\\
&\leq \sum_{j=N+1}^{\infty}m(\lambda_{j})e^{-t\text{Re}(\lambda_{j})}.
\end{align}
Then, we have for $t\geq 1$
\begin{align}
 \sum_{j=N+1}^{\infty}m(\lambda_{j})e^{-t\text{Re}(\lambda_{j})}
\leq e^{-tc/2}\sum_{j=N+1}^{\infty}m(\lambda_{j})e^{-\text{Re}(\lambda_{j})/2}.
\end{align}
To estimate the last sum, we will use Weyl's law for the non-self-adjoint operator $A_{\chi,+}^{\sharp}(\sigma)$. 
Given a positive constant $k$, we define the counting function $\mathcal{N}(k)$ by
\begin{equation*}
 \mathcal{N}(k):=\sum_{\substack{\lambda\in\spec(A_{\chi,+}^{\sharp}(\sigma))\\{|\lambda|\leq k}}} m(\lambda).
\end{equation*}
In \cite{M1}, the generalization of  Weyl's law for the non-self-adjoint case is proved.
By \cite[Lemma 2.2]{M1}, we have
\begin{equation}
 \mathcal{N}(k)=\frac{\rank(E(\sigma)\otimes E_{\chi})\Vol(X)}{(4\pi)^{d/2}\Gamma(d/2+1)}k^{d/2}+o(k^{d/2}), \quad k\rightarrow \infty,
\end{equation}
where $\rank(E(\sigma)\otimes E_{\chi})$ denotes the rank of the product vector bundle $E(\sigma)\otimes E_{\chi}$.
Let  $a>0$ be the slope of the boundary of the cone, in which all the eigenvalues $\lambda_{j}$ of $A^{\sharp}_{\chi,+}(\sigma)$ are contained. We have
\begin{equation*}
 \sharp\{j\colon|\text{Re}(\lambda_{j})|\leq \lambda\}\leq \sharp\{j\colon|\lambda_{j}|\leq \sqrt{1+a^{2}}
 \lambda\}\leq \mathcal{N}(\sqrt{1+a^{2}}\lambda).
\end{equation*}
By (7.6), similarly to (4.21), we get
\begin{align}
\sum_{j=N+1}^{\infty}m(\lambda_{j})e^{-\text{Re}(\lambda_{j})/2}&
<\infty.
\end{align}
where $C_{1}$ is a positive constant.

Hence, by (7.4), (7.5), (7.7) and the definition of $\theta_{1}(t)$, we have that given a positive number $c>0$, there exist a positive integer $N$ and a $K>0$ such that
\begin{subequations}
\begin{equation}
 \bigg|\theta_{1}(t)-\sum_{j=1}^{N}m(\lambda_{j})e^{-t\lambda_{j}}\bigg|\leq Ke^{-ct},\quad t\geq 1.
\end{equation}
Furthermore, by the asymptotic expansion of the trace of the operator $e^{-tA_{\chi,+}^{\sharp}(\sigma)}$ (7.1), we have that for every positive integer $N$,
\begin{equation}
 \theta_{1}(t)-\sum_{j=0}^{N}a_{j}t^{\frac{j-d}{2}}=O(t^{\frac{N+1-d}{2}}), \quad t\rightarrow 0.
\end{equation}
\end{subequations}
By (7.8a) and (7.8b), $\theta_{1}(t)$ satisfies the assumptions as in \cite[$\AS$ 1, $\AS$ 2, p. 16]{JL}.
Hence, we can apply \cite[Theorem 1.5]{JL} for $p=\frac{j-d}{2}$
and obtain the meromorphic continuation of $\xi_{1}(z,s;\sigma)$ to $z\in\C$.
The simple poles are located at $k_{r}=-\big(\frac{r-d}{2}\big)$, where $r\in\N_{0}$.
\end{proof}

Let now $\lambda_{j}\in \spec(A_{\chi,-}^{\sharp}(\sigma))$ and $L_{j}^{-}$ be the horizontal half line going from $-\infty$ to $-\lambda_{j}$. We define $U_{L}^{-}$ to be
the complement of all the half lines $L_{j}^{-}$ in $\C$. Then, 
similarly, for $\xi_{2}(z,s;\sigma)$, we have  the following lemma.
\begin{lem}
For  $s\in U_{L}^{+}$,  $\xi_{2}(z,s;\sigma)$ admits a meromorphic continuation to $z\in\C$. Furthermore, it has simple poles at $k_{j,n}=-(\frac{j-d}{2}+n)$, where $n\in \N_{0} $.
\end{lem}

\begin{thm}
For  $s\in U_{L}^{+}$,  $\xi_{1}(z,s;\sigma)$
is holomorphic at $z=0$.
\end{thm}
\begin{proof}
By \cite[Theorem 1.6]{JL}, $\xi_{1}(z,s;\sigma)$ has Laurent expansion at $z=0$
\begin{equation*}
\xi_{1}(z,s;\sigma)=R_{-1}(0,s;\sigma)z^{-1}+R_{0}(0,s;\sigma)+R_{1}(0,s;\sigma)z+\ldots,
\end{equation*}
where $R_{-1}(0,s;\sigma)$ is a polynomial of degree $\leq d/2$.
In our case the pole at 
$z=0$ is simple since the number $n(0')$ is zero. For justification, see the definition
of $n(0')$ in \cite[p. 17]{JL} and notice that the asymptotic expansion (7.1) corresponds to the so called special case, i.e., the coefficients in (7.1) are constants and there are no logarithmic terms (see \cite[p. 15--16]{JL}).
By the proof of Theorem 1.6 in \cite{JL}, the polynomial $R_{-1}(0,s;\sigma)$,
can be explicitly given
\begin{equation}
R_{-1}(0,s;\sigma)=\sum_{j/2-d/2+k=0}(-1)^{k}\frac{s^{k}}{k!}a_{j}^{+}, 
\end{equation}
where $a_{j}^{+}$ are the coefficients in the short time asymptotic expansion (7.1) of the heat operator, with $j/2-d/2<0$.
By \cite[Lemma 1.7.4 (d)]{Gilk}, the coefficients $a_{j}^{+}$ vanish for $j$ odd.
Hence, by (7.9) and the fact that $d=\dim X$ is odd, 
$R_{-1}(0,s;\sigma)=0$. The assertion follows.
\end{proof}
Similarly, for $\xi_{2}(z,s;\sigma)$, we have the following theorem.
\begin{thm}
For  $s\in U_{L}^{-}$,  $\xi_{2}(z,s;\sigma)$ is holomorphic at $z=0$.
\end{thm}
We set $U_{L}:=U_{L,+}\cup U_{L,-}$.
\begin{coro}
For $s\in U_{L}$, the generalized zeta function  $\zeta(z,s;\sigma)$ is holomorphic at $z=0$.
In addition, 
\begin{equation}
\frac{d}{dz} \zeta(z,s;\sigma)\bigg|_{z=0}= \xi(0,s;\sigma).
\end{equation}
\end{coro}
\begin{proof}
The generalized zeta function is by definition the xi function divided by $\Gamma(z)$.
We use
\begin{equation*}
\frac{1}{\Gamma(z)}=z+\gamma z^{2}+O(z^3),
\end{equation*}
where $\gamma$ is the Euler constant.
We have
\begin{align*}
\frac{d}{dz} \zeta(z,s;\sigma)\bigg|_{z=0}&=\frac{d}{dz}\bigg(\frac{1}{\Gamma(z)}\xi_{1}(z,s;\sigma)- \frac{1}{\Gamma(z)}\xi_{1}(z,s;\sigma)\bigg)\bigg|_{z=0}\\
&=\xi_{1}(0,s;\sigma)-\xi_{2}(0,s;\sigma)\\
&=\xi(0,s;\sigma)
\end{align*}
\end{proof}

\begin{defi}
The graded regularized determinant of the operator $A_{\chi}^{\sharp}(\sigma)+s^{2}$ is defined by
\begin{equation}
{\det}_{\gr}(A_{\chi}^{\sharp}(\sigma)+s^{2}):=\exp\bigg(-\frac{d}{dz}\zeta(z,s^{2};\sigma)\bigg|_{z=0}\bigg).
\end{equation}
\end{defi}
By (7.10), we get
\begin{equation*}
{\det}_{\gr}(A_{\chi}^{\sharp}(\sigma)+s^{2})=\exp(-\xi(0,s^{2};\sigma)).
\end{equation*}
Equivalently,
\begin{equation}
\log({\det}_{\gr}(A_{\chi}^{\sharp}(\sigma)+s^{2}))=-\xi(0,s^{2};\sigma)
=-(\xi_{1}(0,s^{2};\sigma)-\xi_{2}(0,s^{2};\sigma)).
\end{equation}

\begin{thm}
Let  ${\det}_{\gr}(A_{\chi}^{\sharp}(\sigma)+s^{2})$ be the regularized determinant associated with the operator $A_{\chi}^{\sharp}(\sigma)+s^{2}$.
Then,
\begin{enumerate}
\item  $\bf {case (a)}$ the Selberg zeta function has the representation
\begin{equation}
Z(s;\sigma,\chi)={\det}_{\gr}(A_{\chi}^{\sharp}(\sigma)+s^{2})\exp\bigg(-2\pi \dim(V_{\chi})\Vol(X)\int_{0}^{s}P_{\sigma}(t)dt\bigg).
\end{equation}
\item  $\bf {case (b)}$ the symmetrized zeta function has the representation
\begin{equation}
S(s;\sigma,\chi)={\det}_{\gr}(A_{\chi}^{\sharp}(\sigma)+s^{2})\exp\bigg(-4\pi \dim(V_{\chi})\Vol(X)\int_{0}^{s}P_{\sigma}(t)dt\bigg).
\end{equation}
\end{enumerate}
\end{thm}
\begin{proof}
By \cite[Lemma 7.1]{Spilioti2018}, \cite[eq. (7.9)]{Spilioti2018} and arguing as in the proof of Proposition 7.7 in
\cite[p. 187-188]{Spilioti2018}, we obtain 
\begin{align*}
  \Tr_{s} \prod_{i=1}^{N}(A_{\chi}^{\sharp}(\sigma)+s_{i}^{2})^{-1}=&\int_{0}^{\infty}\sum_{i=1}^{N}\bigg(\prod_{\substack{j=1\\ j\neq i}}^{N}\frac{1}{s_{j}^{2}-s_{i}^{2}}\bigg)
e^{-ts_{i}^{2}}\Tr_{s}(e^{-t{A_{\chi}^{\sharp}(\sigma)}})dt.\\
\end{align*}
By \cite[Lemma 7.3]{Spilioti2018} and (7.1), the integral 
in the right-hand side of the equation above is absolutely convergent.
We have
\begin{align}
\int_{0}^{\infty}\sum_{i=1}^{N}&\notag\bigg(\prod_{\substack{j=1\\ j\neq i}}^{N}\frac{1}{s_{j}^{2}-s_{i}^{2}}\bigg)e^{-ts_{i}^{2}}\Tr_{s}(e^{-tA_{\chi}^{\sharp}(\sigma)})dt\\\notag
&=\int_{0}^{\infty}\sum_{i=1}^{N}\bigg(\prod_{\substack{j=1\\ j\neq i}}^{N}\frac{1}{s_{j}^{2}-s_{i}^{2}}\bigg)
\frac{1}{2s_{i}t}\bigg(-\frac{d}{ds_{i}}e^{-ts_{i}^{2}}\bigg)\Tr_{s}(e^{-tA_{\chi}^{\sharp}(\sigma)})dt.\\
\end{align}
The right-hand side of (7.15) gives
\begin{align*}
&\int_{0}^{\infty}\sum_{i=1}^{N}\bigg(\prod_{\substack{j=1\\ j\neq i}}^{N}\frac{1}{s_{j}^{2}-s_{i}^{2}}\bigg)
\frac{1}{2s_{i}t}\bigg(-\frac{d}{ds_{i}}e^{-ts_{i}^{2}}\bigg)\T_{s}r(e^{-tA_{\chi}^{\sharp}(\sigma)})dt\\
&=\lim_{z\rightarrow 0}\int_{0}^{\infty}\sum_{i=1}^{N}\bigg(\prod_{\substack{j=1\\ j\neq i}}^{N}\frac{1}{s_{j}^{2}-s_{i}^{2}}\bigg)
\frac{1}{2s_{i}}\bigg(-\frac{d}{ds_{i}}e^{-ts_{i}^{2}}\bigg)t^{z-1}\Tr_{s}(e^{-tA_{\chi}^{\sharp}(\sigma)})dt\\
&=\lim_{z\rightarrow 0}\sum_{i=1}^{N}\bigg(\prod_{\substack{j=1\\ j\neq i}}^{N}\frac{1}{s_{j}^{2}-s_{i}^{2}}\bigg)\frac{1}{2s_{i}}\frac{d}{ds_{i}}\big(-\int_{0}^{\infty}e^{-ts_{i}^{2}}t^{z-1}\Tr_{s}(e^{-tA_{\chi}^{\sharp}(\sigma)})dt\big)\\
&=\sum_{i=1}^{N}\bigg(\prod_{\substack{j=1\\ j\neq i}}^{N}\frac{1}{s_{j}^{2}-s_{i}^{2}}\bigg)\frac{1}{2s_{i}}\frac{d}{ds_{i}}\big(-\xi(0,s_{i}^{2};\sigma)\big)\\
&=\sum_{i=1}^{N}\bigg(\prod_{\substack{j=1\\ j\neq i}}^{N}\frac{1}{s_{j}^{2}-s_{i}^{2}}\bigg)\frac{1}{2s_{i}}\frac{d}{ds_{i}}\big(\log({\det}_{\gr}(A_{\chi}^{\sharp}(\sigma)+s_{i}^{2}))\big),
\end{align*}
where in the last equation we used (7.12).
Therefore, (7.15) becomes
\begin{align}
\int_{0}^{\infty}\sum_{i=1}^{N}&\notag\bigg(\prod_{\substack{j=1\\ j\neq i}}^{N}\frac{1}{s_{j}^{2}-s_{i}^{2}}\bigg)e^{-ts_{i}^{2}}\Tr_{s}(e^{-tA_{\chi}^{\sharp}(\sigma)})dt\\
&= \sum_{i=1}^{N}\bigg(\prod_{\substack{j=1\\ j\neq i}}^{N}\frac{1}{s_{j}^{2}-s_{i}^{2}}\bigg)\frac{1}{2s_{i}}\frac{d}{ds_{i}}\log({\det}_{\gr}(A_{\chi}^{\sharp}(\sigma)+s_{i}^{2})).
\end{align}
We treat here the case (b). One can proceed similarly for the case (a).
The left-hand side of (7.16) can be developed more. Recall that $L(\gamma;\sigma,\chi)$ is given by (3.5).
We insert the right-hand side of the trace formula \cite[eq. (5.39), Theorem 5.5]{Spilioti2018} for the operator $e^{-tA_{\chi}^{\sharp}(\sigma)}$. As before, we consider the super trace of the operator $e^{-tA_{\chi}^{\sharp}(\sigma)}$.
\begin{align*}
\Tr_{s}(e^{-tA_{\chi}^{\sharp}(\sigma)})=&2\dim(V_{\chi})\Vol(X)\int_{\R}e^{-t\lambda^{2}}P_{\sigma}(i\lambda)d\lambda\\
&+\sum_{[\gamma]\neq e}\frac{l(\gamma)}{n_{\Gamma}(\gamma)}L(\gamma;\sigma+w\sigma,\chi)
\frac{e^{-l(\gamma)^{{2}}/4t}}{(4\pi t)^{1/2}}.
\end{align*}
Then, the left-hand side of (7.16) becomes
\begin{align*}
\int_{0}^{\infty}\sum_{i=1}^{N}\bigg(\prod_{\substack{j=1\\ j\neq i}}^{N}\frac{1}{s_{j}^{2}-s_{i}^{2}}\bigg)&e^{-ts_{i}^{2}}\Tr_{s}(e^{-tA_{\chi}^{\sharp}(\sigma)})dt
=\int_{0}^{\infty}\sum_{i=1}^{N}\bigg(\prod_{\substack{j=1\\ j\neq i}}^{N}\frac{1}{s_{j}^{2}-s_{i}^{2}}\bigg)\\
&e^{-ts_{i}^{2}}\bigg(2\dim(V_{\chi})\Vol(X)\int_{\R}e^{-t\lambda^{2}}P_{\sigma}(i\lambda)d\lambda\\
&+\sum_{[\gamma]\neq e} \frac{l(\gamma)}{n_{\Gamma}(\gamma)}L(\gamma;\sigma+w\sigma,\chi)
\frac{e^{-l(\gamma)^{{2}}/4t}}{(4\pi t)^{1/2}}\bigg)dt.
\end{align*}
By \cite[Lemma 7.4]{Spilioti2018}, we can interchange the order of integration for the double integral
\begin{equation*}
\int_{0}^{\infty}\int_{\R}\sum_{i=1}^{N}\bigg(\prod_{\substack{j=1\\ j\neq i}}^{N}\frac{1}{s_{j}^{2}-s_{i}^{2}}\bigg)e^{-ts_{i}^{2}}e^{-t\lambda^{2}}P_{\sigma}(i\lambda)d\lambda.
\end{equation*}
We use the Cauchy integral formula to calculate this integral.
For the calculation of the integral that corresponds to the hyperbolic contribution, we use the identity 
\begin{equation*}
\int_{0}^{\infty}e^{-ts^{2}} \frac{e^{-l(\gamma)^{{2}}/4t}}{(4\pi t)^{1/2}}dt=\frac{1}{2s}e^{-sl(\gamma)}
\end{equation*}
(\cite[eq. (27), p. 146]{Er}).
Hence,
\begin{align*}
\int_{0}^{\infty}\sum_{i=1}^{N}\bigg(\prod_{\substack{j=1\\ j\neq i}}^{N}\frac{1}{s_{j}^{2}-s_{i}^{2}}\bigg)&e^{-ts_{i}^{2}}\bigg(2\dim(V_{\chi})\Vol(X)\int_{\R}e^{-t\lambda^{2}}P_{\sigma}(i\lambda)d\lambda\\
&+\sum_{[\gamma]\neq e} \frac{l(\gamma)}{n_{\Gamma}(\gamma)}L(\gamma;\sigma+w\sigma,\chi)
\frac{e^{-l(\gamma)^{{2}}/4t}}{(4\pi t)^{1/2}}\bigg)dt\\
&=\sum_{i=1}^{N}\bigg(\prod_{\substack{j=1\\ j\neq i}}^{N}\frac{1}{s_{j}^{2}-s_{i}^{2}}\bigg)\frac{2\pi}{s_{i}} \dim(V_{\chi})\Vol(X)P_{\sigma}(s_{i})\\
&+\sum_{i=1}^{N}\bigg(\prod_{\substack{j=1\\ j\neq i}}^{N}\frac{1}{s_{j}^{2}-s_{i}^{2}}\bigg)\frac{1}{2s_{i}}\sum_{[\gamma]\neq e} \frac{l(\gamma)}
{n_{\Gamma}(\gamma)}
L(\gamma;\sigma+w\sigma,\chi)e^{-s_{i}l(\gamma)}.
\end{align*}
Hence, by (7.16), we get
\begin{align}
\sum_{i=1}^{N}\bigg(\prod_{\substack{j=1\\ j\neq i}}^{N}\frac{1}{s_{j}^{2}-s_{i}^{2}}\bigg)&\frac{1}{2s_{i}}\frac{d}{ds_{i}}\notag\log({\det}_{\gr}(A_{\chi}^{\sharp}(\sigma)+s_{i}^{2}))\\\notag
&=\sum_{i=1}^{N}\bigg(\prod_{\substack{j=1\\ j\neq i}}^{N}\frac{1}{s_{j}^{2}-s_{i}^{2}}\bigg)\frac{2\pi}{s_{i}}\dim(V_{\chi})\Vol(X)P_{\sigma}(s_{i})\\
&+\sum_{i=1}^{N}\bigg(\prod_{\substack{j=1\\ j\neq i}}^{N}\frac{1}{s_{j}^{2}-s_{i}^{2}}\bigg)\frac{1}{2s_{i}}
\sum_{[\gamma]\neq e} \frac{l(\gamma)}{n_{\Gamma}(\gamma)}L(\gamma;\sigma+w\sigma,\chi)
e^{-s_{i}l(\gamma)}.
\end{align}
We fix now the variables $s_{2},\ldots, s_{N}\in\C$ and let the variable $s_{1}=s\in\C$ vary.
We multiply both sides of (7.17) by 
\begin{equation*}
2s\prod_{j=2}^{N}s_{j}^{2}-s^{2}.
\end{equation*}
Then, we get
\begin{align}
\frac{d}{ds}\log({\det}_{\gr}(A_{\chi}^{\sharp}(\sigma)+s^{2}))=\notag&4\pi\dim(V_{\chi})\Vol(X)P_{\sigma}(s)\\\notag
&+\sum_{[\gamma]\neq e} \frac{l(\gamma)}{n_{\Gamma}(\gamma)}L(\gamma;\sigma+w\sigma,\chi)e^{-sl(\gamma)}\\
&+K'(s),
\end{align}
where $K'(s)$ is a certain odd  polynomial, which is of the from
\begin{equation*}
K'(s)=2s\prod_{j=2}^{N}(s_{j}^{2}-s^{2})Q(s_{2},\ldots,s_{N}).
\end{equation*}
The term $Q(s_{2},\ldots,s_{N})$ comes from the terms that correspond to the summands over $i=2,\ldots,N$ and hence it has a fixed value in $\C$, since $s_{2},\ldots,s_{N}$ are fixed.

Next, we can substitute the term in (7.18), that comes from the hyperbolic contribution of the trace formula, with the logarithmic derivative of the symmetrized zeta function. 
By (3.6), we have
\begin{align*}
\frac{d}{ds}\log({\det}_{\gr}(A_{\chi}^{\sharp}(\sigma)+s^{2}))=&4\pi\dim(V_{\chi})\Vol(X)P_{\sigma}(s)\\\notag
&+L_{S}(s)+K'(s).
\end{align*}
We integrate with respect to $s$ and get
\begin{align*}
\log({\det}_{\gr}(A_{\chi}^{\sharp}(\sigma)+s^{2}))=&4\pi\dim(V_{\chi})\Vol(X)\int_{0}^{s}P_{\sigma}(t)dt\\\notag
&+ \log S(s;\sigma,\chi)+K(s).
\end{align*}
Hence,
\begin{align}
\log S(s;\sigma,\chi)=\notag&\log({\det}_{\gr}(A_{\chi}^{\sharp}(\sigma)+s^{2}))-K(s)\\
& -4\pi\dim(V_{\chi})\Vol(X)\int_{0}^{s}P_{\sigma}(t)dt.
\end{align}
We want to show that $K(s)=0$.
To this end, we study the asymptotic behaviour of all terms in (7.19), as $s\rightarrow \infty$.
By Lemma 4.2, there exist coefficients $c_{j}$ such that 
\begin{equation*}
 \Tr_{s}(e^{-tA^{\sharp}_{\chi}(\sigma)})\sim_{t\rightarrow 0}\sum_{j=0}^{\infty}c_{j}t^{\frac{j-d}{2}}.
\end{equation*}
We use the asymptotic formula (13) from \cite{Quine}. 
By \cite[Lemma 1.7.4 (d)]{Gilk}, $c_{j}$ vanish for $j$ odd.
Hence, in our case, since $d$ is odd, the first sum over the integers in the 
right-hand side of the asymptotic formula 
(13) in \cite{Quine} does not contribute.
We have
\begin{align}
-\log{\det}_{\gr}(A_{\chi}^{\sharp}(\sigma)+s^{2})\sim_{s\rightarrow\infty}\sum_{k=0}^{\infty}c_{2k}\Gamma(2k)s^{d-2k}.
\end{align}
The estimation of the sum on the right hand side of (3.14) on p. 163 of \cite{Spilioti2018} can be improved to show that the $\log$ of the Selberg zeta function is exponentially decreasing. Therefore the $\log$ of the symmetrized zeta function also decreases exponentially.
Hence, as $s\rightarrow\infty$, the left-hand side of 
(7.19) goes to zero. On the other hand,
as $s\rightarrow\infty$, the right-hand side of (7.20) contains only odd powers of $s$,
the term that involves the Plancherel polynomial in the left hand side of (7.19)
is an odd polynomial, and $K(s)$ is an even polynomial. 
Hence, $K(s)$ vanish identically.
Moreover, we conclude
\begin{equation*}
 -4\pi\dim(V_{\chi})\Vol(X)\int_{0}^{s}P_{\sigma}(t)dt=\sum_{k=0}^{(d-1)/2}c_{2k}\Gamma(2k)s^{d-2k}
\end{equation*}
and $c_{2k}=0$ for $2k>d$.
\end{proof}
We prove now a determinant formula for the Ruelle zeta function.
Recall form Section 6 the standard representation $\sigma_{p}$ of $M$ in $\Lambda^{p}\R^{d-1}\otimes\C$.
Let $\sigma,\sigma'\in\widehat{M}$. We denote by $[\sigma_{p}\otimes\sigma:\sigma']$
the multiplicity of $\sigma'$ in $\sigma_{p}\otimes\sigma$.
We distinguish again two cases for $\sigma'\in\widehat{M}$. 
\begin{itemize}
 \item  {\bf case (a)}: \textit{$\sigma'$ is invariant under the action of the restricted Weyl group $W_{A}$.}
Then, $i^{*}(\tau)=\sigma'$, where $\tau\in R(K)$.
 \item {\bf case (b)}: \textit{$\sigma'$ is not invariant under the action of the restricted Weyl group $W_{A}$.}
Then, $i^{*}(\tau)=\sigma'+w\sigma'$, where $\tau\in R(K)$.
\end{itemize}
We define the operator
We define the operator
\begin{equation*}
 A_{\chi}^{\sharp}(\sigma_{p}\otimes\sigma):=\bigoplus_{[\sigma']\in\widehat{M}/W_{A}}\bigoplus_{i=1}^{[\sigma_{p}\otimes\sigma:\sigma']}A^{\sharp}_{\chi}(\sigma'),
\end{equation*}
acting on the space $C^{\infty}(X,E(\sigma')\otimes E_{\chi})$, where $E(\sigma')$ is the vector bundle 
over $X$, constructed as in \cite[p. 175]{Spilioti2018}.

Let $\alpha$ be the unique positive root of $(\mathfrak{g},\mathfrak{a})$. Let $H\in\mathfrak{a}$ such that 
$\alpha(H)=1$. Recall that the character $\lambda\equiv\lambda_{p}$ of $A$ is defined by $\lambda\equiv\lambda_{p}(a)=e^{p\alpha(\log a)}$. Then,
we can identify $\lambda$ with $p$.
\begin{prop}
The Ruelle zeta function has the representation 
\begin{itemize}
 \item  {\bf case (a)}
\begin{align}
R(s;\sigma,\chi)=\notag&\prod_{p=0}^{d-1}{{\det}_{\gr}(A^{\sharp}_{\chi}(\sigma_{p}\otimes\sigma)+(s+\vert\rho\vert-p)^{2})}^{(-1)^{p}}\\
&\exp\bigg((-1)^{\frac{d-1}{2}+1}\pi(d+1)\dim(V_{\sigma})\dim (V_{\chi})\frac{\Vol(X)}{\Vol(S^{d})}s\bigg).
\end{align}
 \item {\bf case (b)}
\begin{align}
 R(s;\sigma,\chi)R(s;w\sigma,\chi)=\notag&\prod_{p=0}^{d-1}{{\det}_{\gr}(A^{\sharp}_{\chi}(\sigma_{p}\otimes\sigma)+(s+\vert\rho\vert-p)^{2})}^{(-1)^{p}}\\
&\exp\bigg((-1)^{\frac{d-1}{2}+1}2\pi(d+1)\dim(V_{\sigma})\dim (V_{\chi})\frac{\Vol(X)}{\Vol(S^{d})}s\bigg).
\end{align}
\end{itemize}
\end{prop}
\begin{proof}
We prove the assertion for case (b). One can proceed similarly for case (a).
By \cite[Theorem 7.12]{Spilioti2018}, we have 
the expression of the Ruelle zeta function as a product of 
Selberg zeta functions. Then, we see
\begin{align*}
 R(s;\sigma,\chi) R(s;w\sigma,\chi)&=\prod_{p=0}^{d-1}Z(s+\vert\rho\vert-p;\sigma_{p}\otimes\sigma,\chi)^{(-1)^{p}}\prod_{p=0}^{d-1}Z(s+\vert\rho\vert-p;\sigma_{p}\otimes w\sigma,\chi)^{(-1)^{p}}\\
&=\prod_{p=0}^{d-1}S(s+\vert\rho\vert-p;\sigma_{p}\otimes\sigma,\chi)^{(-1)^{p}}.
\end{align*}
Hence, if we equip the determinant formula for the symmetrized zeta function (Theorem 7.8.\textit{2}), we have
\begin{align}
  R(s;\sigma,\chi) R(s;w\sigma,\chi)\notag&=\prod_{p=0}^{d-1}{{\det}_{\gr}(A^{\sharp}_{\chi}(\sigma_{p}\otimes\sigma)+(s+\vert\rho\vert-p)^{2})}^{(-1)^{p}}\\
&\exp\bigg(\sum_{p=0}^{d-1}(-1)^{p}(-4\pi \dim(V_{\chi})
\Vol(X))\int_{0}^{s+\vert\rho\vert-p}P_{\sigma_{p}\otimes\sigma}(t)dt\bigg).
\end{align}
On the other hand,
\begin{equation*}
\sum_{p=0}^{d-1}(-1)^{p} \int_{0}^{s+\vert\rho\vert-p}Q_{\sigma_{p}\otimes\sigma}(t)dt=\int_{0}^{s}f(t)dt,
\end{equation*}
where $f(t)$ is defined by (6.2).
Therefore, as in the proof of Theorem 6.2, 
\begin{equation}
 \sum_{p=0}^{d-1}(-1)^{p}\int_{0}^{s+\vert\rho\vert-p}Q_{\sigma_{p}\otimes\sigma}(t)dt=(d+1)\dim(V_{\sigma})s.
\end{equation}
Hence, (7.23) becomes by (7.24)
\begin{align*}
 R(s;\sigma,\chi) R(s;w\sigma,\chi)&=\prod_{p=0}^{d-1}{{\det}_{\gr}(A^{\sharp}_{\chi}(\sigma_{p}\otimes\sigma)+(s+\vert\rho\vert-p)^{2})}^{(-1)^{p}}\\
&\exp\bigg(4\pi c(n)(d+1)\dim(V_{\chi})\dim(V_{\sigma})\Vol(X)s\bigg),
\end{align*}
where $c(n)$ is as in (2.9).
The assertion follows.
\end{proof}

\paragraph{Acknowledgements}
The author is grateful to her supervisor Werner M\"{u}ller for his guidance and suggestions,
as this article is based on the second part of her PhD thesis.

\bibliographystyle{amsalpha}
\bibliography{ref}
\contact

\end{document}